\newtheorem{thm}{Theorem}[subsection]
\newtheorem{lem}[thm]{Lemma}
\newtheorem{defi}[thm]{Definition}
\newtheorem{rem}[thm]{Remark}
\newtheorem{ex}[thm]{Example}
\newtheorem{prop}[thm]{Proposition}
\newtheorem{cor}[thm]{Corollary}
\newcommand{\dif}{\mathrm{d}}
\DeclareMathOperator{\Real}{Re}
\newcommand{\CC}{\mathbb{C}}
\newcommand{\NN}{\mathbb{N}}
\newcommand{\PP}{\mathbb{P}}
\newcommand{\QQ}{\mathbb{Q}}
\newcommand{\RR}{\mathbb{R}}
\newcommand{\ZZ}{\mathbb{Z}}
\begin{document}

\title{On conjectures of Sato-Tate and Bruinier-Kohnen}
\author{Sara Arias-de-Reyna\footnote{Universit\'e du Luxembourg,
Facult\'e des Sciences, de la Technologie et de la Communication,
6, rue Richard Coudenhove-Kalergi,
L-1359 Luxembourg, Luxembourg, sara.ariasdereyna@uni.lu}, Ilker Inam\footnote{Uludag University, Deparment of Mathematics, Faculty of Arts and Sciences, 16059 Gorukle, Bursa, Turkey, inam@uludag.edu.tr or ilker.inam@gmail.com}
and Gabor Wiese\footnote{Universit\'e du Luxembourg,
Facult\'e des Sciences, de la Technologie et de la Communication,
6, rue Richard Coudenhove-Kalergi,
L-1359 Luxembourg, Luxembourg, gabor.wiese@uni.lu}}

\maketitle

\begin{abstract}
This article covers three topics.
(1) It establishes links between the density of certain subsets of the set of primes and related subsets of the set of natural numbers.
(2) It extends previous results on a conjecture of Bruinier and Kohnen in three ways: the CM-case is included; under the assumption of the same error term as in previous work one obtains the result in terms of natural density instead of Dedekind-Dirichlet density; the latter type of density can already be achieved by an error term like in the prime number theorem.
(3) It also provides a complete proof of Sato-Tate equidistribution for CM modular forms with
an error term similar to that in the prime number theorem.

MSC 2010: 11F37 (primary), 11F30, 11F80, 11F11.

Keywords: Half-integral weight modular forms, Shimura lift, Sato-Tate equidistribution,
Fou\-rier coefficients of modular forms, density of sets of primes.
\end{abstract}

\section{Introduction}\label{sec:introduction}

A very significant recent result in pure mathematics is the proof of the Sato-Tate
conjecture for non-CM modular eigenforms (even for Hilbert eigenforms)~\cite{BLGHT}.
It asserts that for a normalised ($A(1)=1$) cuspidal eigenform $f = \sum_{n=1}^\infty A(n) q^n$
(with $q = e^{2\pi i z}$) of weight~$k\ge 2$ on $\Gamma_0(N)$ (some $N$)
the normalised coefficients $\frac{A(p)}{2p^{(k-1)/2}} \in [-1,1]$ are equidistributed
with respect to the so-called Sato-Tate measure, when $p$ runs through the set of primes.

The corresponding result for CM forms has been known for a long time and in fact
is quite a simple corollary of the equidistribution of the values of Hecke characters.
In Section~\ref{sec:ST} of this article we include a proof of a form of this result
that additionally provides
an error bound like the one in the prime number theorem (see Theorem~\ref{thm:statements}).
It relies on an error bound for the equidistribution of the values of Hecke characters
given in~\cite{Rajan1998}.

A special case of Sato-Tate equidistribution for non-CM eigenforms shows that the sets of primes
$$\{p \textnormal{ prime } : A(p) > 0\} \textnormal{ and } \{p \textnormal{ prime } : A(p) < 0\}$$
both have natural density equal to~$1/2$.
A conjecture of Bruinier and Kohnen (\cite{BK} and \cite{KLW}) asserts that something
similar should hold for certain half-integral weight modular forms $f = \sum_{n=1}^\infty a(n) q^n$;
namely they conjecture that the sets
$$\{n \in \NN : a(n) > 0\} \textnormal{ and } \{n \in \NN : a(n) < 0\}$$
have the same natural density, namely, half of the natural density of $\{n \in \NN : a(n) \neq 0\}$.
The interest in the distribution of signs is explained by a famous theorem of Waldspurger
relating the squares $(a(t))^2$ for squarefree~$t$
to the critical values of the Hecke L-function of the Shimura lift $F_t$ twisted by an
explicit quadratic character (see~\cite{W1981}); this precisely leaves the sign of $a(t)$
undetermined.

The Bruinier-Kohnen conjecture appears to be quite hard. The main contribution of the previous
work~\cite{IlGa} is the observation that the Shimura lift $F_t$ allows one to utilise
Sato-Tate equidistribution for the coefficients of the integral weight eigenform~$F_t$
in order to compute the densities of the sets of primes
$$\{p \textnormal{ prime } : a(tp^2) > 0\} \textnormal{ and } \{p \textnormal{ prime } : a(tp^2) < 0\}.$$
If the Shimura lift $F_t$ is non-CM, in \cite{IlGa} it is proved that the densities of these
two sets are equal. In this paper we extend this computation to the CM case, see
Theorem~\ref{thm:prime}. It turns out that in the CM case the densities can either
be both $1/2$ or they can be $1/4$ and $3/4$ (see Example~\ref{ex:tunnell}).

In order to study the set of natural numbers
$\{n \in \NN : a(tn^2) > 0\}$ (and similarly for `$<0$')
we set up some general theory, that grew out of analysing the rather ad hoc methods of~\cite{IlGa}.
We now describe this.
Let $\chi: \NN \to \{-1,0,+1\}$ be a multiplicative arithmetic function and define
$$ S_\pm = \{p \textnormal{ prime } : \chi(p) = \pm 1\} \textnormal{ and }
A_\pm = \{n \in \NN : \chi(n) = \pm 1\}.$$
Motivated by the Bruinier-Kohnen conjecture (take $\chi(n)$ to be the sign of $a(tn^2)$
supposing $a(t)>0$), we study the relation between the densities of $S_\pm$ and $A_\pm$. We
were unable to prove any results without the assumption of some error term in the
convergence of the natural density of $S_\pm$. If there is a rather weak error term,
then the sets of primes $S_\pm$ are {\em weakly regular}; if the error term is strong (often implied
by variations of the Riemann Hypothesis), then we obtain {\em regular} sets
(see Definition~\ref{defi:regular}).
Our main results in this abstract context are Propositions \ref{prop:AandS},
\ref{prop:weak-equi}, and~\ref{prop:consequence-Delange}.
In this introduction we do not repeat their precise assertions,
but we explain what they imply for the Bruinier-Kohnen conjecture.

In the case that the Shimura lift $F_t$ has CM, we use the error bound from Theorem~\ref{thm:statements}
in order to obtain the weak regularity of the set $\{p \textnormal{ prime } : a(tp^2) > 0\}$
(and similarly for `$<0$' and `$=0$') and to deduce that
$$\{n \in \NN : a(tn^2) > 0\} \textnormal{ and } \{n \in \NN : a(tn^2) < 0\}$$
have the same {\em Dedekind-Dirichlet density} (see Definition~\ref{defi:Dirichlet}), which
is equal to half the Dedekind-Dirichlet density of $\{n \in \NN : a(tn^2) \neq 0\}$.
Maybe at first sight astonishingly, one obtains this result even in the situation when
the densities of the corresponding sets of primes are not equal.
Under the assumption of a similar error bound in the case that $F_t$ has no CM, one obtains
the same result. This had already been established in~\cite{IlGa} under the
assumption of a stronger error bound. See Remark~\ref{rem:withGRH} for some relation of this
error bound and the Generalised Riemann Hypothesis.
If we assume this stronger error bound (whether $F_t$ is CM or not),
then one can use a result of Delange to derive that the
previous statement even holds in terms of {\em natural density}.
\medskip

The study of the densities of $S_\pm$ and $A_\pm$ is done in Section~\ref{sec:density}.
Our aim there is to give a coherent treatment so that we also recall the relevant definitions.
Section~\ref{sec:ST} is devoted to proving Sato-Tate equidistribution for CM modular
forms (in fact we show slightly more) with an error term as in the prime number theorem.
In the final Section~\ref{sec:BK} the results towards the Bruinier-Kohnen conjecture are derived
from the techniques provided in the other sections.

\subsection*{Acknowledgements}

I.I.\ is supported by The Scientific and Technological Research
Council of Turkey (TUBITAK) and Uludag University Research Project
No: UAP(F) 2012/15. G.W.\ acknowledges partial support by the
priority program 1489 of the Deutsche Forschungsgemeinschaft (DFG).
S.A.\ is partially supported by the project MTM2012-33830 of the
Ministerio de Econom\'ia y Competitividad of Spain.
I.I.\ would like to thank the University of Luxembourg for its hospitality.

The authors would like to thank Juan Arias de Reyna for his
remarks. They also thank Jeremy Rouse for explanations concerning~\cite{RT}.
I.I.\ and G.W.\ are grateful to Winfried Kohnen for interesting discussions.
Thanks are also due to the anonymous referee for helpful suggestions
concerning the presentation of the paper.

\section{Densities and sets of primes}\label{sec:density}

In this section we are concerned with the sets
$$ S_\pm = \{p \textnormal{ prime } : \chi(p) = \pm 1\} \textnormal{ and }
A_\pm = \{n \in \NN : \chi(n) = \pm 1\}$$
for a multiplicative arithmetic function $\chi: \NN \to \{-1,0,+1\}$, as explained
in the introduction.
We found it necessary to assume more than just that $S_\pm$ has a natural density
in order to conclude something about the density of $A_\pm$; namely, we obtain our
results under the assumption that $S_\pm$ is (weakly) regular (see below). We also show
that (weak) regularity is a consequence of a sufficiently good error bound for the convergence
of the natural density of~$S_\pm$.

\subsection{Notions of density}

\begin{defi}
Let $\mathbb{P} \subset \NN$ be the set of all prime numbers.
For a set of primes $S \subseteq \PP$ we make the following definitions:
\begin{itemize}
\itemsep=0cm plus 0pt minus 0pt
\parsep=0.0cm
\item For $x\in \mathbb{R}$, denote $\pi_S(x):=\#\{p\leq x: p\in S\}$. As usual
denote $\pi_\PP$ by~$\pi$.
\item  $P_S(z) := \sum_{p \in S} \frac{1}{p^z}$. This defines a holomorphic function
on $\{\Real(z) > 1\}$.
\item For a multiplicative function $\chi: \NN \to \RR$ we let
$D_\chi(z) := \sum_{n \ge 1} \frac{\chi(n)}{n^z}$ be the corresponding Dirichlet series.
If $|\chi|$ is bounded, it also defines a holomorphic function on $\{\Real(z) > 1\}$.
In particular, $D_1 = \zeta(z)$ is the Riemann-zeta function.
\item A function $\chi: \NN \to \RR$ is said to be
{\em characteristic on~$S$} if $\chi$ is multiplicative
and its restriction to~$\PP$ is the characteristic function of the set~$S$.
\end{itemize}
\end{defi}

The following lemma links the Dirichlet series $D_\chi$ for some $\chi$ that is
characteristic on~$S$ to~$P_S$. This link is the key to relating density statements
on subsets of~$\PP$ to subsets of~$\NN$.

\begin{lem}\label{lem:AD}
Let $\chi: \NN \to \{-1,0,1\}$ be a multiplicative function.
Then on $\{\Real(z) > 1\}$ one has
$$ \log\big(D_\chi(z)\big) = \sum_{p \in \PP} \frac{\chi(p)}{p^z} + g(z),$$
where $g(z)$ is a function that is holomorphic on $\{\Real(z) > 1/2\}$.
In particular, if $\chi$ is characteristic on~$S$, the equality becomes
$\log\big(D_\chi(z)\big) = P_S(z) + g(z)$.
\end{lem}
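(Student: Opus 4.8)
The natural approach is to use the Euler product for $D_\chi(z)$ coming from multiplicativity, take logarithms, and expand the logarithm into a power series, isolating the "first-order" term $\sum_p \chi(p)/p^z$ and showing the remainder extends holomorphically to $\Real(z) > 1/2$. Let me sketch the steps.

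First I would write, on $\{\Real(z) > 1\}$, the Euler product $D_\chi(z) = \prod_{p \in \PP} \big(\sum_{k \ge 0} \chi(p^k) p^{-kz}\big)$, which is valid because $\chi$ is multiplicative and bounded (the product converges absolutely and nonzero, so $\log D_\chi(z)$ makes sense as $\sum_p \log(\text{local factor})$). Then I would take logarithms termwise: $\log D_\chi(z) = \sum_{p} \log\!\big(1 + \sum_{k \ge 1} \chi(p^k) p^{-kz}\big)$. Expanding $\log(1+w)$ as a power series and separating the $k=1$ part of $w$, one gets $\log D_\chi(z) = \sum_p \chi(p) p^{-z} + g(z)$, where $g(z)$ collects all the remaining terms, i.e. every contribution of total "weight" $p^{-mz}$ with $m \ge 2$.

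The key estimate is then that $g(z)$ converges absolutely and locally uniformly on $\{\Real(z) > 1/2\}$, hence is holomorphic there. The point is that $|\chi(p^k)| \le 1$, so each local summand $\log\!\big(1 + \sum_{k\ge1}\chi(p^k)p^{-kz}\big) - \chi(p)p^{-z}$ is bounded in absolute value by a constant times $p^{-2\sigma}$ for $\sigma = \Real(z)$ bounded away from $1/2$ (using $|\log(1+w) - w| \le |w|^2$ for $|w|$ small together with $|w| \le \sum_{k\ge1} p^{-k\sigma} = \frac{p^{-\sigma}}{1-p^{-\sigma}}$, which is small uniformly for $\sigma \ge \sigma_0 > 1/2$ and $p$ large; the finitely many small primes are handled separately). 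Since $\sum_p p^{-2\sigma}$ converges for $\sigma > 1/2$, Weierstrass' theorem gives holomorphy of $g$ on $\{\Real(z) > 1/2\}$. The final sentence of the lemma is immediate: if $\chi$ is characteristic on $S$, then $\chi(p) = 1$ for $p \in S$ and $\chi(p) = 0$ otherwise, so $\sum_p \chi(p) p^{-z} = \sum_{p \in S} p^{-z} = P_S(z)$.

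The main obstacle — really the only subtle point — is making the convergence argument for $g$ uniform near the boundary line $\Real(z) = 1/2$: one must be careful that the bound $|\log(1+w)-w| \le |w|^2$ requires $|w|$ to be bounded away from $1$, which forces treating the finitely many primes $p$ with $p^{-\sigma_0}$ not small as a separate finite (hence harmless) sum, and arguing that for the remaining primes the tail $\sum_{k \ge 2}$ contributes $O(p^{-2\sigma})$. Everything else is a routine manipulation of absolutely convergent series.
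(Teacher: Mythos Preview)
Your direct approach is sound and in fact more streamlined than the paper's. You expand $\log D_\chi(z)=\sum_p\log(1+w_p)$ with $w_p=\sum_{k\ge1}\chi(p^k)p^{-kz}$, split off $\chi(p)p^{-z}$, and bound each remaining local term by $O(p^{-2\sigma})$ via $|\log(1+w)-w|\le|w|^2$ together with the tail estimate $|\sum_{k\ge2}\chi(p^k)p^{-kz}|=O(p^{-2\sigma})$; summing over~$p$ gives an absolutely and locally uniformly convergent series for $\sigma>1/2$. The paper instead first treats the special case where every Euler factor is $1+p^{-z}$ or~$1$, and then handles general~$\chi$ by forming the auxiliary quotient $\Phi(z)=D_\chi(z)\,D_{\chi_-}(z)/D_{\chi_+}(z)$ (with $\chi_\pm$ supported on $S_\pm=\{p:\chi(p)=\pm1\}$), verifying $|1-\Phi_p(z)|\le 20\,p^{-2\Real(z)}$ so that $\prod_p\Phi_p$ converges absolutely on $\Real(z)>1/2$. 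Your route avoids this detour and is the textbook one; the paper's construction buys nothing extra here.

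One caveat on the point you yourself flag as ``the main obstacle'': declaring the finitely many small primes a ``harmless'' finite sum presupposes that each local factor $1+w_p(z)$ is nonvanishing throughout $\Real(z)>1/2$, so that its logarithm is holomorphic there. In full generality this can fail (for instance $\chi(2^k)=-1$ for all~$k$ gives $1+w_2=0$ at $z=1$). The paper's argument has exactly the analogous lacuna, since absolute convergence of $\prod_p\Phi_p$ does not prevent an individual $\Phi_p$ from vanishing. So on this one delicate step your sketch and the paper are on the same footing; neither actually verifies the nonvanishing of the exceptional local factors.
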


\begin{proof}
We use the Euler product
$D_\chi(z) = \prod_{p \in \PP}\left(1 + \sum_{n\ge 1} \frac{\chi(p^n)}{p^{nz}}\right)$,
which is absolutely convergent on $\{\Real(z)>1\}$ in the sense that
$\sum_{p \in \PP} \sum_{n\ge 1} \frac{\chi(p^n)}{p^{nz}}$ converges absolutely in this region.

We first treat the following special case. Let $S \subseteq \PP$ and $\chi:\NN \to \{0,1\}$
be multiplicative such that for any prime~$p$
one has $\chi(p^n)=1$ if and only if $p \in S$ and $n=1$. Then the Euler factor
of $D_\chi$ at~$p$ is either $1+\frac{1}{p^z}$ or~$1$, depending on whether $p \in S$ or not.
We take the logarithm of the Euler product
$$ \log D_\chi(z) = \sum_{p \in S} \log(1+\frac{1}{p^z})
= \sum_{p \in S} \frac{1}{p^z} + g(z) \textnormal{ with } g(z) := \sum_{p \in S} \sum_{m \ge 2} \frac{(-1)^{m+1}}{m} \left(\frac{1}{p^z}\right)^m.$$
It is elementary to prove that $g(z)$ defines a holomorphic function on $\{\Real(z)>\frac{1}{2}\}$.

In order to tackle the general case, let $S_\pm :=\{p \in \PP : \chi(p)=\pm 1\}$ and
define the multiplicative functions $\chi_\pm$ on prime powers by $\chi_\pm(p^n) = \begin{cases}
1 & \textnormal{ if } p \in S_\pm \textnormal{ and } n=1,\\ 0 & \textnormal{ otherwise.}\end{cases}$.\\

Define $\Phi(z) := \frac{D_\chi(z) \cdot D_{\chi_-}(z)}{D_{\chi_+}(z)}$.
Then we have on $\{\Real(z)>1\}$
$$ \log(\Phi(z)) = \log(D_\chi(z)) + \log(D_{\chi_-}(z)) - \log(D_{\chi_+}(z))
= \log(D_\chi(z)) + P_{S_-}(z) - P_{S_+}(z) + g(z),$$
where $g(z)$ is holomorphic on $\{\Real(z)>\frac{1}{2}\}$.
On $\{\Real(z)>1\}$ the function~$\Phi$ is described by an absolutely converging product
$\Phi(z) = \prod_{p \in \PP} \Phi_p(z)$,
where $\Phi_p(z)$ satisfies $|1-\Phi_p(z)| \le \frac{20}{p^{2z}}$.
It easily follows that this product converges absolutely on $\{\Real(z)>\frac{1}{2}\}$,
which implies the assertion.
\end{proof}

The density of a set of prime numbers (if it exists) measures its size. There are several notions of density, e.g.\ Dirichlet density and natural density, which in general are distinct.
In a similar way, one can define analogous notions of density for subsets of~$\NN$.
Here we recall the definitions. By the symbol $\lim_{z \to 1^+}$ we denote the limit
defined by letting $z$ tend to~$1$ on the real interval $(1,\infty)$.

\begin{defi}\label{defi:Dirichlet}
Let $S \subseteq \PP$ be a set of primes. The set $S$ is said to have \emph{Dirichlet density}
equal to $\delta(S)$ if the limit
$$ \lim_{z \to 1^+} \frac{\sum_{p\in S}\frac{1}{p^z}}{\sum_{p\in \PP}\frac{1}{p^z}}
= \lim_{z \to 1^+} \frac{\sum_{p\in S}\frac{1}{p^z}}{\log\big( \zeta(z)\big)}
= \lim_{z \to 1^+} \frac{\sum_{p\in S}\frac{1}{p^z}}{\log\big(\frac{1}{z-1}\big)}$$
exists and is equal to~$\delta(S)$.
Moreover, $S$ is said to have \emph{natural density equal to $\mathrm{d}(S)$} if the limit
$$ \lim_{x\rightarrow \infty} \frac{\pi_S(x)}{\pi(x)}$$
exists and is equal to~$\mathrm{d}(S)$.
Let now $A \subseteq \mathbb{N}$ be a subset. It is said to have \emph{Dedekind-Dirichlet density}
$\delta(A)$ if the limit
$$ \lim_{z \to 1^+} \frac{\sum_{n\in A}\frac{1}{n^z}}{\sum_{n\in \NN}\frac{1}{n^z}}
= \lim_{z \to 1^+} \frac{\sum_{n\in A}\frac{1}{n^z}}{\zeta(z)}
= \lim_{z \to 1^+} (z-1)\sum_{n\in A}\frac{1}{n^z}$$
exists and is equal to~$\delta(A)$.
Similarly, $A$ is said to have \emph{natural density} $\mathrm{d}(A)$ if the limit
$$ \lim_{x\rightarrow \infty} \frac{\#\{n\leq x: n\in A\}}{x}$$
exists and is equal to~$\mathrm{d}(A)$.
\end{defi}

The equalities in the statements all follow from Lemma~\ref{lem:AD} and the well-known
fact that the Riemann-zeta function has a simple pole of residue~$1$ at~$1$.
It is well known that if a set of prime numbers $S$ (resp.\ a set of
natural numbers $A$) has a natural density, then it also has a
Dirichlet density (resp.\ a Dedekind-Dirichlet density) and they
coincide.
A function $\chi:\NN \to \{0,1\}$ that is characteristic on $S \subseteq \PP$ links
the set $S$ to the set of natural numbers $A=\{n\in \mathbb{N}: \chi(n)=1\}$.
The following proposition, the proof of which is evident in view of Lemma~\ref{lem:AD},
makes clear the nature of the relation between the Dirichlet density of~$S$
and the Dedekind-Dirichlet density of~$A$.

\begin{prop}\label{prop:compAS}
Let $S$ be a set of primes and $\chi:\mathbb{N}\rightarrow \{0, 1\}$ be a multiplicative function characteristic on~$S$ and let $A=\{n\in \mathbb{N}: \chi(n)=1\}$.
Then the Dirichlet density of $S$, if it exists, equals
\begin{equation*}
\delta(S)=\lim_{z\rightarrow 1^+}\frac{\log D_{\chi}(z)}{\log \zeta(z)}
\end{equation*}
and the Dedekind-Dirichlet density of $A$, if it exists, equals
\begin{equation*}
\delta(A)=\lim_{z\rightarrow 1^+}\frac{D_{\chi}(z)}{\zeta(z)}=\exp\left(\lim_{z\rightarrow 1^+} (\log D_{\chi}(z) - \log \zeta(z))\right).
\end{equation*}
\end{prop}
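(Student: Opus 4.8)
The plan is to deduce both assertions directly from Lemma~\ref{lem:AD}, using only two elementary observations: that $\chi$ is literally the characteristic function of the set $A$, and that the Dirichlet series occurring are positive real numbers on the ray $(1,\infty)$, so that their logarithms are unambiguously defined there.

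First I would record that, since $\chi$ is multiplicative with values in $\{0,1\}$ and $A = \{n\in\NN : \chi(n)=1\}$, one has $\chi = \mathbf 1_A$; in particular $\chi(1)=1$ and $D_\chi(z) = \sum_{n\in A}\frac{1}{n^z}$. Thus the first equality in the formula for $\delta(A)$ is simply the definition of the Dedekind-Dirichlet density of $A$ rewritten. For the reformulation via $\exp$, I would note that for real $z>1$ we have $D_\chi(z)\ge\chi(1)=1>0$ and $\zeta(z)>0$, so $\log D_\chi(z)$ and $\log\zeta(z)$ are real and $\frac{D_\chi(z)}{\zeta(z)}=\exp\big(\log D_\chi(z)-\log\zeta(z)\big)$ on all of $(1,\infty)$; continuity of $\exp$ (together with continuity of $\log$ at positive arguments for the converse, and the convention $\exp(-\infty)=0$ for the limiting case $\delta(A)=0$) then identifies the two limits.

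For $\delta(S)$, I would apply Lemma~\ref{lem:AD} with $\chi$ characteristic on $S$: on $\{\Real(z)>1\}$ one has $P_S(z)=\log D_\chi(z)-g(z)$ with $g$ holomorphic, hence bounded, near $z=1$. Since $\zeta$ has a simple pole of residue $1$ at $z=1$, $\log\zeta(z)=\log\frac{1}{z-1}+O(1)\to+\infty$ as $z\to1^+$, so $\frac{P_S(z)}{\log\zeta(z)}=\frac{\log D_\chi(z)}{\log\zeta(z)}-\frac{g(z)}{\log\zeta(z)}$ and the last term tends to $0$; hence the limit defining the Dirichlet density of $S$ exists precisely when $\lim_{z\to1^+}\frac{\log D_\chi(z)}{\log\zeta(z)}$ does, and the two agree. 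This is the displayed formula for $\delta(S)$.

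There is essentially no obstacle here; the only points needing a moment's care are the boundary case $\delta(A)=0$ in the passage to $\exp$, where the difference of logarithms must be allowed to diverge to $-\infty$, and the (already established, in the proof of Lemma~\ref{lem:AD}) fact that $g$ is genuinely holomorphic, and in particular finite, at $z=1$ --- which is exactly what makes the error term $g(z)/\log\zeta(z)$ negligible in the limit.
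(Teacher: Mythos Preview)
Your proposal is correct and follows exactly the approach the paper intends: the paper does not give a detailed argument but simply remarks that the proposition ``is evident in view of Lemma~\ref{lem:AD}'', and you have spelled out precisely this deduction, including the identification $D_\chi(z)=\sum_{n\in A}n^{-z}$ and the use of $\log\zeta(z)\to+\infty$ to kill the holomorphic error term~$g$. Your treatment of the boundary case $\delta(A)=0$ via $\exp(-\infty)=0$ is a reasonable reading of how the displayed formula is meant.
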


We now prove a precise relationship between the densities of $A$ and $S$.
This result will be strengthened below
in Proposition~\ref{prop:AandS} under the extra assumption of weak regularity, which
is introduced in the next section.

\begin{prop}\label{prop:deltaAnonzero}
Let $S$ be a set of primes and $\chi:\mathbb{N}\rightarrow \{0, 1\}$ be characteristic on~$S$
and let $A=\{n\in \mathbb{N}: \chi(n)=1\}$.
If $\delta(A) \neq 0$ (in particular, the limit exists), then $\delta(S)=1$.
\end{prop}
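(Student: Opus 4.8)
The plan is to read off everything from Lemma~\ref{lem:AD}, Proposition~\ref{prop:compAS}, and the fact that $\zeta$ has a simple pole at $z=1$. First I would note that, since $A\subseteq\NN$, the partial sums $\sum_{n\in A}n^{-z}$ are non-negative, so the hypothesis $\delta(A)\neq 0$ in fact means $\delta(A)>0$. Applying the second formula of Proposition~\ref{prop:compAS}, this shows that
$$L:=\lim_{z\to 1^+}\bigl(\log D_\chi(z)-\log\zeta(z)\bigr)=\log\delta(A)$$
exists as a finite real number.

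Next I would bring in Lemma~\ref{lem:AD} to write $\log D_\chi(z)=P_S(z)+g(z)$ with $g$ holomorphic on $\{\Real(z)>\tfrac12\}$, so in particular $g(z)\to g(1)\in\RR$ as $z\to 1^+$; and I would use that $\zeta$ has a simple pole of residue~$1$ at~$1$, hence $\log\zeta(z)\to+\infty$ as $z\to 1^+$. Then, dividing by $\log\zeta(z)$, one has
$$\frac{P_S(z)}{\log\zeta(z)}=\frac{\bigl(\log D_\chi(z)-\log\zeta(z)\bigr)-g(z)}{\log\zeta(z)}+1.$$
The numerator of the fraction on the right tends to the finite quantity $L-g(1)$ while the denominator tends to $+\infty$, so that fraction tends to $0$; consequently the limit of the left-hand side exists and equals~$1$. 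By the definition of Dirichlet density (equivalently, the first formula of Proposition~\ref{prop:compAS}), this is exactly the statement $\delta(S)=1$.

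I do not expect a genuine obstacle here; the only point meriting a word of care is that the argument really does establish that the limit defining $\delta(S)$ exists, rather than merely pinning down its value under the a priori assumption of existence — but that is automatic from the explicit computation above. Conceptually, the mechanism is simply that passing from $A$ to $S$ trades the multiplicative ratio $D_\chi(z)/\zeta(z)$ for $\log D_\chi(z)/\log\zeta(z)$ with $\log\zeta(z)\to\infty$, so any finite nonzero limiting value for $A$ is flattened to the value~$1$ for~$S$.
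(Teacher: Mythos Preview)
Your proof is correct and follows essentially the same approach as the paper: both use Proposition~\ref{prop:compAS} to obtain a finite limit for $\log D_\chi(z)-\log\zeta(z)$, then invoke Lemma~\ref{lem:AD} to relate $\log D_\chi(z)$ to $P_S(z)$. The only cosmetic difference is in the final step: the paper rewrites the difference as $-\sum_{p\notin S}p^{-z}+g(z)$ and deduces that $\sum_{p\notin S}p^{-1}$ converges (so $\delta(\PP\setminus S)=0$), whereas you divide directly by $\log\zeta(z)$ and pass to the limit; both routes are equally short and rest on the same ingredients.
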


\begin{proof}
As $\delta(A) \neq 0$, it follows from Proposition~\ref{prop:compAS}
that
$$\lim_{z\rightarrow 1^+} (\log D_{\chi}(z) - \log \zeta(z))$$
exists. But we have by Lemma~\ref{lem:AD}
\begin{equation}\label{eq:deltaA}
 \log D_{\chi}(z) - \log \zeta(z) = \sum_{p \in S} \frac{1}{p^z} - \sum_{p \in \PP} \frac{1}{p^z} + g(z)
= - \sum_{p \not\in S} \frac{1}{p^z}  + g(z),
\end{equation}
where $g$ is a function that is holomorphic on $\{\Real(z) \ge 1\}$.
This implies the convergence of $\sum_{p \not\in S} \frac{1}{p}$, showing that $\PP \setminus S$
is a set of Dirichlet density~$0$, thus $S$ is of Dirichlet density~$1$.
\end{proof}

\subsection{Regular and weakly regular sets of primes}

\begin{defi}\label{defi:regular}
Let $S \subseteq \PP$ be a set of primes.
We call $S$ \emph{weakly regular} if there is $a \in \RR$ and
a function $g(z)$ which is holomorphic on $\{\Real(z) > 1\}$ and continuous (in particular, finite) on $\{\Real(z) \ge 1\}$
such that
$$ P_S(z) = a \log\big(\frac{1}{z-1}\big) + g(z).$$
As in \cite{Na} (and \cite{IlGa}) we say that $S$ is \emph{regular}\footnote{Added in proof: The notion of a regular set of primes already appeared in \cite{Del56}.}
if the function $g$ is holomorphic on $\{\Real(z) \ge 1\}$.
\end{defi}

Clearly, every regular set~$S$ is weakly regular.
If $S$ is weakly regular, it directly follows that it has a Dirichlet density, namely $\delta(S)=a$.
If $S$ is regular (weakly regular) of density~$0$,
then $P_S$ is holomorphic (continuous) on $\{\Real(z)\ge 1\}$.

\begin{prop}\label{prop:AandS}
Let $S$ be a weakly regular set of primes and
$\chi:\mathbb{N}\rightarrow \{0, 1\}$ be a multiplicative function characteristic
on primes with respect to $S$ and let $A=\{n\in \mathbb{N}: \chi(n)=1\}$.
Then
$$ \delta(A) \neq 0 \;\;\Leftrightarrow\;\; \delta(S) = 1.$$
\end{prop}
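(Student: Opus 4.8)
The plan is to establish both implications by exploiting the weak regularity of $S$ to control the behaviour of $\log D_\chi(z) - \log\zeta(z)$ as $z \to 1^+$, and then invoke Proposition~\ref{prop:compAS} to translate this into a statement about $\delta(A)$. The key formula is the one obtained by combining weak regularity with Lemma~\ref{lem:AD}: since $P_S(z) = a\log\big(\frac{1}{z-1}\big) + g_0(z)$ with $g_0$ continuous on $\{\Real(z)\ge 1\}$, and since $\log\zeta(z) = \log\big(\frac{1}{z-1}\big) + h(z)$ with $h$ holomorphic near $z=1$ (the zeta function having a simple pole of residue~$1$), Lemma~\ref{lem:AD} gives $\log D_\chi(z) = P_S(z) + g_1(z)$ with $g_1$ holomorphic on $\{\Real(z) > 1/2\}$. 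Hence
$$ \log D_\chi(z) - \log\zeta(z) = (a-1)\log\big(\tfrac{1}{z-1}\big) + g(z), $$
where $g(z) = g_0(z) + g_1(z) - h(z)$ is continuous (in particular bounded) on a punctured neighbourhood of $z=1$ within $(1,\infty)$.

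For the direction ($\Leftarrow$): if $\delta(S) = 1$, then $a = 1$ (recall weak regularity forces $\delta(S) = a$), so the displayed expression reduces to $g(z)$, which has a finite limit as $z \to 1^+$. By Proposition~\ref{prop:compAS}, $\delta(A) = \exp\big(\lim_{z\to 1^+}(\log D_\chi(z) - \log\zeta(z))\big) = \exp\big(\lim_{z\to1^+} g(z)\big) > 0$, since the exponential is always strictly positive. This proves $\delta(A) \neq 0$.

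For the direction ($\Rightarrow$): suppose $a = \delta(S) < 1$, i.e.\ $a - 1 < 0$. Then $(a-1)\log\big(\frac{1}{z-1}\big) \to -\infty$ as $z \to 1^+$, while $g(z)$ stays bounded, so $\log D_\chi(z) - \log\zeta(z) \to -\infty$; applying Proposition~\ref{prop:compAS} again, $\delta(A) = \exp(-\infty) = 0$. Contrapositively, $\delta(A) \neq 0$ forces $a = 1$, i.e.\ $\delta(S) = 1$. (This reproves and slightly refines Proposition~\ref{prop:deltaAnonzero}, whose hypothesis we no longer need to assume since weak regularity already guarantees the relevant limit exists.) The main subtlety to be careful about is ensuring the limit in Proposition~\ref{prop:compAS} genuinely exists in the extended sense used here — but weak regularity is precisely what supplies this, since it pins down the singular part $(a-1)\log\frac{1}{z-1}$ exactly and leaves only a continuous remainder. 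No nontrivial analytic input beyond the pole of $\zeta$ and Lemma~\ref{lem:AD} is required.
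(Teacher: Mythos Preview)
Your proof is correct and follows essentially the same approach as the paper: both use Lemma~\ref{lem:AD} together with weak regularity to control $\log D_\chi(z) - \log\zeta(z)$ near $z=1$ and then invoke Proposition~\ref{prop:compAS}. The only cosmetic difference is that the paper cites Proposition~\ref{prop:deltaAnonzero} for the ($\Rightarrow$) direction and phrases ($\Leftarrow$) via the weak regularity of the complement $\PP\setminus S$, whereas you handle both directions uniformly through the single formula $(a-1)\log\frac{1}{z-1} + g(z)$; your packaging is arguably a bit cleaner but not substantively different.
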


\begin{proof}
The direction `$\Rightarrow$' was proved in Proposition~\ref{prop:deltaAnonzero} without the assumption
of weak regularity. Hence, we now assume that $S$ is weakly regular such that $\delta(S)=1$.
It follows that $\PP \setminus S$ is weakly regular of density~$0$,
meaning that $\sum_{p \not\in S} \frac{1}{p^z}$
defines a continuous function on $\{\Real(z) \ge 1\}$.
From Equation~\eqref{eq:deltaA} we get that
$\log D_{\chi}(z) - \log \zeta(z)$ is continuous on $\{\Real(z) \ge 1\}$,
in particular the limit $\lim_{z \rightarrow 1^+}$ exists, whence by
Proposition~\ref{prop:compAS} it follows that $\delta(A)$ exists and is nonzero.
\end{proof}

We next show that sets of primes that have a natural density and additionally satisfy
certain error bounds for the convergence of the limit defining the natural density
are (weakly) regular. In \cite{IlGa}, Proposition~2.2, we proved such a statement.
We will now weaken the assumption on the error term in a way that still allows
to conclude weak regularity instead of regularity.

\begin{prop}\label{prop:weakly-regular}
Let $S$ be a set of primes having natural density $d(S)$.
Let $E(x) := \frac{\pi_S(x)}{\pi(x)} - d(S)$ be the error function.
If the integral $\int_2^\infty \frac{|E(x)|}{x \log(x)}dx$ converges,
then $S$ is a weakly regular set of primes having Dirichlet density $\delta(S) = d(S)$.
\end{prop}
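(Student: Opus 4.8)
The plan is to express $P_S(z)$ in terms of the counting function $\pi_S(x)$ via partial summation (Abel summation / Riemann--Stieltjes integration), then substitute $\pi_S(x) = (d(S) + E(x))\pi(x)$ and analyse the two resulting pieces separately. The first piece, coming from $d(S)\pi(x)$, will reproduce the main term $d(S)\log\big(\tfrac{1}{z-1}\big)$ up to a function that is nice on $\{\Real(z)\ge 1\}$; this part is essentially the classical computation that $\sum_p p^{-z}$ has the same singularity at $z=1$ as $\log\zeta(z)$, which we may take from the prime number theorem (or deduce from Lemma~\ref{lem:AD} together with the known behaviour of $\zeta$). The second piece, coming from the error term $E(x)\pi(x)$, must be shown to define a \emph{continuous} function on the closed half-plane $\{\Real(z)\ge 1\}$ — this is exactly where the hypothesis $\int_2^\infty \frac{|E(x)|}{x\log x}\,dx < \infty$ enters.

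Concretely, I would write, for $\Real(z) > 1$,
$$ P_S(z) = \sum_{p \in S} p^{-z} = \int_{2^-}^\infty x^{-z}\, d\pi_S(x) = z\int_2^\infty \frac{\pi_S(x)}{x^{z+1}}\, dx,$$
using that $\pi_S(x) \to \infty$ only like $x/\log x$ so the boundary term at infinity vanishes for $\Real(z)>1$. Splitting $\pi_S(x) = d(S)\pi(x) + E(x)\pi(x)$ gives
$$ P_S(z) = d(S)\cdot z\int_2^\infty \frac{\pi(x)}{x^{z+1}}\, dx \;+\; z\int_2^\infty \frac{E(x)\pi(x)}{x^{z+1}}\, dx.$$
For the first integral, recognise that $z\int_2^\infty \pi(x) x^{-z-1}\,dx = \sum_{p} p^{-z} = P_\PP(z)$, and by Lemma~\ref{lem:AD} applied with $\chi \equiv 1$ (or directly) one has $P_\PP(z) = \log\zeta(z) - g_0(z)$ with $g_0$ holomorphic on $\{\Real(z)>1/2\}$; since $\zeta$ has a simple pole of residue $1$ at $z=1$, $\log\zeta(z) = \log\big(\tfrac{1}{z-1}\big) + h(z)$ with $h$ holomorphic near $z=1$ and in fact continuous on $\{\Real(z)\ge 1\}$. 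Thus the first term contributes exactly $d(S)\log\big(\tfrac{1}{z-1}\big)$ plus a function continuous on $\{\Real(z)\ge 1\}$.

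It remains to handle $G(z) := z\int_2^\infty \frac{E(x)\pi(x)}{x^{z+1}}\,dx$ and show it extends continuously to $\{\Real(z)\ge 1\}$. Using the crude bound $\pi(x) = O\big(\tfrac{x}{\log x}\big)$, the integrand is dominated (up to a constant and the factor $|z|$) by $\frac{|E(x)|}{x^{\Real(z)}\log x} \le \frac{|E(x)|}{x\log x}$ for $\Real(z)\ge 1$, which is integrable by hypothesis and independent of $z$. Hence the integral converges absolutely and uniformly on $\{\Real(z)\ge 1\}$ (and locally uniformly, to control the harmless factor $z$), so $G(z)$ is continuous there by dominated convergence, and holomorphic on the open half-plane $\{\Real(z)>1\}$ by Morera's theorem / differentiation under the integral sign. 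Collecting terms, $P_S(z) = d(S)\log\big(\tfrac{1}{z-1}\big) + g(z)$ with $g$ holomorphic on $\{\Real(z)>1\}$ and continuous on $\{\Real(z)\ge 1\}$, which is precisely weak regularity with $a = d(S)$; that $\delta(S) = d(S)$ then follows from the remark after Definition~\ref{defi:regular}.

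The main obstacle is purely technical: justifying the interchange of limit and integral (continuity of $G$ up to the line $\Real(z)=1$) and checking that the $\pi(x)$-weight together with the $\tfrac{1}{\log x}$ in the hypothesis match up, i.e.\ that replacing $\pi(x)$ by its Chebyshev bound $O(x/\log x)$ is exactly what makes the hypothesised integral the right one. No deep input beyond partial summation, the pole of $\zeta$, and dominated convergence is needed.
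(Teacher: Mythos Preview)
Your proposal is correct and follows essentially the same route as the paper's proof: both obtain $P_S(z) = d(S)\,P_\PP(z) + z\int_2^\infty E(x)\pi(x)\,x^{-z-1}\,dx$ via Abel summation, reduce to showing that the error integral is continuous on $\{\Real(z)\ge 1\}$, and control it by replacing $\pi(x)$ with a Chebyshev-type bound $O(x/\log x)$ so that the hypothesis $\int_2^\infty \frac{|E(x)|}{x\log x}\,dx < \infty$ furnishes an integrable dominating function. The only cosmetic differences are that the paper quotes the explicit Rosser inequality $\pi(x) < x/(\log x - 4)$ and writes out the continuity as an $\epsilon$--$N$ argument rather than invoking dominated convergence.
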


\begin{proof}
The proof follows the proof of \cite{IlGa}, Proposition~2.2, very closely and the
reader is referred there for some of the calculations.
We put $g(x) := E(x)\pi(x)$ and $f(z) = \int_{2}^{\infty} \frac{g(x)}{x^{z+1}} dx$.
Then $P_S(z) = d(S) P(z) + z f(z)$. Hence, it suffices to show that $f$ is
continuous on $\{\Real(z)\ge 1\}$.
We use $\pi(x) < \frac{x}{\log(x)-4}$ for $x > 55$ (by Theorem~29 of~\cite{Rosser})
in order to obtain the estimate
$$ |g(x)|=|E(x)\pi(x)| \le \frac{x |E(x)|}{\log(x)-4}.$$
We now use this to estimate $f(z)$ for $\Real(z)\geq 1$:
$$
|\int_{56}^{\infty}\frac{g(x)dx}{x^{z+1}}|
\le \int_{56}^{\infty}\frac{|g(x)|}{x^{\Real(z)+1}}dx
\le \int_{56}^{\infty}\frac{|E(x)|}{x(\log(x)-4)}dx
\le 2 \int_{56}^{\infty}\frac{|E(x)|}{x\log(x)}dx
$$
The assumption ensures that the last integral is convergent.
Let now $\epsilon > 0$. There is hence some $N$ such that
$|\int_{N}^{\infty}\frac{g(x)dx}{x^{z+1}}| < \epsilon/4$
for any $z$ with $\Real(z)\geq 1$.
Moreover, $f_N(z) := \int_{2}^{N}\frac{g(x)dx}{x^{z+1}}$ is continuous in a neighbourhood of
any such~$z$. In particular, for any $z_1$ with $\Real(z_1)\ge 1$ close enough to $z$
we have $|f_N(z_1)-f_N(z)| < \epsilon/2$. This implies
$|f(z_1)-f(z)| < \epsilon$, as required.
\end{proof}

The following corollary for an explicit error function will be applied in the
situation of CM modular forms in Section~\ref{sec:ST} (see also Proposition \ref{prop:chebotarev} below).

\begin{cor}\label{cor:weakly-regular}
Let $S$ be a set of primes having natural density $d(S)$.
Let $E(x) := \frac{\pi_S(x)}{\pi(x)} - d(S)$ be the error function.
If there are $\alpha > 0$, $C > 0$ and $B>0$ such that
$|E(x)| \le \frac{C}{\log(x)^\alpha}$ for all $x > B$,
then $S$ is a weakly regular set of primes having Dirichlet density $\delta(S) = d(S)$.
\end{cor}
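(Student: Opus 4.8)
The plan is to reduce Corollary~\ref{cor:weakly-regular} to Proposition~\ref{prop:weakly-regular} by verifying that the hypothesis $|E(x)| \le \frac{C}{\log(x)^\alpha}$ for $x > B$ forces the convergence of the integral $\int_2^\infty \frac{|E(x)|}{x\log(x)}\,dx$. Everything then follows at once from the proposition, including the identification $\delta(S) = d(S)$.

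First I would split the integral at $\max(2, B)$: the piece $\int_2^{\max(2,B)} \frac{|E(x)|}{x\log(x)}\,dx$ is an integral of a bounded function (note $|E(x)| \le 1$ always, since $0 \le \pi_S(x) \le \pi(x)$, and the integrand is continuous away from $x=1$, so one should really start the split at some point $>1$, e.g.\ at $2$ as in the statement) over a compact interval, hence finite. For the tail I would plug in the bound to get
$$\int_{\max(2,B)}^\infty \frac{|E(x)|}{x\log(x)}\,dx \le \int_{\max(2,B)}^\infty \frac{C}{x\log(x)^{\alpha+1}}\,dx.$$
Then the substitution $u = \log(x)$, $du = \frac{dx}{x}$, turns the right-hand side into $C\int_{\log(\max(2,B))}^\infty \frac{du}{u^{\alpha+1}}$, which converges precisely because $\alpha + 1 > 1$, i.e.\ because $\alpha > 0$. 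This gives the convergence of the full integral, so Proposition~\ref{prop:weakly-regular} applies and yields the claim.

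There is no real obstacle here; the only thing to be mildly careful about is the lower endpoint of integration, since $\log(x)$ vanishes at $x=1$ and the integrands in both the hypothesis of Proposition~\ref{prop:weakly-regular} and in the corollary are written with a lower limit $2$, where $\log(2) > 0$, so no singularity is encountered. One should also remark that the exponent $\alpha+1$ in the tail comes from combining the extra $\log(x)^\alpha$ in the denominator coming from $E(x)$ with the $\log(x)$ already present in the integrand of Proposition~\ref{prop:weakly-regular}; it is this gain of one power of $\log$ that makes the assumption $\alpha>0$ (rather than $\alpha > 1$) sufficient. The whole argument is three lines once this observation is in place.
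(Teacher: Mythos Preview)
Your proposal is correct and follows essentially the same approach as the paper: both reduce to Proposition~\ref{prop:weakly-regular} by checking that the bound $|E(x)|\le C/\log(x)^\alpha$ makes $\int_2^\infty \frac{|E(x)|}{x\log(x)}\,dx$ converge. The paper simply notes that $-\frac{1}{\alpha\log(x)^\alpha}$ is an antiderivative of $\frac{1}{x\log(x)^{1+\alpha}}$ tending to~$0$, which is exactly what your substitution $u=\log(x)$ computes.
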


\begin{proof}
Note that the derivative of $- \frac{1}{\alpha \log(x)^{\alpha}}$ is $\frac{1}{x \log(x)^{1+\alpha}}$.
Thus the former is a primitive function for the upper bound of the error term.
As it clearly tends to $0$ for $x \to \infty$, it follows that the assumptions of
Proposition~\ref{prop:weakly-regular} are satisfied.
\end{proof}

The Chebotarev Density Theorem, which plays an essential role in Section~\ref{sec:ST},
provides us with examples of (weakly) regular sets of primes (see Proposition \ref{prop:chebotarev} below), which are used in Section~\ref{sec:BK}.

\begin{defi}
Let $K/\QQ$ be a finite Galois extension with Galois group $G$. We will say that a set $S$ of finite rational primes is a \emph{Chebotarev set for $K/\mathbb{Q}$} if for all $p\in S$, $p$ is unramified in $K/\QQ$ and moreover there exists a subset $C\subseteq G$, invariant under conjugation, such that
$S=\{p\textnormal{ rational prime: } \mathrm{Frob}_{p}\in C\}$,
where $\mathrm{Frob}_p$ denotes a lift to $G$ of the Frobenius element of the residual extension of $K/\QQ$ at a prime~$\mathfrak{p}\vert p$.
\end{defi}

We quote the effective version of the Chebotarev Density Theorem from \cite{Serre1981}.

\begin{thm}[Chebotarev Density Theorem]\label{thm:EffectiveChebotarev}
Let $K/\QQ$ be a finite Galois extension, and let $S$ be a Chebotarev set, which corresponds to $C\subset \mathrm{Gal}(K/\QQ)$. Then the following hold:
\begin{enumerate}[(a)]
\itemsep=0cm plus 0pt minus 0pt
\parsep=0.0cm
\item\label{a-thm:EffectiveChebotarev} For all sufficiently large~$x$,
$\pi_S(x)=\frac{\vert C\vert}{\vert G\vert} \pi(x) + O(x\exp(-c\sqrt{\log(x)}))$
for some constant $c>0$.

\item\label{b-thm:EffectiveChebotarev} If we assume the Riemann Hypothesis for the Dedekind zeta function of $K$, then for all sufficiently large~$x$,
$ \pi_S(x)= \frac{\vert C\vert}{\vert G\vert} \pi(x) + O(x^{1/2}\log(x))$.
\end{enumerate}
\end{thm}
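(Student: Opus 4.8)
The plan is to follow the classical analytic route: reduce the conjugacy-class count to a character sum over an abelian subextension, and then feed the relevant Hecke $L$-functions into a prime ideal theorem with a quantified error term. Throughout, $K/\QQ$ is fixed once and for all, so every implied constant is allowed to depend on $K$.

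\emph{Step 1: reduction to cyclic subextensions.} Fix $\sigma\in C$, put $H=\langle\sigma\rangle\subseteq G$ and $L=K^{H}$. The standard Deuring reduction expresses $\pi_S(x)$ as a $\QQ$-linear combination, with coefficients depending only on the group-theoretic data $(G,C,H)$, of the counting functions $\pi^{L}_{\{\tau\}}(x):=\#\{\mathfrak{p}\subset\mathcal{O}_L\text{ unramified in }K,\ \mathfrak{N}\mathfrak{p}\le x,\ \deg\mathfrak{p}=1,\ \mathrm{Frob}_\mathfrak{p}=\tau\text{ in }H\}$, where $\tau$ runs over the generators of $H$ (and over the finitely many conjugates of $H$ in $G$). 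Since $K$ is fixed there are only finitely many such $L$ and $\tau$, and the primes of degree $>1$ contribute only $O(\sqrt{x})$, so it suffices to prove the stated asymptotic for each $\pi^{L}_{\{\tau\}}$.

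\emph{Step 2: character expansion.} As $H=\mathrm{Gal}(K/L)$ is abelian, orthogonality of $\widehat{H}$ gives
$$\pi^{L}_{\{\tau\}}(x)=\frac{1}{|H|}\sum_{\psi\in\widehat{H}}\overline{\psi(\tau)}\sum_{\substack{\mathfrak{p}\subset\mathcal{O}_L\\ \mathfrak{N}\mathfrak{p}\le x}}\psi(\mathrm{Frob}_\mathfrak{p}),$$
and by class field theory each $\psi$ corresponds to a ray-class Hecke character $\chi_\psi$ of $L$, so the inner sum is (up to the degree-$>1$ prime contribution, $O(\sqrt x)$) the prime sum attached to $L(s,\chi_\psi)$. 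The term $\psi=1$ produces the main term $\tfrac1{|H|}\pi_L(x)\sim\tfrac1{|H|}\pi(x)$, and reassembling the $\QQ$-linear combination of Step 1 collapses these main terms into $\tfrac{|C|}{|G|}\pi(x)$.

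\emph{Step 3: prime ideal theorem with error term, and the main obstacle.} For each nontrivial $\chi_\psi$ one has analytic continuation, a functional equation, and a Hadamard factorisation, hence the usual zero-counting bounds and an explicit formula $\sum_{\mathfrak{N}\mathfrak{a}\le x}\Lambda_L(\mathfrak{a})\chi_\psi(\mathfrak{a})=-\sum_\rho \tfrac{x^\rho}{\rho}+O(\ldots)$ over the nontrivial zeros $\rho$. Inserting a de la Vall\'ee Poussin zero-free region $\Real(\rho)<1-c_0/\log(\mathfrak{q}_\psi(|\Im\rho|+2))$ and optimising the truncation yields $\sum_{\mathfrak{N}\mathfrak{a}\le x}\Lambda_L(\mathfrak{a})\chi_\psi(\mathfrak{a})=\delta_{\psi=1}\,x+O\big(x\exp(-c\sqrt{\log x})\big)$ for a suitable $c>0$; partial summation passes to the prime sum, and combining with Steps 1--2 gives part~(a). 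Here the genuine difficulty is a possible Siegel zero: a real zero $\beta_0$ of some $L(s,\chi_\psi)$ extremely close to $1$ would inject a spurious $-x^{\beta_0}/\beta_0$, and one disposes of it either via Siegel's theorem (at the cost of an ineffective $c$) or, keeping effectivity, by noting that at most one such zero can occur for the fixed field $K$ and choosing $c$ small enough to absorb it. For part~(b), observe that $\zeta_K(s)=\zeta_L(s)\prod_{\psi\ne1}L(s,\chi_\psi)$ and $\zeta_L\mid\zeta_K$, so RH for $\zeta_K$ forces all nontrivial zeros of every $L(s,\chi_\psi)$ onto $\Real(s)=\tfrac12$; the explicit formula then gives $\delta_{\psi=1}\,x+O(\sqrt{x}\log^2 x)$ with constants depending on $K$, and the same bookkeeping of Steps 1--2 produces $\pi_S(x)=\tfrac{|C|}{|G|}\pi(x)+O(\sqrt{x}\log x)$. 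All the remaining work — the combinatorics of Step 1, the finitely many sums over $\psi$ and over the subfields, and the passage from the $\Lambda_L$-weighted sums to $\pi$ — is routine precisely because $K$ is held fixed.
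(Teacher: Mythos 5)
The paper offers no proof of this theorem; it is quoted verbatim from Serre's survey \cite{Serre1981}, which in turn presents the Lagarias--Odlyzko effective Chebotarev theorem. Your sketch retraces that classical proof faithfully (Deuring reduction to a cyclic subextension $K/L$, orthogonality over $\widehat{H}$ and the corresponding Hecke $L$-functions of $L$, an explicit formula fed either a de la Vall\'ee Poussin zero-free region or RH for $\zeta_K$, then partial summation back to $\pi$), so your outline is exactly the argument behind the cited result.

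One small correction to your Step~3. The ``at most one exceptional zero'' phenomenon (Landau--Page) is about characters of a common bounded conductor; the finitely many Hecke characters attached to the fixed field $K$ could in principle each contribute a real zero close to $1$, so ``at most one'' is not what you want to invoke. What actually disposes of the problem is simply that $K$ is held fixed throughout: any exceptional real zero $\beta_0<1$ is then a fixed number, so its contribution $x^{\beta_0}/\beta_0$ is $O_K(x^{1-\delta})$ for some $\delta>0$ depending only on $K$, and this is swallowed by $x\exp(-c\sqrt{\log x})$ for \emph{every} fixed $c>0$, not merely for $c$ taken small. (Effectivity of $c$ is a genuine concern only when one wants uniformity in $K$, which the paper does not need.) With that adjustment your outline is a correct account of the proof of the theorem the paper invokes.
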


\begin{prop}\label{prop:chebotarev}
Let $K/\QQ$ be a finite Galois extension and $S$ a Chebotarev set.
Then $S$ is weakly regular.
If the Riemann Hypothesis for the Dedekind zeta function of $K$ holds,
then $S$ is regular.\footnote{Added in proof: The assumption of the Riemann Hypothesis is not necessary, see \cite{Ser76}, Proposition~1.5.}
\end{prop}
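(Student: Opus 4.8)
The plan is to combine the effective Chebotarev Density Theorem (Theorem \ref{thm:EffectiveChebotarev}) with the two criteria already established, namely Corollary \ref{cor:weakly-regular} for weak regularity and, for the regular case, the stronger error bound fed into a suitable sharpening of Proposition \ref{prop:weakly-regular}. First I would observe that a Chebotarev set $S$ has natural density $d(S) = |C|/|G|$, since part \eqref{a-thm:EffectiveChebotarev} of Theorem \ref{thm:EffectiveChebotarev} gives $\pi_S(x) = d(S)\pi(x) + O\big(x\exp(-c\sqrt{\log x})\big)$ and $x/\pi(x) \sim \log x$ by the prime number theorem; dividing through, the error function satisfies
\[
|E(x)| = \left|\frac{\pi_S(x)}{\pi(x)} - d(S)\right| = O\!\left(\frac{x\exp(-c\sqrt{\log x})}{\pi(x)}\right) = O\!\left(\log(x)\exp(-c\sqrt{\log x})\right).
\]
Since $\log(x)\exp(-c\sqrt{\log x})$ is eventually smaller than $C/\log(x)^\alpha$ for any fixed $\alpha > 0$ (because $\sqrt{\log x}$ dominates $(1+\alpha)\log\log x$ for large $x$), the hypothesis of Corollary \ref{cor:weakly-regular} is met, and hence $S$ is weakly regular with $\delta(S) = d(S)$.

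For the second assertion, under RH for $\zeta_K$ part \eqref{b-thm:EffectiveChebotarev} gives $\pi_S(x) = d(S)\pi(x) + O(x^{1/2}\log x)$, so that
\[
|E(x)| = O\!\left(\frac{x^{1/2}\log x}{\pi(x)}\right) = O\!\left(\frac{\log(x)^2}{x^{1/2}}\right).
\]
I would then retrace the proof of Proposition \ref{prop:weakly-regular}: with $g(x) = E(x)\pi(x) = O(x^{1/2}\log x)$ and $f(z) = \int_2^\infty g(x) x^{-z-1}\,dx$, one has $P_S(z) = d(S)P(z) + zf(z)$, where $P(z) = P_\PP(z)$. The point is that the tail bound $|\int_N^\infty g(x)x^{-z-1}\,dx| \le \int_N^\infty |g(x)| x^{-\Real(z)-1}\,dx = O\big(\int_N^\infty x^{-1/2-\Real(z)}\log x\,dx\big)$ converges uniformly for $\Real(z) \ge 1/2 + \epsilon$ and in fact for $\Real(z) \ge 1$ with room to spare, and this region is open; so by the same $\epsilon$-argument as in Proposition \ref{prop:weakly-regular} (splitting off a continuous finite part $f_N$) the function $f$ extends holomorphically — not merely continuously — to $\{\Real(z) > 1/2\}$, hence to a neighbourhood of $\{\Real(z) \ge 1\}$. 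Since $P(z) = \log\zeta(z) + (\textnormal{holomorphic on }\Real(z) > 1/2)$ by Lemma \ref{lem:AD} applied to $\chi = 1$, and $\log\zeta(z) = \log\frac{1}{z-1} + (\textnormal{holomorphic near }z=1)$, we conclude $P_S(z) = d(S)\log\frac{1}{z-1} + g(z)$ with $g$ holomorphic on $\{\Real(z) \ge 1\}$, i.e.\ $S$ is regular.

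The main obstacle is bookkeeping rather than conceptual: in the unconditional case one must check carefully that the Chebotarev error term, once divided by $\pi(x)$, really is $o(\log(x)^{-\alpha})$ for \emph{every} $\alpha$ — this is where the quality of the Chebotarev bound (the $\sqrt{\log x}$ in the exponent, beating any power of $\log\log x$) is used — and in the conditional case one must verify that the improvement in Proposition \ref{prop:weakly-regular} from "continuous on $\{\Real(z) \ge 1\}$" to "holomorphic on a neighbourhood" genuinely follows from the faster ($x^{1/2}$) decay, i.e.\ that the tail integral converges locally uniformly on an open set containing $\{\Real(z) \ge 1\}$ so that Morera/Weierstrass applies. (Here I assume $K/\QQ$ is ramified at only finitely many primes, so that discarding the finitely many ramified primes in $S$ affects none of the analytic statements.)
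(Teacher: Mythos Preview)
Your proposal is correct and follows essentially the same approach as the paper: for weak regularity both you and the paper feed the error bound from part~\eqref{a-thm:EffectiveChebotarev} of Theorem~\ref{thm:EffectiveChebotarev} into Corollary~\ref{cor:weakly-regular}, after dividing by $\pi(x)$ and noting that $(\log x)\exp(-c\sqrt{\log x})$ beats any negative power of $\log x$. For the regular case the paper simply cites Proposition~2.2 of~\cite{IlGa} (which says that a power-saving error $|E(x)| \le C x^{-\alpha}$ implies regularity), whereas you inline the content of that proposition by showing directly that the tail integral $\int_N^\infty g(x)x^{-z-1}\,dx$ converges locally uniformly on $\{\Real(z) > 1/2\}$; this is the same argument, just unpacked.
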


\begin{proof}
Let $C\subset \mathrm{Gal}(K/\QQ)$ be the set corresponding to $S$. Then by part~(\ref{a-thm:EffectiveChebotarev}) of Theorem~\ref{thm:EffectiveChebotarev}, and taking into account that $\frac{x}{\log(x) + 2}<\pi(x)$ for $x\geq 55$ (see Theorem 29 of \cite{Rosser}), it follows that, for all sufficiently large~$x$,
\begin{equation*}
\left| \frac{\pi_S(x)}{\pi(x)}- \frac{\vert C\vert}{\vert G\vert}\right|\leq c_1 \frac{x\exp(-c_2\sqrt{\log(x)})}{\pi(x)}\leq c_1(\log x + 2)\exp(-c_2\sqrt{\log(x)})
\end{equation*}
for some positive constants $c_1$ and $c_2$. It is clear that this quantity is less than or equal to $\frac{c_3}{\log(x)^{\alpha}}$ for sufficiently large $x$, where $\alpha$ and $c_3$ are any positive constants. Thus by Corollary~\ref{cor:weakly-regular} we can conclude that $S$ is weakly regular.

If we assume that the Dedekind zeta function of $K$ satisfies the Riemann Hypothesis,
then part~(\ref{b-thm:EffectiveChebotarev}) of Theorem~\ref{thm:EffectiveChebotarev} yields
\begin{equation*}
\left| \frac{\pi_S(x)}{\pi(x)}- \frac{\vert C\vert}{\vert G\vert}\right|\leq c_1 \frac{x^{1/2}\log(x)}{\pi(x)}\leq c_1(\log x + 2)\log(x)x^{-1/2}
\end{equation*} for all big enough values of~$x$, where $c_1$ is some positive constant. Proposition~2.2 of~\cite{IlGa} implies that $S$ is regular.
\end{proof}

\subsection{An application: weak regularity yields Dedekind-Dirichlet density}

In this section we derive an equidistribution result,
which will allow us to establish our results towards
the Bruinier-Kohnen conjecture in Section~\ref{sec:BK}.

\begin{prop}\label{prop:weak-equi}
Let $\mathbb{P}=\mathbb{P}_{=0}\cup\mathbb{P}_{>0}\cup\mathbb{P}_{<0}$ be a partition of the set of all primes into three weakly regular sets such that  $\mathbb{P}_{=0}$ is of Dirichlet density~$0$ and the Dirichlet density of $\mathbb{P}_{<0}$ is not zero.
Let $\psi:\mathbb{N}\rightarrow \{0, 1, -1\}$ be a multiplicative arithmetic function such that, for every prime $p$, $\psi(p)=0$ (resp. $\psi(p)=1$, $\psi(p)=-1$) if and only if $p\in\mathbb{P}_{=0}$ (resp.\ $p\in\mathbb{P}_{>0}$, $p\in\mathbb{P}_{<0}$).

Then  $\{n: \psi(n)> 0\}$ and $\{n: \psi(n)< 0\}$ have a Dedekind-Dirichlet density, which for
both is $1/2$ of the Dedekind-Dirichlet density of $\{n:\psi(n)\not=0\}$.
\end{prop}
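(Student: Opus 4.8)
The plan is to express everything in terms of Dirichlet series and use Lemma~\ref{lem:AD} together with Proposition~\ref{prop:compAS}. Introduce the three auxiliary multiplicative functions $\psi_{=0},\psi_{>0},\psi_{<0}:\NN\to\{0,1\}$ which are characteristic (in the sense of the first definition) on the sets $\PP_{=0},\PP_{>0},\PP_{<0}$ respectively, i.e.\ for a prime power $\psi_{\bullet}(p^n)=1$ iff $n=1$ and $p\in\PP_{\bullet}$. Let $A_{=0},A_{>0},A_{<0}\subseteq\NN$ be the corresponding sets of natural numbers where each of these functions equals~$1$. Note that $\{n:\psi(n)>0\}$, $\{n:\psi(n)<0\}$ and $\{n:\psi(n)\neq 0\}$ are not literally $A_{>0}$, $A_{<0}$, $A_{>0}\cup A_{<0}$, because $\psi$ can be $-1$ on a product of two primes from $\PP_{<0}$ while $\psi_{<0}$ is $0$ there; so a little care with the bookkeeping of signs is needed, and this is where I would be most careful. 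The clean way around it: work with the single function $\psi$ and split according to parity of the number of prime factors (with multiplicity) lying in $\PP_{<0}$, but the slicker route is the one used for $A_\pm$ elsewhere, namely to relate $D_\psi$ to $D_{|\psi|}$ and to the ``twisted'' series, as follows.

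First I would record, using Lemma~\ref{lem:AD} applied to $\psi$ and to $|\psi|$, that on $\{\Real(z)>1\}$
\begin{align*}
 \log D_{\psi}(z) &= \sum_{p\in\PP}\frac{\psi(p)}{p^z}+g_1(z) = P_{\PP_{>0}}(z)-P_{\PP_{<0}}(z)+g_1(z),\\
 \log D_{|\psi|}(z) &= \sum_{p\in\PP}\frac{|\psi(p)|}{p^z}+g_2(z) = P_{\PP_{>0}}(z)+P_{\PP_{<0}}(z)+g_2(z),
\end{align*}
with $g_1,g_2$ holomorphic on $\{\Real(z)>1/2\}$. Since $\PP_{=0}$ has Dirichlet density~$0$ and is weakly regular, $P_{\PP_{=0}}$ is continuous on $\{\Real(z)\ge 1\}$, so $P_{\PP_{>0}}(z)+P_{\PP_{<0}}(z) = P(z) - P_{\PP_{=0}}(z) = \log(1/(z-1)) + (\text{continuous on }\Real(z)\ge 1)$. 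Hence $\{n:\psi(n)\neq 0\}$, which equals the set on which $|\psi|=1$, satisfies $\log D_{|\psi|}(z)-\log\zeta(z) = -P_{\PP_{=0}}(z) + (\text{continuous}) $, a continuous function on $\{\Real(z)\ge 1\}$; by Proposition~\ref{prop:compAS} its Dedekind-Dirichlet density $\delta_0 := \delta(\{n:\psi(n)\neq 0\})$ exists and equals $\exp(\lim_{z\to 1^+}(\log D_{|\psi|}(z)-\log\zeta(z)))>0$.

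Next, the key point: $D_\psi(z)=\sum_{n}\psi(n)n^{-z} = \sum_{n,\psi(n)>0}n^{-z} - \sum_{n,\psi(n)<0}n^{-z}$, while $D_{|\psi|}(z) = \sum_{n,\psi(n)>0}n^{-z} + \sum_{n,\psi(n)<0}n^{-z}$, so
\begin{align*}
 \sum_{n:\psi(n)>0}\frac{1}{n^z} &= \tfrac12\big(D_{|\psi|}(z)+D_{\psi}(z)\big), &
 \sum_{n:\psi(n)<0}\frac{1}{n^z} &= \tfrac12\big(D_{|\psi|}(z)-D_{\psi}(z)\big).
\end{align*}
Dividing by $\zeta(z)$ and taking $z\to 1^+$, it therefore suffices to show $D_\psi(z)/\zeta(z)\to 0$. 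Now $\log D_\psi(z) - \log\zeta(z) = P_{\PP_{>0}}(z)-P_{\PP_{<0}}(z) - P(z) + g_1(z) = -2P_{\PP_{<0}}(z) - P_{\PP_{=0}}(z) + g_1(z)$. Since $\PP_{<0}$ is weakly regular with $\delta(\PP_{<0}) = a_- > 0$, we have $P_{\PP_{<0}}(z) = a_-\log(1/(z-1)) + (\text{continuous on }\Real(z)\ge1)$, and $P_{\PP_{=0}}$ is continuous there, so $\log D_\psi(z)-\log\zeta(z) = -2a_-\log(1/(z-1)) + h(z)$ with $h$ continuous on $\{\Real(z)\ge1\}$. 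Exponentiating, $D_\psi(z)/\zeta(z) = (z-1)^{2a_-}e^{h(z)} \to 0$ as $z\to 1^+$ because $2a_- > 0$ and $e^{h(z)}$ is bounded near $z=1$. Consequently both $\sum_{n:\psi(n)>0}n^{-z}/\zeta(z)$ and $\sum_{n:\psi(n)<0}n^{-z}/\zeta(z)$ tend to $\tfrac12\delta_0$, which is exactly the claim. The main obstacle, and the step demanding the most care, is the correct bookkeeping of signs relating $D_\psi$, $D_{|\psi|}$ and the partial Dirichlet series over $\{\psi>0\}$ and $\{\psi<0\}$, together with the legitimacy of taking logarithms of $D_\psi$ on $\{\Real(z)>1\}$ (this needs $D_\psi$ to be nonvanishing there, which follows from its absolutely convergent Euler product with factors close to~$1$); once that is in place, the weak regularity hypotheses feed in cleanly through Lemma~\ref{lem:AD} and Proposition~\ref{prop:compAS}.
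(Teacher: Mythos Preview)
Your proof is correct and follows essentially the same route as the paper: both arguments use Lemma~\ref{lem:AD} to express $\log D_\psi$ and $\log D_{|\psi|}$ in terms of $P_{\PP_{>0}}$ and $P_{\PP_{<0}}$, invoke weak regularity to control the behaviour as $z\to 1^+$, and conclude by the sum/difference identity $\sum_{\psi(n)=\pm 1} n^{-z}=\tfrac12\big(D_{|\psi|}(z)\pm D_\psi(z)\big)$. The only cosmetic differences are that the paper appeals to Proposition~\ref{prop:AandS} for the positivity of $\delta(\{n:\psi(n)\neq 0\})$ and works with $(z-1)D_\psi(z)$ rather than $D_\psi(z)/\zeta(z)$, which amounts to the same thing since $(z-1)\zeta(z)\to 1$; your opening paragraph about the auxiliary functions $\psi_\bullet$ is a harmless detour you rightly abandon.
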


\begin{proof}
Let us record first that the set $\{n:\psi(n)\not=0\}$ indeed has a positive Dedekind-Dirichlet
density by Proposition~\ref{prop:AandS}, that is, the limit
\begin{equation}\label{eq:Dpsi1}
\lim_{z \to 1^+} (z-1)\sum_{n \in \NN}\frac{|\psi(n)|}{n^z} = d
\end{equation}
exists with $0<d\le 1$.
Lemma~\ref{lem:AD} yields
$$\log(D_\psi(z)) = \sum_{p \in \PP} \frac{\psi(p)}{p^z} + g(z)
= \sum_{p \in \PP_{>0}}\frac{1}{p^z} - \sum_{p \in \PP_{<0}}\frac{1}{p^z} + g(z),$$
where $g(z)$ is a function that is holomorphic on $\{\Real(z) > 1/2\}$.
Using the definition of weak regularity for the sets $\PP_{>0}$ and $\PP_{<0}$, we obtain
$$\log(D_\psi(z)) = a \log(\frac{1}{z-1}) + h(z),$$
or, equivalently,
\begin{equation*}
D_\psi(z) = \frac{1}{(z-1)^a} \exp(h(z)),
\end{equation*}
where $a$ is $\delta(\PP_{>0}) - \delta(\PP_{<0})$, which is strictly less than~$1$
by assumption, and $h(z)$ is a function that is continuous on $\{\Real(z) \ge 1\}$.
Taking the exponential yields
\begin{equation}\label{eq:Dpsi2}
D_\psi(z) = \sum_{n \in \NN, \psi(n)=1} \frac{1}{n^z} - \sum_{n \in \NN, \psi(n)=-1} \frac{1}{n^z}
= \frac{1}{(z-1)^a} \phi(z),
\end{equation}
where $\phi(z)=\exp(h(z))$ is also continuous on $\{\Real(z) \ge 1\}$.
Adding Equations \eqref{eq:Dpsi1} and~\eqref{eq:Dpsi2} yields
$$ \lim_{z \to 1^+} (z-1)\big( 2\sum_{n \in \NN, \psi(n)=1} \frac{1}{n^z} \big) = d,$$
which is the claimed formula.
\end{proof}

\subsection{Towards natural density}

In this section we show that regularity of density~$1$ for a set $S \subseteq \PP$ suffices
to conclude that the set of natural numbers corresponding to a function that is
characteristic on~$S$ has a positive {\em natural} density, and not only a Dedekind-Dirichlet
density, whose existence was shown in Proposition~\ref{prop:AandS}.
In fact, one sees that a slightly weaker assumption than regularity works, however,
we are unable to prove that weak regularity is enough.

\begin{prop}\label{prop:middle-regular}
Let $S \subseteq \PP$ be a set of primes of density~$1$ and let $\chi:\NN \to \{0,1\}$
be a multiplicative function characteristic on~$S$.
We assume that $S$ satisfies the following condition (which is implied by regularity
but not weak regularity):
\begin{quote}
The function
$$g(z) := \sum_{p \in S} \frac{1}{p^z} - \log(\frac{1}{z-1}), $$
which is holomorphic on $\{\Real(z)>1\}$, is once differentiable
at $z= 1$ in the sense that $\varphi(z) := \frac{g(z)-g(1)}{z-1}$
can be continued to a continuous function on $\{\Real(z)\ge 1\}$.
\end{quote}
Then there are $0 < a \in \RR$ and a continuous function $h$ on $\{\Real(z) \ge 1\}$
such that
$$ D_\chi(z) = \frac{a}{z-1} + h(z).$$
\end{prop}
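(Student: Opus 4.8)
The plan is to feed everything through Lemma~\ref{lem:AD}, taking the logarithm of the Euler product, and then to read off the first-order behaviour at $z=1$ by hand. The one delicate point — and the only real obstacle — is that after taking logarithms one obtains a function which is \emph{a priori} merely continuous at $z=1$, not holomorphic, so one cannot simply differentiate an exponential; the extra differentiability hypothesis on~$g$ is precisely what supplies the missing first-order information.

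First I would apply Lemma~\ref{lem:AD} to the function $\chi$, which is characteristic on~$S$: this gives, on $\{\Real(z)>1\}$,
$$ \log D_\chi(z) = P_S(z) + g_0(z), $$
where $g_0$ is holomorphic on $\{\Real(z)>1/2\}$, hence in particular holomorphic — so continuous — at $z=1$. By the very definition of $g$ in the statement, $P_S(z) = \log\big(\tfrac{1}{z-1}\big) + g(z)$, so that
$$ D_\chi(z) = \frac{1}{z-1}\,\exp\big(g(z)+g_0(z)\big) \qquad (\Real(z)>1). $$
Write $F(z) := g(z) + g_0(z)$. The hypothesis states that $g(z) = g(1) + (z-1)\varphi(z)$ with $\varphi$ continuous on $\{\Real(z)\ge 1\}$, and $g_0$ is continuous there as well, so $F$ extends to a continuous function on $\{\Real(z)\ge 1\}$; moreover, for real $z>1$ the quantities $D_\chi(z)$, $\tfrac{1}{z-1}$ are real and positive, so $F(z)=\log\big((z-1)D_\chi(z)\big)\in\RR$ and therefore $F(1)\in\RR$. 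Set $a := \exp\big(F(1)\big) = \exp\big(g(1)+g_0(1)\big) > 0$.

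Next I would introduce the entire function $u(w) := \frac{\exp(w)-1}{w}$ (extended by $u(0)=1$) and compute, on $\{\Real(z)>1\}$,
$$ D_\chi(z) - \frac{a}{z-1} = \frac{\exp(F(z)) - \exp(F(1))}{z-1} = a\cdot \frac{F(z)-F(1)}{z-1}\cdot u\big(F(z)-F(1)\big). $$
The factor $u\big(F(z)-F(1)\big)$ is continuous on $\{\Real(z)\ge 1\}$ because $F$ is continuous there and $u$ is entire. For the remaining factor I would split
$$ \frac{F(z)-F(1)}{z-1} = \frac{g(z)-g(1)}{z-1} + \frac{g_0(z)-g_0(1)}{z-1} = \varphi(z) + \frac{g_0(z)-g_0(1)}{z-1}; $$
the first summand is continuous on $\{\Real(z)\ge 1\}$ by hypothesis, while the second has a removable singularity at $z=1$ (value $g_0'(1)$) and is holomorphic on $\{\Real(z)>1/2\}$, hence continuous on $\{\Real(z)\ge 1\}$. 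Consequently $h(z) := a\cdot\frac{F(z)-F(1)}{z-1}\cdot u\big(F(z)-F(1)\big)$ is continuous on $\{\Real(z)\ge 1\}$ and satisfies $D_\chi(z) = \frac{a}{z-1} + h(z)$ on $\{\Real(z)>1\}$, with $a>0$, which is exactly the assertion.

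As already indicated, the heart of the matter is the step $D_\chi(z) - \frac{a}{z-1} = \frac{\exp(F(z))-\exp(F(1))}{z-1}$: one would like to say this is continuous ``because $\exp\circ F$ is differentiable at $z=1$'', but $F$ is only known to be continuous there. The differentiability hypothesis on $g$ is tailor-made to make the difference quotient $\frac{g(z)-g(1)}{z-1}$ — and hence, together with the manifestly holomorphic contribution of $g_0$, also $\frac{F(z)-F(1)}{z-1}$ — continuous up to the line $\Real(z)=1$, and the factorization $\exp(w)-1 = w\,u(w)$ then takes care of the rest while never dividing by a quantity that might vanish.
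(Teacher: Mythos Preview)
Your proof is correct and follows essentially the same route as the paper's: apply Lemma~\ref{lem:AD}, combine with the hypothesis to write $\log D_\chi(z) = \log\frac{1}{z-1} + F(z)$ where the difference quotient $\tfrac{F(z)-F(1)}{z-1}$ extends continuously to $\{\Real(z)\ge 1\}$, and then factor the exponential to peel off the simple pole. Your use of the entire function $u(w)=\tfrac{e^w-1}{w}$ is just a repackaging of the paper's explicit series $\sum_{n\ge 1}\tfrac{(z-1)^{n-1}\psi(z)^n}{n!}$; the content is identical.
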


\begin{proof}
Lemma~\ref{lem:AD} yields
$$ \log D_\chi(z) = P_S + g_1(z),$$
where $g_1(z)$ is holomorphic on $\{\Real(z)\ge 1\}$.
Combining this with the assumption yields
\begin{equation}\label{eq:middle-regular}
\log D_\chi(z) = \log \frac{1}{z-1} + k(z),
\end{equation}
where $k(z)$ is continuous on $\{\Real(z)\ge 1\}$ and satisfies that the
difference quotient
$\psi(z) := \frac{k(z)-k(1)}{z-1}$
also defines a continuous function on $\{\Real(z)\ge 1\}$.
An elementary calculation yields
$$ \exp(k(z)) = \exp(k(1)) + (z-1) \exp(k(1)) \left(\sum_{n=1}^\infty \frac{(z-1)^{n-1}\psi(z)^n}{n!} \right).$$
Note that the series on the right hand side defines a continuous function on $\{\Real(z)\ge 1\}$.
Putting $a=\exp(k(1))$ and combining the previous calculation
with Equation~\eqref{eq:middle-regular} finishes the proof.
\end{proof}

We now use the following version of the famous Wiener-Ikehara theorem taken
from~\cite{Ko} in order to conclude the existence of natural density instead of
`only' Dedekind-Dirichlet density in some cases.

\begin{thm}[Wiener-Ikehara]\label{thm:wiener-ikehara}
Let $(a_n)_n$ be a sequence of real numbers satisfying:
\begin{enumerate}
\itemsep=0cm plus 0pt minus 0pt
\parsep=0.0cm
\item $a_n\geq 0$ for all $n\in \mathbb{N}$.

\item $\sum_{n\geq 1} \frac{a_n}{n^z}$ converges for $\Real z>1$.

\item There exists $a\in \mathbb{C}$, $g(z)$ continuous on $\{\Real z\geq 1\}$ such that
\begin{equation*}
 \sum_{n\geq 1}\frac{a_n}{n^z}=\frac{a}{z-1} + g(z) \text{ for all }z\in \{\Real z >1\}.
\end{equation*}

\item There exists $C>0$ such that, for all $n\in \mathbb{N}$, $\sum_{k=1}^n a_k\leq Cn$.
\end{enumerate}
Then
\begin{equation*}
 \lim_{n\rightarrow \infty}\frac{\sum_{k=1}^n a_k}{n}=a.
\end{equation*}
\end{thm}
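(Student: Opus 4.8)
\medskip
\noindent\textbf{Proof plan.}
The plan is to deduce this from the analytic input by the classical Fej\'er-kernel Tauberian argument. First I would rephrase the conclusion in terms of $A(x) := \sum_{n \le x} a_n$. By hypotheses~(1) and~(4), $A$ is non-decreasing and satisfies $A(x) \le Cx$ for all $x \ge 1$, and the assertion is exactly $A(x)/x \to a$. Abel summation gives $\sum_{n \ge 1} a_n n^{-z} = z \int_1^\infty A(x)\, x^{-z-1}\,\dif x$ for $\Real z > 1$; combining this with $\int_1^\infty a x\cdot x^{-z-1}\,\dif x = \tfrac{a}{z-1}$ and substituting $x = e^t$, one finds, with $\phi(t) := A(e^t)e^{-t} - a$,
\begin{equation*}
\int_0^\infty \phi(t)\, e^{-(z-1)t}\,\dif t \;=\; \frac{g(z) - a}{z}, \qquad \Real z > 1 .
\end{equation*}
By hypothesis~(3) the right-hand side extends to a continuous function on $\{\Real z \ge 1\}$; set $G(y) := \frac{g(1+iy) - a}{1+iy}$. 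Note that $\phi$ is bounded: $\phi(t) \ge -a$ since $A \ge 0$ (hypothesis~(1)), and $\phi(t) \le C - a$ since $A(x) \le Cx$ (hypothesis~(4)).

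Next I would bring in the Fej\'er kernel $K_\lambda$ ($\lambda > 0$): a non-negative function on $\RR$ with $\int_\RR K_\lambda = 1$, concentrating its mass near $0$ as $\lambda \to \infty$, whose Fourier transform equals $1 - |y|/(2\lambda)$ on $[-2\lambda, 2\lambda]$ and $0$ outside. For $\epsilon > 0$, multiplying the boundary identity at $z = 1 + \epsilon + iy$ by $\tfrac{1}{2\pi}\bigl(1 - |y|/(2\lambda)\bigr) e^{ixy}$ and integrating over $y \in [-2\lambda, 2\lambda]$, the convolution theorem rewrites the left-hand side as $\int_0^\infty \phi(t)\, e^{-\epsilon t}\, K_\lambda(x - t)\,\dif t$. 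Now I let $\epsilon \to 0^+$: on the frequency side the integrand converges uniformly on the \emph{compact} interval $[-2\lambda, 2\lambda]$ by continuity of $g$ up to $\Real z = 1$, so that integral tends to $\tfrac{1}{2\pi}\int_{-2\lambda}^{2\lambda}\bigl(1 - |y|/(2\lambda)\bigr) G(y)\, e^{ixy}\,\dif y$; on the physical side one writes $\phi = (\phi + a) - a$ with $\phi + a = A(e^t)e^{-t} \ge 0$, applies monotone convergence to the non-negative part and dominated convergence to the constant part. One thus obtains, for each fixed $\lambda$,
\begin{equation*}
\int_0^\infty \phi(t)\, K_\lambda(x - t)\,\dif t \;=\; \frac{1}{2\pi}\int_{-2\lambda}^{2\lambda}\Bigl(1 - \frac{|y|}{2\lambda}\Bigr) G(y)\, e^{ixy}\,\dif y ,
\end{equation*}
and the right-hand side tends to $0$ as $x \to \infty$ by the Riemann--Lebesgue lemma, since $\bigl(1 - |y|/(2\lambda)\bigr)G(y)$ is continuous, hence integrable, on $[-2\lambda, 2\lambda]$.

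Finally I would run the Tauberian endgame. Monotonicity of $A$ forces $\phi$ to vary slowly in a multiplicative sense: for $u \ge 0$ one has $\phi(t+u) \ge e^{-u}\bigl(\phi(t) + a\bigr) - a$ and $\phi(t-u) \le e^{u}\bigl(\phi(t) + a\bigr) - a$. Suppose $\limsup_{t \to \infty} \phi(t) = 2\beta > 0$ and choose $t_n \to \infty$ with $\phi(t_n) \to 2\beta$; then for $\delta$ small and $n$ large, $\phi \ge \beta$ on $[t_n, t_n + \delta]$, while $\phi \ge -a$ everywhere. Evaluating the displayed identity at $x = t_n + \delta/2$ and taking $\lambda$ so large that $K_\lambda$ places mass at least $1 - \tfrac{\beta}{2(\beta + a)}$ inside $(-\delta/2, \delta/2)$ yields $\int_0^\infty \phi(t)\, K_\lambda(t_n + \tfrac{\delta}{2} - t)\,\dif t \ge \beta/2$ for all large $n$, contradicting the convergence to $0$. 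Hence $\limsup_{t\to\infty} \phi \le 0$; the symmetric argument gives $\liminf_{t\to\infty} \phi \ge 0$, so $\phi(t) \to 0$, i.e.\ $A(x)/x \to a$. I expect the two delicate points to be the rigorous justification of the $\epsilon \to 0^+$ interchange (keeping the uniform-convergence estimate on the frequency side compatible with the monotone/dominated convergence estimates on the physical side) and the parameter bookkeeping in the final kernel-concentration step; the rest is routine.
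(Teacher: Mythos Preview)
The paper does not prove this theorem at all: it is stated as a quotation from Korevaar's article~\cite{Ko} and used as a black box to derive Corollary~\ref{cor:natural-regular}. So there is no ``paper's own proof'' to compare against.

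That said, your sketch is essentially the standard Wiener--Ikehara argument via the Fej\'er kernel, and it is correct in outline; in fact it is close to what Korevaar himself does. A couple of small remarks. First, you silently use $a \ge 0$ in the endgame (to bound $\phi \ge -a$ from below and make the mass-splitting inequality go the right way); this is true here because the $a_n$ are non-negative, so $(z-1)\sum a_n n^{-z} \ge 0$ for real $z>1$ and hence the limit $a$ is a non-negative real number, but it is worth saying explicitly. Second, in the $\epsilon \to 0^+$ step the integral on the physical side is over $[0,\infty)$ rather than all of~$\RR$, so the total $K_\lambda$-mass seen is $\int_{-\infty}^{x} K_\lambda$, not~$1$; for $x = t_n + \delta/2 \to \infty$ this tends to~$1$ and the discrepancy is harmless, but again worth a sentence. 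The two points you flag as delicate (the $\epsilon \to 0^+$ interchange and the kernel-concentration bookkeeping) are exactly the places where care is needed, and your treatment of them is adequate for a proof plan.
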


The hard assumption in our case is 3; it is a strong form of Dedekind-Dirichlet density.
The conclusion of Proposition~\ref{prop:middle-regular} is that this strong form
holds under the assumptions of that proposition.
Thus we obtain from the Wiener-Ikehara Theorem~\ref{thm:wiener-ikehara}:

\begin{cor}\label{cor:natural-regular}
Assume the set-up of Proposition~\ref{prop:middle-regular}.
Let $A := \{n \in \NN : \chi(n) \neq 0\}$.
Then $A$ has a natural density, which is equal to~$a > 0$.
\end{cor}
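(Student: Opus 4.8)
The plan is to verify the four hypotheses of the Wiener--Ikehara Theorem~\ref{thm:wiener-ikehara} for the sequence $a_n := \chi(n)$, where $\chi$ is the multiplicative function characteristic on~$S$ supplied by Proposition~\ref{prop:middle-regular}. Since $\chi$ takes values in $\{0,1\}$, condition~(1) is immediate, and $\sum_n \chi(n) n^{-z} = D_\chi(z)$ converges on $\{\Real z > 1\}$ by the bound $|D_\chi(z)| \le \zeta(\Real z)$, which gives condition~(2).

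For condition~(3), I would invoke the conclusion of Proposition~\ref{prop:middle-regular} directly: under the stated hypothesis on~$S$ one has $D_\chi(z) = \frac{a}{z-1} + h(z)$ with $a = \exp(k(1)) > 0$ and $h$ continuous on $\{\Real z \ge 1\}$. This is exactly the strong form of Dedekind--Dirichlet density required, so condition~(3) holds with the very constant~$a$ that appears in the corollary's conclusion.

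The remaining point is condition~(4), the Tauberian-type growth bound $\sum_{k=1}^n \chi(k) \le Cn$. Since $\chi(k) \in \{0,1\}$ we have the trivial estimate $\sum_{k=1}^n \chi(k) \le \#\{k \le n\} = n$, so the hypothesis holds with $C = 1$. (Alternatively, $\sum_{k \le n} \chi(k) \le \#\{k \le n : \chi(k) \neq 0\} \le n$.) Thus all four hypotheses are in force, and the Wiener--Ikehara Theorem yields $\lim_{n \to \infty} \frac{1}{n}\sum_{k=1}^n \chi(k) = a$. But $\sum_{k=1}^n \chi(k) = \#\{k \le n : \chi(k) = 1\} = \#\{k \le n : k \in A\}$, so this limit is precisely the natural density $\mathrm{d}(A)$ in the sense of Definition~\ref{defi:Dirichlet}, and it equals $a > 0$.

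There is essentially no obstacle here: the substantive analytic work was already done in Proposition~\ref{prop:middle-regular} (deriving the sharp pole-plus-continuous decomposition of $D_\chi$) and in the statement of the Wiener--Ikehara theorem itself; the corollary is just a matter of checking that the elementary hypotheses (1), (2), (4) hold for a $\{0,1\}$-valued multiplicative function and then reading off the conclusion. If any care is needed, it is only in making explicit that $\sum_{k=1}^n \chi(k)$ counts exactly the elements of $A$ up to~$n$, so that the Wiener--Ikehara conclusion translates verbatim into the assertion about $\mathrm{d}(A)$.
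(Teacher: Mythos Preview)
Your proposal is correct and follows exactly the approach the paper takes: the paper simply remarks that the conclusion of Proposition~\ref{prop:middle-regular} is precisely hypothesis~(3) of the Wiener--Ikehara Theorem~\ref{thm:wiener-ikehara}, and then reads off the corollary. Your write-up just makes explicit the (trivial) verification of hypotheses~(1), (2), and~(4) for a $\{0,1\}$-valued sequence, which the paper leaves implicit.
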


\subsection{An application: regularity yields natural density}

In this section, we utilise the following theorem of Delange in order to strengthen
Proposition~\ref{prop:weak-equi} to natural density under the assumption of regularity.

\begin{thm}[\cite{Delange}]\label{thm:delange}
Let $f:\mathbb{N}\rightarrow \mathbb{C}$ be a multiplicative arithmetic function, satisfying:
\begin{enumerate}
\itemsep=0cm plus 0pt minus 0pt
\parsep=0.0cm
\item $\vert f(n)\vert \leq 1$  for all $n\in \mathbb{N}$.
\item There exists $a\in \mathbb{C}$, $a\not=1$ such that
$\lim_{x\rightarrow \infty}\frac{\sum_{p\leq x, p\text{ prime}}f(p)}{\pi(x)} = a$.
\end{enumerate}
Then
\begin{equation*}
 \lim_{x\rightarrow \infty}\frac{\sum_{n\leq x}f(n)}{x}=0.
\end{equation*}
\end{thm}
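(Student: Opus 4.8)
\emph{Proof proposal.} The statement is Delange's, quoted here from~\cite{Delange}; what follows is only a sketch of the route one would take to prove it. The plan is to reduce the assertion to the Hal\'asz--Wirsing theory of mean values of multiplicative functions, which is in essence the structural content behind Delange's own complex-analytic argument. As orientation, set $F(z) := D_f(z) = \sum_{n \ge 1} f(n) n^{-z}$. Arguing as in the proof of Lemma~\ref{lem:AD} (which uses only $|f(p^k)| \le 1$) one obtains $\log F(z) = \sum_p f(p) p^{-z} + G(z)$ with $G$ holomorphic on $\{\Real(z) > 1/2\}$, while partial summation converts hypothesis~(2), together with the prime number theorem, into $\sum_p f(p) p^{-\sigma} = (a + o(1)) \log\frac{1}{\sigma - 1}$ as $\sigma \to 1^+$ along the reals; since $|f| \le 1$ forces $|a| \le 1$, and then $a \ne 1$ gives $\Real(a) < 1$, it follows that $F(\sigma)/\zeta(\sigma) \to 0$. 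This already shows that the Dedekind--Dirichlet-type limit $\lim_{\sigma \to 1^+} F(\sigma)/\zeta(\sigma)$ vanishes; but because $f$ need not be nonnegative and no information is available on the line $\{\Real(z) = 1\}$, the Wiener--Ikehara Theorem~\ref{thm:wiener-ikehara} cannot be applied directly, and a deeper input is unavoidable.

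That input is Hal\'asz's theorem, which gives $\frac{1}{x}\sum_{n \le x} f(n) \to 0$ \emph{unless} $f$ is $n^{i\tau}$-pretentious for some $\tau \in \RR$, that is, $\sum_p \frac{1 - \Real(f(p) p^{-i\tau})}{p} < \infty$. In that case the twist $g(n) := f(n) n^{-i\tau}$ is multiplicative, bounded by~$1$, and pretentious to the constant function~$1$, so Hal\'asz--Wirsing yield $\frac{1}{x}\sum_{n \le x} g(n) \to c$, where $c$ is the (possibly vanishing) Euler product $\prod_p (1 - \tfrac{1}{p})\big(1 + \tfrac{g(p)}{p} + \tfrac{g(p^2)}{p^2} + \cdots\big)$, and hence $\frac{1}{x}\sum_{n \le x} f(n) = \frac{x^{i\tau}}{1 + i\tau}\, c + o(1)$. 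If $c = 0$ we are done, so we may assume $c \ne 0$; then this limit exists precisely when $\tau = 0$, in which case it equals~$c$.

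Next I would rule out the sub-case $\tau = 0$. There $\sum_p \frac{1 - \Real f(p)}{p} < \infty$, while hypothesis~(2) gives $\frac{1}{\pi(x)}\sum_{p \le x}\big(1 - \Real f(p)\big) \to 1 - \Real a$, which is \emph{strictly positive} since $a \ne 1$ and $|a| \le 1$ force $\Real a < 1$. But whenever $b_p := 1 - \Real f(p) \ge 0$ satisfies $\frac{1}{\pi(x)}\sum_{p \le x} b_p \to L > 0$, comparison by partial summation with $\sum_{p \le x} \frac{1}{p} \sim \log\log x$ forces $\sum_p \frac{b_p}{p} = \infty$ --- contradicting $1$-pretentiousness. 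Hence $\tau = 0$ cannot occur, and it remains only to exclude the sub-case $\tau \ne 0$ (with $c \ne 0$), in which the mean value of~$f$ would fail to exist.

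This last sub-case is the main obstacle. Here $c \ne 0$ additionally forces $\sum_p \frac{\operatorname{Im} g(p)}{p}$ to converge --- so that the product defining~$c$ converges to a nonzero value --- whereas partial summation rewrites hypothesis~(2) as $\frac{1}{\pi(x)}\sum_{p \le x} g(p) = \frac{a\, x^{-i\tau}}{1 - i\tau} + o(1)$, which, since $\tau \ne 0$ and $a \ne 0$ (the latter being checked first, by a variant of the argument just given), does not converge. One must then convert the non-convergence of this prime-counting-weighted average into a contradiction with the convergence of the two $\tfrac{1}{p}$-weighted sums above. It is exactly this reconciliation of the two modes of averaging that is delicate: the implication fails for arbitrary bounded summands --- a sparse set of primes of convergent reciprocal sum can still have an oscillating relative counting density --- so one genuinely exploits that the limit $a$ is \emph{assumed} to exist, and it is here that the full quantitative strength of Hal\'asz's theorem (not merely its qualitative dichotomy) is needed. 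Granting this step, every case yields $\lim_{x \to \infty} \frac{1}{x}\sum_{n \le x} f(n) = 0$.
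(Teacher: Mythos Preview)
The paper does not prove Theorem~\ref{thm:delange} at all: it is quoted from~\cite{Delange} and used as a black box in the proof of Proposition~\ref{prop:consequence-Delange}. So there is no ``paper's own proof'' to compare against; Delange's original 1961 argument is complex-analytic and Tauberian, predating Hal\'asz's work.

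Your route via the Hal\'asz--Wirsing dichotomy is the modern way to view this result and is in principle sound, but your sketch is \emph{explicitly incomplete}: you yourself flag the sub-case $\tau \ne 0$, $c \ne 0$ as ``the main obstacle'' and conclude with ``Granting this step''. That is a genuine gap, not a routine omission. The difficulty you identify is real --- convergence of the $1/p$-weighted sums $\sum_p (1-\Real g(p))/p$ and $\sum_p \operatorname{Im} g(p)/p$ does not by itself pin down the behaviour of the $\pi(x)$-normalised average of $g(p)$, and the partial-summation formula you write for $\frac{1}{\pi(x)}\sum_{p\le x} g(p)$ needs justification (the integral term coming from Abel summation does not obviously reduce to $\frac{a\,x^{-i\tau}}{1-i\tau}$ without further control). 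A cleaner way to close this case is to observe that $n^{i\tau}$-pretentiousness with $\tau\ne 0$ forces, by the same partial-summation argument you used for $\tau=0$, that $\frac{1}{\pi(x)}\sum_{p\le x}\big(1-\Real(f(p)p^{-i\tau})\big)\to 0$ along a subsequence; combined with hypothesis~(2) and the prime number theorem this contradicts the known non-existence of $\lim_{x\to\infty}\frac{1}{\pi(x)}\sum_{p\le x}p^{-i\tau}$ for $\tau\ne 0$ --- but making this precise still requires care, and as written your proposal does not supply it.

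In short: the paper gives no proof, your alternative approach is reasonable but stops short of a complete argument at exactly the point where the real work lies.
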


\begin{prop}\label{prop:consequence-Delange} Let $\mathbb{P}=\mathbb{P}_{=0}\cup\mathbb{P}_{>0}\cup\mathbb{P}_{<0}$ be a partition of the set of all primes into three sets with natural density, such that  $\mathbb{P}_{=0}$ is regular  of density~$0$ and the natural density of $\mathbb{P}_{<0}$ is not zero.
Let $\psi:\mathbb{N}\rightarrow \{0, 1, -1\}$ be a multiplicative arithmetic function such that, for every prime $p$, $\psi(p)=0$ (resp. $\psi(p)=1$, $\psi(p)=-1$) if and only if $p\in\mathbb{P}_{=0}$ (resp.\ $p\in\mathbb{P}_{>0}$, $p\in\mathbb{P}_{<0}$).

Then  $\{n: \psi(n)> 0\}$ and $\{n: \psi(n)< 0\}$ have a natural density, which for both
is $1/2$ of the natural density of $\{n:\psi(n)\not=0\}$.
\end{prop}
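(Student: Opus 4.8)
The plan is to follow the pattern of Proposition~\ref{prop:weak-equi}, but to upgrade the two analytic inputs: in place of weak regularity I would use Corollary~\ref{cor:natural-regular} (hence, via Proposition~\ref{prop:middle-regular}, the Wiener--Ikehara theorem) to obtain the \emph{natural} density of the total set $\{n:\psi(n)\neq 0\}$, and I would apply Delange's Theorem~\ref{thm:delange} directly to $\psi$ to show that the signed counting function $\sum_{n\le x}\psi(n)$ is $o(x)$.

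Concretely, set $\chi:=|\psi|\colon\NN\to\{0,1\}$, which is multiplicative and characteristic on $S:=\PP_{>0}\cup\PP_{<0}=\PP\setminus\PP_{=0}$, and write $A:=\{n:\psi(n)\neq 0\}=\{n:\chi(n)=1\}$. The first step is to see that $S$ is regular of density~$1$: the set $\PP$ of all primes is regular of density~$1$ (the classical statement $P_\PP(z)=\log\frac{1}{z-1}+g_0(z)$ with $g_0$ holomorphic on a neighbourhood of $\{\Real(z)\ge 1\}$, which follows from Lemma~\ref{lem:AD} for $\chi=1$ and the non-vanishing of $\zeta$ on the line $\Real(z)=1$), while $\PP_{=0}$ is regular of density~$0$, so $P_{\PP_{=0}}$ is holomorphic on $\{\Real(z)\ge 1\}$ and $P_S=P_\PP-P_{\PP_{=0}}$ has the required form. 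In particular $S$ satisfies the hypothesis of Proposition~\ref{prop:middle-regular} (which, as noted there, is implied by regularity); that proposition gives $D_\chi(z)=\frac{a}{z-1}+h(z)$ with $0<a\in\RR$ and $h$ continuous on $\{\Real(z)\ge 1\}$, and then Corollary~\ref{cor:natural-regular} shows that $A$ has natural density~$a>0$, i.e.\ $\sum_{n\le x}|\psi(n)|=ax+o(x)$.

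Next I would apply Theorem~\ref{thm:delange} to the multiplicative function $\psi$ itself: it satisfies $|\psi(n)|\le 1$, and \[\frac{\sum_{p\le x,\ p\text{ prime}}\psi(p)}{\pi(x)}=\frac{\pi_{\PP_{>0}}(x)}{\pi(x)}-\frac{\pi_{\PP_{<0}}(x)}{\pi(x)}\longrightarrow \mathrm{d}(\PP_{>0})-\mathrm{d}(\PP_{<0})=:a',\] the limit existing because $\PP_{>0}$ and $\PP_{<0}$ have natural densities. Since $\mathrm{d}(\PP_{=0})=0$ we have $\mathrm{d}(\PP_{>0})+\mathrm{d}(\PP_{<0})=1$, whence $a'=1-2\,\mathrm{d}(\PP_{<0})<1$ as $\mathrm{d}(\PP_{<0})\neq 0$; in particular $a'\neq 1$, so Delange's theorem yields $\frac1x\sum_{n\le x}\psi(n)\to 0$. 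Combining this with the previous step and using $\psi(n)\in\{-1,0,1\}$, for $B_\pm:=\{n:\psi(n)=\pm 1\}$ we have $\#\{n\le x:n\in B_\pm\}=\tfrac12\bigl(\sum_{n\le x}|\psi(n)|\pm\sum_{n\le x}\psi(n)\bigr)$; dividing by $x$ and letting $x\to\infty$ gives that $B_+$ and $B_-$ each have natural density $a/2=\tfrac12\,\mathrm{d}(A)$, as claimed.

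I expect the only genuinely delicate point to be the passage to \emph{natural} density rather than just Dedekind--Dirichlet density --- which is exactly why $\PP_{=0}$ is assumed \emph{regular} and not merely weakly regular: it is the regularity of $S$ that feeds Proposition~\ref{prop:middle-regular} and the Wiener--Ikehara theorem to pin down the natural density of $A$, while Delange's theorem, which governs the balancing between $B_+$ and $B_-$, only requires the natural densities of $\PP_{>0}$ and $\PP_{<0}$. One should also be careful not to skip the small but essential verification that $a'\neq 1$, where the hypothesis $\mathrm{d}(\PP_{<0})\neq 0$ enters.
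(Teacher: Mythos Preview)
Your proposal is correct and follows essentially the same approach as the paper: apply Delange's Theorem~\ref{thm:delange} to $\psi$ to get $\sum_{n\le x}\psi(n)=o(x)$, apply Corollary~\ref{cor:natural-regular} to $|\psi|$ (using that $\PP\setminus\PP_{=0}$ is regular of density~$1$) to get the natural density of $\{n:\psi(n)\neq 0\}$, and then add/subtract. You are in fact more explicit than the paper about why $S=\PP\setminus\PP_{=0}$ is regular of density~$1$, a point the paper leaves implicit when it invokes Corollary~\ref{cor:natural-regular}.
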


\begin{proof} We want to apply Delange's Theorem~\ref{thm:delange}
with $f= \psi$. The first condition is trivially satisfied. Concerning the second condition, note that
\begin{equation*}
 \sum_{p\leq x, p\text{ prime}}f(p)=\#\{p\leq x: p\in \mathbb{P}_{>0}\} - \#\{p\leq x: p\in \mathbb{P}_{<0}\},
\end{equation*}
thus
\begin{equation*}\lim_{x\rightarrow \infty}\frac{\sum_{p\leq x, p\text{ prime}}f(p)}{\pi(x)}=
\lim_{x\rightarrow \infty}\left(\frac{\#\{p\leq x: p\in \mathbb{P}_{>0}\}}{\pi(x)} -\frac{\#\{p\leq x: p\in \mathbb{P}_{<0}\}}{\pi(x)}\right) \end{equation*} exists because both $\mathbb{P}_{>0}$ and $\mathbb{P}_{<0}$ have natural density by hypothesis, and since the natural density of $\mathbb{P}_{<0}$ is not zero, the limit does not equal $1$. Therefore the second condition is also satisfied.
As a conclusion, we obtain that
\begin{equation*}
 \lim_{x\rightarrow \infty}\frac{\sum_{n\leq x}\psi(n)}{x}=0.
\end{equation*}
In other words,
\begin{equation}\label{eq:parasumar1}
 \lim_{x\rightarrow \infty}\frac{\#\{n\leq x: \psi(n)>0\} - \#\{n\leq x: \psi(n)<0\}}{x}=0.
\end{equation}
Note that $\vert \psi\vert$ is characteristic on $\PP\setminus\PP_{=0}$,
thus by Corollary~\ref{cor:natural-regular} the set
$\{n:\psi(n)\neq 0\}$ has a natural density, call it~$a$.
Therefore
\begin{equation}\label{eq:parasumar2}
 \lim_{x\rightarrow \infty}\frac{\#\{n\leq x: \psi(n)>0\} + \#\{n\leq x: \psi(n)<0\}}{x} =a.
\end{equation}
Adding and substracting \eqref{eq:parasumar1} and \eqref{eq:parasumar2} we obtain that both limits
\begin{equation*}
 \lim_{x\rightarrow \infty}\frac{\#\{n\leq x: \psi(n)> 0\}}{x}\text{ and }\lim_{x\rightarrow \infty}  \frac{\#\{n\leq x: \psi(n)<0\}}{x}
\end{equation*}
exist, and by \eqref{eq:parasumar1} they coincide.
\end{proof}

\section{Sato-Tate Conjecture with error terms}\label{sec:ST}

In this section we collect some known results about the distribution of Fourier coefficients
of modular eigenforms and for the CM case we provide a proof of an error bound similar
to the one in the prime number theorem.

\subsection{Statements}

A sequence $(x_n)_{n \in \NN} \subseteq [-1,1]$ is said to be
{\em $\mu$-equidistributed} (see \cite{KN}, Chapter~3, Def.~1.1)
for a nonnegative regular normed Borel measure~$\mu$ on $[-1,1]$
if for all continuous functions $f: [-1,1] \to \RR$
$$ \lim_{N \to \infty} \frac{1}{N} \sum_{n=1}^N f(x_n) = \int_{-1}^1 f d\mu.$$
Let $k, N\in \mathbb{N}$, and let $f\in S_k(\Gamma_0(N))$ be a normalised cuspidal modular eigenform. Let $f(z)=\sum_{n=1}^{\infty}a_n q^n$ be the Fourier expansion of $f$ at infinity. Since $f$ has trivial character, $a_n\in \RR$ for all $n\in \NN$ and by the Ramanujan-Petersson bounds
$ \vert a_p\vert \leq 2p^{(k-1)/2}$.
It is then natural to study the distribution of $\frac{a_p}{2p^{(k-1)/2}}$ in the interval $[-1, 1]$ as $p$ runs through the prime numbers. It turns out that these values are equidistributed, but the distribution is quite different according to whether the modular eigenform has complex multiplication or not.
The \emph{Sato-Tate measure}, denoted $\mu_{\mathrm{ST}}$, and the
\emph{Sato-Tate measure in the CM case}, denoted $\mu_{\mathrm{CM}}$,
are the probability measures defined on $[-1, 1]$ by the following expressions:
for every Borel-measurable set $A$:
\begin{equation*}
\mu_{\mathrm{ST}}(A) :=\frac{2}{\pi} \int_A \sqrt{1 - t^2} \dif t \textnormal{ and }
\mu_{\mathrm{CM}}(A) :=\frac{1}{2} \delta_0(A) +  \frac{1}{2\pi}\int_A \frac{1}{\sqrt{1 - t^2}} \mathrm{d}t,
\end{equation*}
where $\delta_0$ denotes the Dirac measure at zero.

The Sato-Tate conjecture, now a theorem (cf.~\cite{BLGHT}), asserts that if $f$ has no CM, the real numbers $\frac{a_p}{2p^{(k-1)/2}}$ are equidistributed in $[-1, 1]$ with respect to the measure $\mu_{\mathrm{ST}}$ as $p$ runs through the primes.
Instead of equidistribution in the sense of its definition, we are rather interested
in the set of primes defined by the condition
\begin{equation*}
 S_I:=\{p\in \PP: \frac{a_p}{2p^{(k-1)/2}}\in I\},
\end{equation*}
where $I\subseteq [-1, 1]$ is a subinterval (open, closed or half-open) of $[-1,1]$.
The Sato-Tate conjecture implies that $S_I$ has a natural density equal to $\mu_{\mathrm{ST}}(I)$.
If $f$ has CM it follows from the equidistribution of the values of Hecke characters that
$S_I$ has a natural density equal to $\mu_{\mathrm{CM}}(I)$.
Theorem~1.2 in Chapter~3 of~\cite{KN} can be used to show that also in this case the
values $\frac{a_p}{2p^{(k-1)/2}}$ are $\mu_{\mathrm{CM}}$-equidistributed in the sense
of the definition; but note that from $\mu_{\mathrm{CM}}$-equidistribution alone one may not
conclude anything on the natural density of $S_I$ if the boundary of $I$ has positive mass.

In Section~\ref{sec:BK} we need some knowledge of the speed of the convergence of the quotient
\begin{equation}\label{eq:quotient}
 \frac{\#\{p\in \PP: p\leq x \textnormal{ and }p\in S_I\}}{\pi(x)}
\end{equation} to its limit.
In the CM case the following theorem provides such an error term, which
follows from the work of Hecke on equidistribution of the values of Hecke characters.
Since we did not find a reference with the precise statement as above
(the result of equidistribution of the values of Hecke characters with
an error term only seems to have been published in 1998 in~\cite{Rajan1998}),
we include a proof in this section with and without assuming the Generalised
Riemann Hypothesis.

\begin{thm}\label{thm:statements}
Let $k, N\in \mathbb{N}$, and let $f\in S_k(\Gamma_0(N))$ be a normalised cuspidal modular eigenform with Fourier expansion $f(z)=\sum_{n=0}^{\infty}a_n q^n$.
Assume that $f$ has CM.
\begin{enumerate}[(a)]
\item\label{a-thm:statements} Then there exists a constant $c_1>0$ (depending only on $f$) such that, for all subintervals (open, closed, or half-open) $I\subseteq [-1, 1]$,
\begin{equation*}
\# \{p \textnormal{ prime}: p\leq x,  \frac{a_p}{2p^{(k-1)/2}} \in I\}
=\mu_{\mathrm{CM}}(I)\pi(x) + O(x\exp(-c_1 \sqrt{\log x})),
\end{equation*}
where the implied constant depends only on~$f$.
\item\label{b-thm:statements} Assume the Generalised Riemann Hypothesis for all powers of the Hecke character underlying~$f$
(see Section~\ref{sec:HeckeCharacters}).
Then for all subintervals (open, closed, or half-open) $I\subseteq [-1, 1]$
and all $\epsilon>0$
\begin{equation*}
\# \{p \textnormal{ prime}: p\leq x,  \frac{a_p}{2p^{(k-1)/2}} \in I\}
=\mu_{\mathrm{CM}}(I)\pi(x) + O(x^{1/2+\epsilon}).
\end{equation*}
\end{enumerate}
\end{thm}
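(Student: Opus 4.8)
The plan is to reduce the statement, via the classical CM structure recalled in Section~\ref{sec:HeckeCharacters}, to two independent counting problems. Let $K$ be the imaginary quadratic field by which $f$ has CM; then $f$ is attached to a Hecke character $\psi$ of $K$ with $|\psi(\mathfrak{p})| = N(\mathfrak{p})^{(k-1)/2}$ for degree-one primes $\mathfrak{p}$, the trivial nebentypus forces $a_n \in \RR$, one has $a_p = 0$ for every $p$ inert in $K$, and for $p$ split, $p\mathcal{O}_K = \mathfrak{p}\overline{\mathfrak{p}}$, one has $a_p = \psi(\mathfrak{p}) + \overline{\psi(\mathfrak{p})} = 2 p^{(k-1)/2}\cos\theta_{\mathfrak{p}}$, where $\psi(\mathfrak{p}) = N(\mathfrak{p})^{(k-1)/2} e^{i\theta_{\mathfrak{p}}}$; note that $\cos\theta_{\mathfrak{p}}$ is independent of the choice of $\mathfrak{p} \mid p$, as $\theta_{\overline{\mathfrak{p}}} = -\theta_{\mathfrak{p}}$. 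I shall assume $k \ge 2$, so that $\tilde\psi := \psi/|\psi|$ has nontrivial archimedean component and $\tilde\psi^m$ is non-principal for every $m \ne 0$; the weight-one case is dihedral (the $\theta_{\mathfrak{p}}$ take only finitely many values) and is subsumed by the Chebotarev input below. Thus, up to the bounded (hence negligible) contribution of the ramified primes, the quantity $\#\{p \le x : \frac{a_p}{2p^{(k-1)/2}} \in I\}$ equals $\#\{p \le x \text{ split} : \cos\theta_{\mathfrak{p}} \in I\}$, to which one adds $\#\{p \le x \text{ inert in } K\}$ when $0 \in I$.

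The inert primes are handled by the effective Chebotarev Density Theorem~\ref{thm:EffectiveChebotarev} applied to $K/\QQ$ with $C$ the non-trivial element of $\mathrm{Gal}(K/\QQ)$: this gives $\#\{p \le x \text{ inert in } K\} = \frac12 \pi(x) + O(x\exp(-c\sqrt{\log x}))$ unconditionally, and $= \frac12 \pi(x) + O(x^{1/2}\log x)$ under the Riemann Hypothesis for $\zeta_K$, a consequence of the GRH assumed in~(\ref{b-thm:statements}). The split primes require the equidistribution of the angles $\theta_{\mathfrak{p}}$, $N\mathfrak{p} = p \le x$, in $[0,2\pi)$ \emph{with a discrepancy bound uniform over all arcs}, namely $\sup_{[\alpha,\beta)} \big| \#\{\mathfrak{p} : N\mathfrak{p} = p \le x,\ \theta_{\mathfrak{p}} \in [\alpha,\beta)\} - \frac{\beta - \alpha}{2\pi}\pi_1(x) \big| \ll x\exp(-c\sqrt{\log x})$ (resp.\ $\ll x^{1/2+\epsilon}$ under GRH), where $\pi_1(x)$ is the number of degree-one prime ideals of norm $\le x$. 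This is precisely the error term for the equidistribution of values of Hecke characters proved in~\cite{Rajan1998}, applied to $\tilde\psi$; alternatively I would prove it from the Erd\H{o}s--Tur\'an inequality, which reduces it to bounding the sums $\sum_{N\mathfrak{p} = p \le x} \tilde\psi(\mathfrak{p})^m = \sum_{N\mathfrak{p} = p \le x} e^{im\theta_{\mathfrak{p}}}$ for $1 \le m \le M$. Each such sum is controlled by the prime-ideal theorem for the Hecke $L$-function $L(s, \tilde\psi^m)$ --- entire and non-vanishing on $\Real(s) = 1$ for $m \ne 0$, and, being non-principal with nontrivial archimedean type, not a quadratic character, so the classical zero-free region applies without an exceptional zero --- yielding $\sum_{N\mathfrak{p} = p \le x} e^{im\theta_{\mathfrak{p}}} \ll x\exp(-c\sqrt{\log x})$ unconditionally and $\ll x^{1/2}(\log mx)^2$ under GRH, with constants depending only on the analytic conductor of $\tilde\psi^m$, which grows polynomially in $m$ since its finite part divides the conductor of $\psi$ and its archimedean part is $O_f(m)$. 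Choosing $M = \exp(c_1\sqrt{\log x})$ for a small $c_1 > 0$ (resp.\ $M$ a small fixed power of $x$) then balances the two terms of the Erd\H{o}s--Tur\'an bound and gives the stated discrepancy estimate.

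It remains to assemble the pieces. For a subinterval $I \subseteq [-1,1]$ the set $\{\theta \in [0,2\pi) : \cos\theta \in I\}$ is a union of at most two arcs, invariant under $\theta \mapsto -\theta$, of total length $2|J|$ with $J := \{\theta \in [0,\pi] : \cos\theta \in I\}$; since each split $p$ contributes two conjugate degree-one primes, the discrepancy bound gives $\#\{p \le x \text{ split} : \cos\theta_{\mathfrak{p}} \in I\} = \frac{|J|}{2\pi}\pi_1(x)$ plus an error of the size established above. Because $\pi_1(x) = \pi(x) + O(x\exp(-c\sqrt{\log x}))$ (Chebotarev again) and the pushforward of $\frac{d\theta}{\pi}$ on $[0,\pi]$ under $t = \cos\theta$ is $\frac{dt}{\pi\sqrt{1-t^2}}$, this main term equals $\big(\frac{1}{2\pi}\int_I \frac{dt}{\sqrt{1-t^2}}\big)\pi(x)$, which is exactly $\pi(x)$ times the mass of the continuous part of $\mu_{\mathrm{CM}}$ on $I$. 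Adding the contribution $\frac12\pi(x)$ of the inert primes when $0 \in I$, and recalling $\mu_{\mathrm{CM}}(\{0\}) = \frac12$, reconstitutes $\mu_{\mathrm{CM}}(I)\pi(x)$, with total error $O(x\exp(-c_1\sqrt{\log x}))$ (resp.\ $O(x^{1/2+\epsilon})$) uniformly in $I$; the $f$-dependence of $c_1$ and of the implied constant enters only through the conductor bounds above and through Chebotarev.

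The main obstacle is the uniform-in-$m$ effective prime-ideal theorem for the family $L(s, \tilde\psi^m)$: unconditionally one must control how the zero-free region and the error term depend on the analytic conductor of $\tilde\psi^m$ precisely enough that, after summing $\frac1m$ times the resulting bound over $m \le M = \exp(c_1\sqrt{\log x})$, the clean saving $x\exp(-c_1\sqrt{\log x})$ survives --- this is where~\cite{Rajan1998} does the heavy lifting, and is why we borrow rather than reprove that estimate. Everything else --- the Chebotarev input, the Erd\H{o}s--Tur\'an reduction, and the change of variables $t = \cos\theta$ --- is routine, although some care is needed to keep all bounds uniform in $I$, in particular for very short intervals and for intervals whose closure contains $0$.
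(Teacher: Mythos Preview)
Your proof is correct and follows essentially the same route as the paper: split into inert and split primes, handle the inert primes by the effective Chebotarev theorem for $K/\QQ$, handle the split primes via the equidistribution of Hecke-character values from~\cite{Rajan1998} (the paper's Theorem~\ref{thm:equidistr} and Remark~\ref{rem:equidistr}), and convert between $\pi_K(x)$ (or your $\pi_1(x)$) and $\pi(x)$ using Lemma~\ref{lem:cuentaK} and Chebotarev again. The only difference is cosmetic: the paper simply quotes Rajan's discrepancy estimate as a black box, whereas you additionally sketch how one would prove it via the Erd\H{o}s--Tur\'an inequality and uniform effective prime-ideal theorems for the family $L(s,\tilde\psi^m)$; this is exactly how~\cite{Rajan1998} proceeds, so it is an elaboration of the same argument rather than an alternative one. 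Your explicit remark that $k\ge 2$ ensures $\tilde\psi$ has infinite order (so that Theorem~\ref{thm:equidistr} applies), with the weight-one case handled by Chebotarev alone, is a point the paper leaves implicit.
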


Very recently, the following theorem covering the case of non-CM modular forms of squarefree
level was proved.

\begin{thm}[Rouse, Thorner]\label{thm:RT}
Let $k, N\in \mathbb{N}$ with squarefree~$N$, and let $f\in S_k(\Gamma_0(N))$ be a normalised cuspidal modular eigenform with Fourier expansion $f(z)=\sum_{n=0}^{\infty}a_n q^n$.
Assume that $f$ does not have CM.
Assume that all the symmetric power L-functions of $f$ are automorphic and satisfy
the Generalised Riemann Hypothesis.
Then for all subintervals (open, closed, or half-open) $I\subseteq [-1, 1]$,
\begin{equation*}
\# \{p \textnormal{ prime}: p\leq x,  \frac{a_p}{2p^{(k-1)/2}} \in I\}
=\mu_{\mathrm{ST}}(I)\pi(x) + O(x^{3/4}).
\end{equation*}
\end{thm}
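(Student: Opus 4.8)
The statement is quoted from the work of Rouse and Thorner; what follows is a sketch of the strategy of their proof. The plan is to reduce the counting problem to bounds on sums of Chebyshev polynomials evaluated at the Satake angles $\theta_p\in[0,\pi]$, defined by $a_p=2p^{(k-1)/2}\cos\theta_p$, and then to evaluate those sums using the analytic properties of the symmetric power $L$-functions of $f$ granted by the hypotheses. Concretely, one works with a quantitative form of the Weyl criterion (an Erd\H{o}s--Tur\'an type inequality) to produce, for a truncation parameter $M$, a bound of the shape
\begin{equation*}
\Big|\#\{p\le x:\theta_p\in[\alpha,\beta]\}-\mu_{\mathrm{ST}}(I)\,\pi(x)\Big|
\ll \frac{\pi(x)}{M}+\sum_{m=1}^{M}\frac{1}{m}\Big|\sum_{p\le x}U_m(\cos\theta_p)\Big|+R(x),
\end{equation*}
where $I=[\cos\beta,\cos\alpha]$, $U_m$ is the $m$-th Chebyshev polynomial of the second kind (so $U_m(\cos\theta)=\sin((m+1)\theta)/\sin\theta$), and $R(x)$ collects the (negligible) contributions of prime powers.

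For the first step I would pass to the angle coordinate: equidistribution of $\frac{a_p}{2p^{(k-1)/2}}$ for $\mu_{\mathrm{ST}}$ is equivalent to equidistribution of the $\theta_p$ for the measure $\frac{2}{\pi}\sin^2\theta\,\dif\theta$ on $[0,\pi]$, and the orthogonal system attached to this measure is built precisely from the $U_m(\cos\theta)$. To obtain the displayed inequality with the harmonic weights $\tfrac1m$ (rather than a trivial constant), one uses the Beurling--Selberg/Vaaler trigonometric majorant and minorant of degree $M$ for the indicator function of $[\alpha,\beta]$, whose Fourier coefficients decay like $\tfrac1m$ and whose mean equals $\mu_{\mathrm{ST}}(I)$ up to an error $O(1/M)$; integrating these against the empirical measure of the $\theta_p$ produces the bound above.

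The second and essential step is the estimation of each Weyl sum $\sum_{p\le x}U_m(\cos\theta_p)$ via $L(s,\mathrm{Sym}^m f)$. Since $f$ has no CM, $\mathrm{Sym}^m f$ is cuspidal for every $m\ge1$, so under the hypothesis that it is automorphic the associated $L$-function is entire, has an Euler product, and satisfies a functional equation; moreover, because $N$ is squarefree the local representation of $f$ at each $p\mid N$ is Steinberg, and the conductor exponent of $\mathrm{Sym}^m f$ there is explicitly known, giving an analytic conductor $\ll(kN)^{O(m)}$. The Dirichlet coefficients of $-\frac{L'}{L}(s,\mathrm{Sym}^m f)$ are, at primes, essentially $U_m(\cos\theta_p)\log p$, so applying the explicit formula (a truncated Perron integral) together with GRH for $\mathrm{Sym}^m f$ yields $\sum_{p\le x}U_m(\cos\theta_p)\log p\ll \sqrt{x}\,(m\log x+\log\mathrm{cond})\ll m\sqrt{x}\,\log(kNx)$, uniformly in $m$; partial summation removes $\log p$ at the cost of a factor $1/\log x$.

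Finally I would combine the two steps: $\sum_{m=1}^M\frac1m\cdot m\sqrt{x}\,\log(kNx)/\log x\ll M\sqrt{x}$, so the total error is $\ll \frac{x}{M\log x}+M\sqrt{x}$, and choosing $M\asymp x^{1/4}$ gives the bound $O(x^{3/4})$ (after absorbing logarithmic factors into the implied constant). The main obstacle is the \emph{uniformity in $m$} of the Weyl-sum estimate: one needs the implied constants in the explicit formula and in the conductor bounds for $\mathrm{Sym}^m f$ to grow at most polynomially (in fact linearly) in $m$, which requires careful control of the archimedean local factors and, above all, of the ramified primes dividing $N$. This is exactly where squarefreeness of the level is used, and why the argument as it stands does not cover general level.
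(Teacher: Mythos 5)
Your proposal is correct in spirit, but it tackles a different task from the one the paper actually performs.  The paper's own ``proof'' of Theorem~\ref{thm:RT} consists of a single line: ``This is an easy consequence of Theorem~1.3 of~\cite{RT}.''  What you have written is a sketch of the argument that Rouse and Thorner themselves carry out in that cited theorem, not the (trivial) reduction the paper needs.  The reduction the paper is implicitly invoking amounts to two observations: (i) Theorem~1.3 of~\cite{RT} gives a bound of the shape $\ll x^{3/4}\log(Nkx)/\log x$ with $\mathrm{Li}(x)$ in place of $\pi(x)$, and for a fixed form~$f$ the factor $\log(Nkx)/\log x$ is $O(1)$; and (ii) GRH (in particular RH for $\zeta$, which is $\mathrm{Sym}^0$) gives $\mathrm{Li}(x)-\pi(x)\ll \sqrt{x}\log x$, so replacing $\mathrm{Li}(x)$ by $\pi(x)$ costs less than the main error term.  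That is the entire content of the paper's proof.

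As a sketch of the Rouse--Thorner argument itself your outline is essentially right: pass to Satake angles, bound the indicator of an arc by Beurling--Selberg/Vaaler majorants and minorants whose Fourier--Chebyshev coefficients decay like $1/m$, estimate the resulting Weyl sums $\sum_{p\le x}U_m(\cos\theta_p)$ via the explicit formula and GRH for $L(s,\mathrm{Sym}^m f)$ with polynomial (in fact linear) control of $\log$-conductors in~$m$, and optimise the truncation $M\asymp x^{1/4}$.  You also correctly identify where squarefreeness enters: it forces the local representation at $p\mid N$ to be (a twist of) Steinberg, which makes the conductor exponent of $\mathrm{Sym}^m f$ at such $p$ explicit and of size $O(m)$, yielding $\log\mathrm{cond}(\mathrm{Sym}^m f)\ll m\log(Nk)$; without this control the uniformity in $m$ of the Weyl-sum bound would be in doubt.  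So your sketch is a valid reconstruction of the cited proof, but it substitutes for a literature citation rather than for any argument present in the paper.
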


\begin{proof}
This is an easy consequence of Theorem~1.3 of~\cite{RT}.
\end{proof}

\begin{rem}\label{rem:withGRH} For non-CM modular forms~$f$ we have not found in the literature any unconditional result for the error term in the convergence of the quotient \eqref{eq:quotient} to the natural density of $S_I$.

When $f$ is attached to an elliptic curve $E/\QQ$, if we assume analytic continuation, functional equation, and the Generalised Riemann Hypothesis for the $L$-function attached to the $m$-th symmetric power of $E$ for every $m\in \NN$, then V.\ Kumar Murty (cf.~\cite{M1985}) states the error bound
\begin{equation}\label{eq:error}
\# \{p \textnormal{ prime}: p\leq x,  \frac{a_p}{2p^{(k-1)/2}} \in I\}
=\mu_{\mathrm{ST}}(I)\pi(x) + O(x^{\frac{1}{2}+ \varepsilon}).
\end{equation}

Akiyama and Tanigawa proved a converse of this statement. Namely, they prove that, if formula \eqref{eq:error} holds for an elliptic curve $E/\QQ$ without CM, then the Generalised Riemann Hypothesis holds for the $L$-function $L(s, E)$ (cf.\ Theorem 2 of~\cite{AT}).

Jeremy Rouse informed us that he expects that a statement similar to Theorem~\ref{thm:RT} should hold in non-squarefree level.
\end{rem}

\begin{cor}\label{cor:statements}
\begin{enumerate}[(a)]
\item In the set-up of Theorem~\ref{thm:statements} part~(\ref{a-thm:statements}) the set
$\{p \textnormal{ prime}:  \frac{a_p}{2p^{(k-1)/2}} \in I\}$ is weakly regular.
\item In the set-up of Theorem~\ref{thm:statements} part~(\ref{b-thm:statements}) or
of Theorem~\ref{thm:RT} the set
$\{p \textnormal{ prime}:  \frac{a_p}{2p^{(k-1)/2}} \in I\}$ is regular.
\end{enumerate}
\end{cor}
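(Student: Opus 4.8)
The plan is to feed the error estimates of Theorems~\ref{thm:statements} and~\ref{thm:RT} into the (weak) regularity criteria of Section~\ref{sec:density}; the argument will run exactly parallel to the proof of Proposition~\ref{prop:chebotarev}, with the Chebotarev error bounds replaced by the Sato--Tate ones. Throughout I write $S_I:=\{p\textnormal{ prime}: a_p/(2p^{(k-1)/2})\in I\}$.

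For part~(a), I would first quote Theorem~\ref{thm:statements}(\ref{a-thm:statements}), which gives $\pi_{S_I}(x)=\mu_{\mathrm{CM}}(I)\pi(x)+O(x\exp(-c_1\sqrt{\log x}))$; in particular $S_I$ has natural density $\mu_{\mathrm{CM}}(I)$. Next I would divide by $\pi(x)$ and use $\pi(x)>x/(\log x+2)$ for $x\ge 55$ (Theorem~29 of~\cite{Rosser}) to bound the error function $E(x):=\pi_{S_I}(x)/\pi(x)-\mu_{\mathrm{CM}}(I)$ by $|E(x)|\le c_2(\log x+2)\exp(-c_1\sqrt{\log x})$ for all sufficiently large~$x$. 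Since $(\log x)^{\alpha+1}\exp(-c_1\sqrt{\log x})$ is bounded for every fixed $\alpha>0$ (substitute $u=\sqrt{\log x}$), this gives $|E(x)|\le C/(\log x)^{\alpha}$ for large $x$ with suitable $\alpha,C>0$, and Corollary~\ref{cor:weakly-regular} then yields that $S_I$ is weakly regular.

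For part~(b) the only change is the strength of the error term: Theorem~\ref{thm:statements}(\ref{b-thm:statements}) gives $\pi_{S_I}(x)=\mu_{\mathrm{CM}}(I)\pi(x)+O(x^{1/2+\epsilon})$ for all $\epsilon>0$, and Theorem~\ref{thm:RT} gives $\pi_{S_I}(x)=\mu_{\mathrm{ST}}(I)\pi(x)+O(x^{3/4})$, so in both cases the error is $O(x^{\theta})$ for a fixed $\theta<1$ (take $\theta=3/4$, say). Dividing by $\pi(x)$ as above produces $|E(x)|\le c(\log x+2)x^{\theta-1}$, and I would then invoke Proposition~2.2 of~\cite{IlGa} — precisely the input used in the second half of the proof of Proposition~\ref{prop:chebotarev} — to conclude that $S_I$ is regular. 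Alternatively one can re-run the proof of Proposition~\ref{prop:weakly-regular}: with this stronger bound the auxiliary function $f(z)=\int_2^\infty \frac{g(x)}{x^{z+1}}\,dx$, with $g(x)=E(x)\pi(x)$, converges absolutely and is holomorphic already on $\{\Real(z)>\theta\}$, hence on a neighbourhood of $\{\Real(z)\ge 1\}$ as $\theta<1$, which is exactly regularity.

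I do not expect a genuine obstacle here: the corollary is essentially bookkeeping, translating the analytic error bounds into the density language of Section~\ref{sec:density}. The only step that is not purely formal — and hence the point I would state most carefully — is the elementary comparison $(\log x+2)\exp(-c_1\sqrt{\log x})=O((\log x)^{-\alpha})$ used in part~(a).
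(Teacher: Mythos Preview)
Your proposal is correct and follows exactly the approach of the paper, which simply records that the result follows from Corollary~\ref{cor:weakly-regular} for part~(a) and from Proposition~2.2 of~\cite{IlGa} for part~(b). You have spelled out in detail the intermediate step (bounding $E(x)$ via Rosser's inequality) that the paper leaves implicit, in precise analogy with the proof of Proposition~\ref{prop:chebotarev}.
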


\begin{proof}
This follows respectively from Corollary~\ref{cor:weakly-regular} and \cite{IlGa}, Proposition~2.2.
\end{proof}

\begin{rem}
If $f$ is a Hecke eigenform with real Fourier coefficients $a_n$,
a natural question to study is the distribution of the signs of the $a_n$
as $n$ runs through the set of natural numbers. For $f$ of half-integral weight,
this study is the content of the Bruinier-Kohnen conjecture (see Section~\ref{sec:BK}).
Here we include the easier case of $f\in S_k(\Gamma_0(N))$ of integral weight.
We can combine the results of the previous sections with those in this section in order
to address this question.
Define the sets $\PP_{>0}$ (resp. $\PP_{<0}$, $\PP_{=0}$,  $\PP_{\not=0}$) as the set of primes such that $a_p>0$ (resp.\ $a_p<0$, $a_p=0$, $a_p\neq0$).
\begin{enumerate}[(a)]
\item Assume that $f$ has CM. By Corollary~\ref{cor:statements}, the set $\PP_{=0}$ is weakly regular of natural density equal to~$1/2$, and the sets $\PP_{>0}$ and $\PP_{<0}$ are both weakly regular of density~$1/4$.
Consider the character $\chi:\NN\rightarrow \{0, 1\}$ defined as $\chi(n)=1$ if and only if $a_n\neq 0$. 
We can apply Proposition~\ref{prop:AandS} and conclude that $\{n\in \NN: a_n\neq 0\}$ cannot have a positive Dedekind-Dirichlet density.

\item Assume now that $f$ satisfies the assumptions of Theorem~\ref{thm:RT}.
Then by Corollary~\ref{cor:statements} the sets $\PP_{=0}$, $\PP_{>0}$, and $\PP_{<0}$
are regular of natural density equal to $0$, $1/2$, $1/2$, respectively.
Thus by Proposition~\ref{prop:consequence-Delange}, $\{n \in \NN : a_n>0\}$ and
$\{n \in \NN : a_n<0\}$ have the same natural density, which equals $1/2$ of the natural density of $\{n\in \NN: a_n\neq0\}$.
\end{enumerate}
\end{rem}

We devote the rest of this section to explaining in detail how the equidistribution
of the values of the Hecke characters implies Theorem~\ref{thm:statements}.

\subsection{Hecke characters}\label{sec:HeckeCharacters}

We first set up some general notation that will below be specialised to imaginary quadratic fields.
Let $K$ be a number field of degree $g$, and let $\mathcal{O}_K$ the ring of integers of $K$. As usual, we denote $g=r_1 + 2r_2$, where $r_1$ is the number of real embeddings of $K$, and $2r_2$ is the number of complex embeddings. We will write the embeddings as $\tau_1, \dots, \tau_{g}:K\rightarrow \mathbb{C}$, where the first $r_1$ are the real embeddings, and $\tau_{\nu}$ is the complex conjugate of $\tau_{\nu + r_2}$ for all $\nu\in \{r_1, \dots, r_1 + r_2\}$.
For any fractional ideal $\mathfrak{a}$ of $K$, we denote by $v_{\mathfrak{p}}(\mathfrak{a})$ the exponent of $\mathfrak{p}$ in the factorisation of $\mathfrak{a}$ into prime ideals.
Let $I$ be the group of fractional ideals of $K$, and let us fix an integral ideal $\mathfrak{m}$ (not necessarily a maximal ideal) of the ring of integers of $K$.

\begin{defi}
Let $a, b\in K^{\times}$.
We say that $a\equiv b\,\mathrm{mod}^{\times} \mathfrak{m}$ if, for all $\mathfrak{p}\vert \mathfrak{m}$,
$v_{\mathfrak{p}}(a-b)\geq v_{\mathfrak{p}}(\mathfrak{m})$.
\end{defi}

\begin{defi}
Let $I(\mathfrak{m}):=\{\mathfrak{a}\in I: (\mathfrak{a}, \mathfrak{m})=1\}$. A character
$\xi: I(\mathfrak{m})\rightarrow \{z\in \mathbb{C}: \vert z\vert =1\}$
is called a \emph{Hecke character} mod $\mathfrak{m}$ if there exists a set of pairs of real numbers $\{(u_{\nu}, v_{\nu})$, $\nu=1, \dots, r_1 + r_2\}$, satisfying:
\begin{itemize}
\itemsep=0cm plus 0pt minus 0pt
\parsep=0cm
\item $u_{\nu}\in \mathbb{Z}$; moreover $u_\nu\in \{0, 1\}$ if $\nu\leq r_1$.
\item $\sum_{\nu=1}^{r_1 + r_2} v_{\nu}=0$.
\item  For all $a\in K^{\times}$ such that  $a\equiv 1 \,\mathrm{mod}^\times \mathfrak{m}$, 
$\xi((a))=\prod_{\nu=1}^{r_1 + r_2}\left(\frac{\tau_{\nu}(a)}{\vert \tau_{\nu}(a)\vert}\right)^{u_{\nu}} \vert \tau_{\nu}(a) \vert^{iv_\nu}$.
\end{itemize}
\end{defi}

The values of the Hecke characters are equidistributed on the unit circle: the probability that they lie on an arc is proportional to the length of the arc. This fact was already known to Hecke (cf.\ \cite{Hecke1920}). The explicit version we state below are Theorem~1 and Proposition~4 of~\cite{Rajan1998}.
We use the standard notation $\pi_K(x)=\#\{\mathfrak{p}\textnormal{ prime ideal of }K: \mathrm{Norm}_K(\mathfrak{p})\leq x\}$.

\begin{thm}\label{thm:equidistr}
Let $K$ be a number field, $\mathfrak{m}$ an integral ideal of $K$ and $\xi:I(\mathfrak{m})\rightarrow \{z\in \CC: \vert z\vert=1\}$ a Hecke character of infinite order.
\begin{enumerate}[(a)]
\item\label{a-thm:equidistr}
There exists a constant $c_1>0$ (depending only on $K$) such that, for all $\alpha,\beta \in [-\pi, \pi]$ with $\beta \le \alpha$
\begin{multline*}
\#\{\mathfrak{p}\textnormal{ prime ideal of }\mathcal{O}_K : (\mathfrak{p}, \mathfrak{m})=1, N_K(\mathfrak{p})\leq x, \mathrm{arg}(\xi(\mathfrak{p}))\in [\beta,\alpha)\}
\\ =\frac{1}{2\pi}(\alpha-\beta)\pi_K(x) + O(x\exp(-c_1\sqrt{\log x})),
\end{multline*} where the implicit constant depends only on $K$.

\item\label{b-thm:equidistr} Assume in addition that the L-functions of all powers of $\xi$
satisfy the Generalised Riemann Hypothesis. Then for all $\epsilon>0$ and all
$\alpha,\beta \in [-\pi, \pi]$ with $\beta \le \alpha$,
\begin{multline*}
\#\{\mathfrak{p}\textnormal{ prime ideal of }\mathcal{O}_K : (\mathfrak{p}, \mathfrak{m})=1, N_K(\mathfrak{p})\leq x, \mathrm{arg}(\xi(\mathfrak{p}))\in [\beta,\alpha)\}
\\ =\frac{1}{2\pi}(\alpha-\beta)\pi_K(x) + O(x^{1/2+\epsilon}),
\end{multline*}
\end{enumerate}
\end{thm}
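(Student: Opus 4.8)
\emph{Proof idea (the statement is quoted; see Theorem~1 and Proposition~4 of \cite{Rajan1998}).}
The plan is to reduce the equidistribution of the arguments $\mathrm{arg}(\xi(\mathfrak{p}))$, with the stated error terms, to a quantitative prime ideal theorem for the Hecke $L$-functions attached to the powers $\xi^m$, and to make the reduction quantitative by means of the Erd\H{o}s--Tur\'an inequality. First I would record that, since $\xi$ has infinite order, for every nonzero $m\in\ZZ$ the power $\xi^m$ is again a Hecke character mod $\mathfrak{m}$ (with infinity data $(m u_\nu, m v_\nu)$) and is itself of infinite order, hence nontrivial: if $\xi^m$ had finite order then some $\xi^{mN}$ would be trivial, contradicting $\xi$ being of infinite order. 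This nontriviality is exactly what makes the next step work.

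Next I would use that, for $m\ne0$, the Hecke $L$-function $L(s,\xi^m)=\prod_{\mathfrak{p}}\big(1-\xi^m(\mathfrak{p})N_K(\mathfrak{p})^{-s}\big)^{-1}$ has analytic continuation to an entire function, a functional equation relating $s$ and $1-s$, and analytic conductor $q_m\ll_{K,\mathfrak{m}} m$ (the finite part divides $\mathfrak{m}$, the archimedean part grows linearly in $m$). Since $\xi^m$ is non-real for $m\ne 0$, there is no exceptional (Siegel) zero, so one has the classical de la Vall\'ee Poussin zero-free region $\Real(s) > 1 - c/\log\!\big(q_m(|t|+3)\big)$ for $s=\sigma+it$, with an \emph{effective} constant $c$. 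A standard contour integration (Perron's formula for $-L'/L(s,\xi^m)$, shifting past the zero-free region, then partial summation to pass from the von Mangoldt sum over ideals to prime ideals) then yields
$$ \sum_{\substack{\mathfrak{p}\ \mathrm{prime}\\ N_K(\mathfrak{p})\le x}} \xi^m(\mathfrak{p}) = O\!\big(x\exp(-c'\sqrt{\log x}\,)\big), $$
uniformly for $1\le m\le(\log x)^A$ for any fixed $A$, the implied constant depending only on $K$ and $\mathfrak{m}$. Under GRH for all $L(s,\xi^m)$ one instead shifts to $\Real(s)=1/2$, obtaining $\sum_{N_K(\mathfrak{p})\le x}\xi^m(\mathfrak{p}) \ll x^{1/2}\big(\log(q_m x)\big)^2$, uniformly for $m$ up to a fixed power of $x$.

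Then I would invoke the Erd\H{o}s--Tur\'an inequality for the sequence $\big(\tfrac{1}{2\pi}\mathrm{arg}(\xi(\mathfrak{p}))\big)_{N_K(\mathfrak{p})\le x}$, whose discrepancy $D_x$ satisfies
$$ D_x \ll \frac{1}{M} + \frac{1}{\pi_K(x)}\sum_{m=1}^{M}\frac{1}{m}\Big|\sum_{N_K(\mathfrak{p})\le x}\xi^m(\mathfrak{p})\Big|. $$
In the unconditional case, inserting the bound above with $M=(\log x)^A$ and using $\pi_K(x)\gg x/\log x$ gives $D_x\ll\exp(-c_1\sqrt{\log x}\,)$; multiplying by $\pi_K(x)$ turns the discrepancy bound for a half-open arc $[\beta,\alpha)$ into the asserted count $\tfrac{1}{2\pi}(\alpha-\beta)\pi_K(x)+O\big(x\exp(-c_1\sqrt{\log x}\,)\big)$, proving part~(\ref{a-thm:equidistr}). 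In the GRH case, taking $M=x^{1/2}$ gives $\pi_K(x)D_x\ll x^{1/2}(\log x)^3\ll_\epsilon x^{1/2+\epsilon}$, proving part~(\ref{b-thm:equidistr}).

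The main obstacle is the uniformity in the twist $m$: one must have the prime ideal theorem (and, under GRH, the explicit formula) for $L(s,\xi^m)$ with the dependence of every implied constant on the analytic conductor $q_m$ made explicit, so that the sum over $m\le M$ in Erd\H{o}s--Tur\'an with $M$ a power of $\log x$ (resp.\ a small power of $x$) is legitimate; this is precisely the technical content of \cite{Rajan1998}. The crucial structural point making all constants effective is that the characters $\xi^m$ with $m\ne0$ are non-real, so no Siegel zero can occur. The remaining ingredients --- entireness of $L(s,\xi^m)$, the asymptotic $\pi_K(x)\sim x/\log x$, and the bookkeeping in Erd\H{o}s--Tur\'an --- are routine.
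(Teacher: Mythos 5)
The paper does not itself prove Theorem~\ref{thm:equidistr}: it quotes it as a restatement of Theorem~1 and Proposition~4 of Rajan's paper, which is the reference you also point to. Your strategy --- reduce to cancellation in the twisted prime-ideal sums $\sum_{N_K(\mathfrak{p})\le x}\xi^m(\mathfrak{p})$ via the Erd\H{o}s--Tur\'an inequality, and obtain that cancellation from a zero-free region for the entire, non-self-dual $L(s,\xi^m)$ --- is indeed the route in the cited reference, and your GRH case (with $M=x^{1/2}$) closes correctly.

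There is, however, a genuine quantitative gap in the unconditional case. You take $M=(\log x)^A$ in Erd\H{o}s--Tur\'an, but then the unavoidable $1/M$ term equals $(\log x)^{-A}$, and since
$\exp\bigl(-c\sqrt{\log x}\bigr)=o\bigl((\log x)^{-A}\bigr)$
for every fixed $A$, the $1/M$ term dominates: your argument only yields $D_x\ll(\log x)^{-A}$, which is strictly weaker than the claimed $\exp(-c_1\sqrt{\log x})$. To get the stated error term you must take $M$ of size $\exp\bigl(c''\sqrt{\log x}\bigr)$, and this in turn forces you to prove the cancellation bound $\sum_{N_K(\mathfrak{p})\le x}\xi^m(\mathfrak{p})\ll x\exp(-c'\sqrt{\log x})$ uniformly over the much longer range $m\le\exp\bigl(c''\sqrt{\log x}\bigr)$, not merely $m\le(\log x)^A$ as you claim. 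That stronger uniformity is attainable --- in that range $\log q_{\xi^m}\ll\log m\ll\sqrt{\log x}$, so the de la Vall\'ee Poussin zero-free region still has width $\gg1/\sqrt{\log x}$ and Perron's formula still saves $\exp(-c'\sqrt{\log x})$ --- but as written both the uniformity range and the choice of $M$ need to be upgraded; with $M=(\log x)^A$ the argument does not give the theorem's error term.
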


We may replace the interval $[\beta, \alpha)$ by $[\beta, \alpha]$, $(\beta, \alpha]$ or $(\beta, \alpha)$ in the statement of Theorem~\ref{thm:equidistr}.

\begin{rem}\label{rem:equidistr}
It is straightforward to translate Theorem~\ref{thm:equidistr} into the following statement on the distribution of the projections of $\xi(\mathfrak{p})$ to the real axis:
for all subintervals $I \subseteq [-1, 1]$ (open, closed, or half-open) one has
\begin{multline*}
\#\{\mathfrak{p}\textnormal{ prime ideal of }\mathcal{O}_K : (\mathfrak{p}, \mathfrak{m})=1, N_K(\mathfrak{p})\leq x, \mathrm{Re}(\xi(\mathfrak{p}))\in I\}
\\ =\left(\frac{1}{\pi}\int_I\frac{1}{\sqrt{1-t^2}}\dif t\right)\pi_K(x) + O(x\exp(-c_1\sqrt{\log x})).
\end{multline*}
Under the assumption of part~(\ref{b-thm:equidistr}) the error term is $O(x^{1/2+\epsilon})$.
\end{rem}

Assume now that $K=\QQ(\sqrt{d})$ is an imaginary quadratic field. In this case $g=2$, $r_1=0$ and $r_2=1$. Thus in this particular case, given an integral ideal $\mathfrak{m}$ of $K$ as above, a Hecke character is a character $\xi: I(\mathfrak{m})\rightarrow \{z\in \mathbb{C}: \vert z\vert =1\}$
such that, for all $a\in K^{\times}$ such that $a\equiv 1\,\mathrm{mod}^\times \mathfrak{m}$,
it holds that
$\xi((a))=\left(\frac{\tau(a)}{\vert \tau(a)\vert}\right)^{u}$
for some $u\in \ZZ$, which we may assume positive by changing the choice of the embedding $\tau$ by its conjugate, if necessary.
The next result (cf.\ Theorem 4.8.2 of~\cite{Miyake}) attaches CM modular forms to such characters:

\begin{thm}\label{Hecke}
Let $K$, $\mathfrak{m}$, $\xi$, $u$ as above. Assume $u\not=0$. Then the expression
 \begin{equation}\label{eq:Hecke}
f(z):=  \sum_{\mathfrak{a}} \xi(\mathfrak{a}) N_{K/\mathbb{Q}}(\mathfrak{a})^{u/2} q^{N_{K/\mathbb{Q}}(\mathfrak{a})}
 \end{equation}
defines a modular form $f\in S_{u+1}(N, \chi)$, where $\mathfrak{a}$ runs through all integral ideals of $K$ with $(\mathfrak{a}, \mathfrak{m})=1$, $N=\vert d \vert \mathrm{Norm}_{K}(\mathfrak{m})$ and where $\chi$ is the Dirichlet character defined as
\begin{equation}\label{eq:nebentypus}
 \chi(m)=\left(\frac{d}{m}\right)\xi((m))\mathrm{sgn}(m)^u \text{ for all }m\in \mathbb{Z}.
\end{equation}
\end{thm}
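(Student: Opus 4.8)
The plan is to verify that the Dirichlet series $f(z) = \sum_{\mathfrak a}\xi(\mathfrak a) N_{K/\QQ}(\mathfrak a)^{u/2} q^{N_{K/\QQ}(\mathfrak a)}$, where $\mathfrak a$ ranges over integral ideals of $K$ coprime to $\mathfrak m$, is the $q$-expansion of a holomorphic cusp form of the asserted weight, level and nebentypus. Since this is precisely the content of Theorem~4.8.2 of~\cite{Miyake} (or of Hecke's original construction, cf.~\cite{Hecke1920}), the proof is essentially a matter of matching our normalisation with the reference. So the first step is to set $\psi(\mathfrak a) := \xi(\mathfrak a) N_{K/\QQ}(\mathfrak a)^{u/2}$ and observe that $\psi$ is a Hecke character of $K$ of the type to which the cited theorem applies: it is a character on $I(\mathfrak m)$ whose value on a principal ideal $(a)$ with $a\equiv 1\,\mathrm{mod}^\times\mathfrak m$ equals $\tau(a)^u$ (use $|\tau(a)|^2 = N_{K/\QQ}((a))$ together with the defining relation $\xi((a)) = (\tau(a)/|\tau(a)|)^u$ from Section~\ref{sec:HeckeCharacters}). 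Thus $\psi$ has $\infty$-type $\tau^u$, and the associated theta-type series attached to $\psi$ is a modular form of weight $u+1$.

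Next I would pin down the level and character. The conductor of the finite part of $\psi$ divides $\mathfrak m$, and the standard formula for the level of the CM form attached to a Hecke character of an imaginary quadratic field $K = \QQ(\sqrt d)$ with $\infty$-type $\tau^u$ gives $N = |d_K|\cdot N_{K/\QQ}(\mathfrak f_\psi)$, which under our hypotheses divides $|d|\,\mathrm{Norm}_K(\mathfrak m)$; replacing $\mathfrak m$ by this possibly non-maximal ideal is harmless since $I(\mathfrak m)$ only shrinks. The nebentypus is computed from the transformation behaviour: the Dirichlet character $\chi$ is the product of the Kronecker symbol $\left(\frac{d}{\cdot}\right)$ coming from the quadratic field, the restriction of the finite part of $\psi$ to $(\ZZ/N)^\times$ via $m\mapsto \psi((m))$, and the sign character $\mathrm{sgn}(m)^u$ accounting for the archimedean component; this is exactly Equation~\eqref{eq:nebentypus}. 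Finally, holomorphy at the cusps (cuspidality) follows because $\psi$ has infinite order — equivalently $u\neq 0$, which is assumed — so no Eisenstein contribution appears; the weight-one subtlety does not arise since $u\geq 1$ forces weight $u+1\geq 2$.

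The main obstacle, such as it is, is bookkeeping rather than mathematics: one must be careful that the normalisation $N_{K/\QQ}(\mathfrak a)^{u/2}$ used here is consistent with whatever normalisation \cite{Miyake} adopts for the $\infty$-type (references differ by whether they attach weight $u+1$ to $\infty$-type $\tau^u$ or $\tau^{u/2}\bar\tau^{-u/2}$, etc.), and that the coprimality condition $(\mathfrak a,\mathfrak m)=1$ in the summation matches the definition of the newform-or-oldform produced by the reference. Once the dictionary between the "analytic" Hecke character $\xi$ with values on the unit circle and the "algebraic" Hecke character $\psi$ with $\infty$-type $\tau^u$ is made explicit, the statement is an immediate translation of the cited theorem, and no further argument is needed.
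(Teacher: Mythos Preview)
Your proposal is correct and aligns with the paper's treatment: the paper does not supply its own proof of this theorem but simply cites it as Theorem~4.8.2 of~\cite{Miyake}, so the relevant task is exactly the normalisation-matching you describe. Your identification of $\psi(\mathfrak a)=\xi(\mathfrak a)N_{K/\QQ}(\mathfrak a)^{u/2}$ as the algebraic Hecke character of $\infty$-type $\tau^u$ is the right dictionary, and nothing further is needed.
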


Conversely, any modular form with CM arises in this way from some Hecke character of an imaginary quadratic field (cf.\ \cite{Ribet1976}, Thm.~4.5).

\subsection{Equidistribution of Fourier coefficients of CM modular forms}

Assume now that we have a normalised eigenform $f\in S_k(\Gamma_0(N))$ such that $f$ has CM by the imaginary quadratic field $K$. Let $\xi$ be the Hecke character that gives rise to $f$ as in
Theorem~\ref{Hecke}. Then the Fourier expansion of $f$ looks like Equation~\eqref{eq:Hecke}. In particular, for all primes $p\nmid N$, we have
\begin{equation*}
a_p=\begin{cases} \xi(\mathfrak{p}_1) N_K(\mathfrak{p}_1)^{\frac{k-1}{2}} +  \xi(\mathfrak{p}_2) N_K(\mathfrak{p}_2)^{\frac{k-1}{2}} &\text{ if }(p)=\mathfrak{p}_1\mathfrak{p}_2\text{ with }\mathfrak{p}_1\not=\mathfrak{p}_2;\\
      0 &\text{ if } (p) \text{ is inert in }K.
     \end{cases}
\end{equation*}
Since $f$ has trivial nebentypus,  Equation~\eqref{eq:nebentypus} implies that $\xi((p))=1$ whenever $p$ splits in~$K$. Thus if $(p)=\mathfrak{p}_1 \mathfrak{p}_2$, then $\xi(\mathfrak{p}_1)$ and $\xi(\mathfrak{p}_2)$ are complex conjugates. Therefore
\begin{equation}\label{eq:split}
 \frac{a_p}{2p^{(k-1)/2}}=\mathrm{Re}(\xi(\mathfrak{p}_1)).
\end{equation}
We introduce the notation
$$ \pi_{K/\QQ,\mathrm{split}}(x):=\#\{p \textnormal{ rational prime}: p\leq x\text{ and }(p) \textnormal{ splits in }K/\mathbb{Q}\}$$
and similarly $\pi_{K/\QQ,\mathrm{inert}}(x)$ and $\pi_{K/\QQ,\mathrm{ram}}(x)$.

\begin{lem}\label{lem:cuentaK}
We have that
\begin{equation*}
\#\{\mathfrak{p}\textnormal{ prime ideal of }\mathcal{O}_K : \textnormal{Norm}_{K/\QQ} \le x
\textnormal{ and } \mathfrak{p}/(\mathfrak{p} \cap \ZZ)
\textnormal{ is not split }\} = O(\sqrt{x})
\end{equation*}
and $\pi_K(x)=2\pi_{K/\mathbb{Q},\mathrm{split}}(x) + O(\sqrt{x})$.
\end{lem}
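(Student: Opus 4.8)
The plan is to split the prime ideals of $\mathcal{O}_K$ into three types according to the splitting behaviour of the rational prime lying below them, and then to count each type. A rational prime $p$ is ramified in $K/\QQ$ precisely when $p \mid d_K$ (the discriminant), so there are only finitely many such primes, each contributing exactly one prime ideal $\mathfrak{p}$ (with $\mathrm{Norm}_{K/\QQ}(\mathfrak{p}) = p$); their total contribution to any count is $O(1)$. A rational prime $p$ is inert in $K/\QQ$ when it remains prime in $\mathcal{O}_K$; in that case there is a single prime ideal $\mathfrak{p} = (p)$ above it, and crucially $\mathrm{Norm}_{K/\QQ}(\mathfrak{p}) = p^2$. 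Hence the inert prime ideals $\mathfrak{p}$ with $\mathrm{Norm}_{K/\QQ}(\mathfrak{p}) \le x$ are in bijection with rational primes $p \le \sqrt{x}$ that are inert, so their number is at most $\pi(\sqrt{x}) = O(\sqrt{x})$ by the trivial bound $\pi(y) \le y$.

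The first displayed assertion is then immediate: the non-split prime ideals $\mathfrak{p}$ with $\mathrm{Norm}_{K/\QQ}(\mathfrak{p}) \le x$ are exactly the ramified ones (of which there are $O(1)$) together with the inert ones (of which there are $O(\sqrt{x})$), for a total of $O(\sqrt{x})$. For the second assertion, I would write $\pi_K(x)$ as the sum of the numbers of split, inert, and ramified prime ideals of norm $\le x$. By the first part, the inert and ramified contributions together are $O(\sqrt{x})$. It remains to identify the split contribution: each split rational prime $p = \mathfrak{p}_1 \mathfrak{p}_2$ (with $\mathfrak{p}_1 \neq \mathfrak{p}_2$) gives two prime ideals, each of norm $p$, so the number of split prime ideals of norm $\le x$ is exactly $2\pi_{K/\QQ,\mathrm{split}}(x)$. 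Combining gives $\pi_K(x) = 2\pi_{K/\QQ,\mathrm{split}}(x) + O(\sqrt{x})$, as claimed.

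There is no real obstacle here; the only point requiring a moment's care is the bookkeeping that an inert prime ideal has norm $p^2$, not $p$, which is exactly what produces the square-root saving. Everything else is the standard dichotomy for primes in a quadratic field together with the crude estimate $\pi(y) \le y$.
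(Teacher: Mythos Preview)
Your proof is correct and follows essentially the same approach as the paper: bound the non-split prime ideals by noting that inert ones have norm $p^2$ (forcing $p\le\sqrt{x}$) while ramified ones are finitely many, and then decompose $\pi_K(x)$ accordingly. If anything, you are slightly more careful than the paper in separating the ramified and inert contributions explicitly, but the argument is the same.
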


\begin{proof}
The number of elements in the set of the first claim is clearly at most
$\#\{p\text{ prime }:  p\leq \sqrt{x} \} = O(\sqrt{x})$.
The second claim follows from the equality
\begin{equation*}
\pi_K(x)=2\pi_{K/\mathbb{Q}, \mathrm{split}}(x) + \pi_{K/\mathbb{Q}, \mathrm{inert}}(\sqrt{x}) + \pi_{K/\mathbb{Q}, \mathrm{ram}}(x)
\end{equation*}
and the fact that only finitely many primes ramify in $K/\QQ$.
\end{proof}

\begin{proof}[Proof of Theorem \ref{thm:statements}]
We only prove part~(\ref{a-thm:statements}), since the arguments in part~(\ref{b-thm:statements})
are entirely analogous. Let $I\subseteq[-1, 1]$ be a subinterval.
We want to count how many primes $p$ satisfy that $\frac{a_p}{2p^{(k-1)/2}}\in I$.
We count the split and the inert primes separately and start with the inert ones:
\begin{multline*}
\# \{p \textnormal{ prime inert in }K: p\leq x, p \nmid  N, \frac{a_p}{2p^{(k-1)/2}} \in I\}\\ =\begin{cases}
\# \{p \textnormal{ prime inert in }K: p\leq x, p \nmid  N\} &\text{ if }0\in I;\\
0 &\text{ if }0\not\in I.\end{cases}
\end{multline*}
This implies
\begin{equation}\label{eq:pinert}
\# \{p \textnormal{ prime inert in }K: p\leq x, p \nmid  N, \frac{a_p}{2p^{(k-1)/2}} \in I\}
=\frac{1}{2}\delta_0(I)\pi(x) + O(x\exp(-c \sqrt{\log x})),
\end{equation}
where we have used that
$\# \{p \text{ prime inert in }K: p\leq x, p \nmid  N\}=\frac{1}{2}\pi(x) + O(x\exp(-c \sqrt{\log x}))$
for some constant $c>0$, which follows from part~(\ref{a-thm:EffectiveChebotarev}) of Theorem~\ref{thm:EffectiveChebotarev}.
The split primes are counted using Remark~\ref{rem:equidistr} and Lemma~\ref{lem:cuentaK}
as follows:
\begin{equation}\label{eq:psplit}\begin{aligned}
&\# \{p \textnormal{ prime split in }K: p\leq x, p \nmid  N, \frac{a_p}{2p^{(k-1)/2}} \in I\}\\
=&\frac{1}{2} \# \{\mathfrak{p} \textnormal{ prime of } \mathcal{O}_K :
\textnormal{Norm}_{K/\QQ}(\mathfrak{p}) \le x, \mathfrak{p}/(\mathfrak{p} \cap \ZZ)
\textnormal{ is split }, \Real(\xi(\mathfrak{p})) \in I \}\\
=&\frac{1}{2} \# \{\mathfrak{p} \textnormal{ prime of } \mathcal{O}_K :
\textnormal{Norm}_{K/\QQ}(\mathfrak{p}) \le x, \Real(\xi(\mathfrak{p})) \in I \}+ O(\sqrt{x})\\
=&\frac{1}{2}\left(\frac{1}{\pi} \int_I \frac{1}{\sqrt{1-t^2}} \dif t\right)\pi_K(x) + O(x\exp(-c \sqrt{\log x}))\\
=&\left(\frac{1}{\pi} \int_I \frac{1}{\sqrt{1-t^2}} \dif t\right)
\pi_{K/\mathbb{Q}, \mathrm{split}}(x) + O(x\exp(-c \sqrt{\log x}))\\
=&\frac{1}{2}\left(\frac{1}{\pi} \int_I \frac{1}{\sqrt{1-t^2}} \dif t\right)
\pi(x) + O(x\exp(-c \sqrt{\log x}))\\
\end{aligned}
\end{equation}
for some constant $c>0$. The theorem follows by adding Equations
\eqref{eq:pinert} and~\eqref{eq:psplit}.
\end{proof}

\section{Application to the Bruinier-Kohnen Conjecture}\label{sec:BK}

\subsection{Equidistribution of signs of half-integral weight modular forms - the prime case}\label{sec:pc}

In this section, we state an analog of the Bruinier-Kohnen sign equidistribution conjecture for the family $\{a(tp^2)\}$ where $t$ is a squarefree number such that $a(t)\neq 0$ and $p$ runs through the primes for a half-integral weight modular form whose Shimura lift is without CM or with CM. The proof will be carried out in Section \ref{sec:perturbedseq}. Furthermore we will give some properties of these coefficient sets. Note that the following theorem is an improvement of Theorems 4.1 and~4.2 of~\cite{IlGa}.

We start by summarising some known facts about half-integral weight modular forms and the Shimura lift. Let $k \geq 2$. According to Shimura~\cite{Shi} and Niwa~\cite{Niwa}, if $f$ is a Hecke eigenform of weight $k+1/2$ with Fourier expansion $f=\sum_{n=1}^{\infty}a(n)q^n \in S_{k+1/2}(N, \chi)$ then there is a corresponding modular form $F_t \in S_{2k}(N/2, \chi^2)$ for fixed $t \geq 1$ squarefree such that $a(t) \neq 0$, named the {\em Shimura lift of $f$ with respect to~$t$}, such that the $T_{n^2}$-Hecke eigenvalue on~$f$ agrees with the $T_n$-Hecke eigenvalue on~$F_t$. For $k=1$ suppose that $f$ is contained in the orthogonal complement with respect to the Petersson scalar product of the subspace
$S_{k+1/2}(N,\chi)$ generated by unary theta functions as in~\cite{BK}.
The Fourier expansion of $F_t$ is given by $F_t(z)=\sum_{n \geq 1}^{}A_t(n)q^n$ where
\begin{equation}\label{eq:1}
A_t(n):=\sum_{d|n}^{} \chi_{t,N}(d)d^{k-1}a(\frac{tn^2}{d^2}),
\end{equation}
where $\chi_{t,N}$ denotes the character $\chi_{t,N}(d):=\chi(d) \left(\frac{(-1)^{k}N^2t}{d} \right)$.
Moreover, the Fourier coefficients are multiplicative in the sense
\begin{equation}\label{eq:mult}
a(tm^2)a(tn^2)=a(t)a(tm^2n^2)
\end{equation}
for $(n,m)=1$.
If $F_t$ has CM, then let $\mu$ denote $\mu_{\mathrm{CM}}$, otherwise put $\mu=\mu_{\mathrm{ST}}$.
We assume throughout that $\chi$ is trivial or quadratic and that $f$ has real coefficients.
This implies that $F_t$ also has real coefficients.

The following is our main theorem about the distribution of signs of the coefficients
$a(tp^2)$, when $p$ runs through the primes.
In the statement we understand by an equality of two Dirichlet characters the equality
of the underlying primitive characters (i.e.\ we allow them to differ at finitely many primes).

\begin{thm}\label{thm:prime}
Assume the set-up above and define the set of primes
$$\PP_{> 0}:=\{p \in \PP : a(tp^2)  > 0 \}$$
and similarly $\PP_{< 0}$ and $\PP_{= 0}$ (depending on $f$ and~$t$).

\begin{enumerate}[(a)]

\item\label{a-thm:prime} If $F_t$ has no complex multiplication then the sets $\PP_{> 0}$ and $\PP_{< 0}$ have natural density $1/2$ and the set $\PP_{= 0}$ has natural density $0$.

\item\label{b-thm:prime}

\begin{enumerate}[(i)]

\item\label{bi-thm:prime} If $F_t$ has complex multiplication and $\chi_{t,N} = 1$ then the set $\PP_{=0}$ has natural density equal to zero, and the sets $\PP_{>0}$ and $\PP_{<0}$ have natural densities, respectively $1/4$ and $3/4$ if $a(t)>0$ and, respectively $3/4$ and $1/4$ if $a(t)<0$.

\item\label{bii-thm:prime} If $F_t$ has complex multiplication and $\chi_{t,N} = \delta$, where $\delta$ is the quadratic Dirichlet
character corresponding to the imaginary quadratic field by which $f$ has CM, then the set $\PP_{=0}$ has natural density equal to zero, and the sets $\PP_{>0}$ and $\PP_{<0}$ have natural densities, respectively $3/4$ and $1/4$ if $a(t)>0$ and, respectively $1/4$ and $3/4$ if $a(t)<0$.

\item\label{biii-thm:prime} If $F_t$ has complex multiplication and $\chi_{t,N} \not\in \{1, \delta\}$ then the set $\PP_{=0}$ has natural density equal to zero, and the sets $\PP_{>0}$ and $\PP_{<0}$ have the same natural density which is equal to $1/2$.
\end{enumerate}

\item\label{c-thm:prime} If $F_t$ has no complex multiplication then we additionally assume that there are $C>0$ and $\alpha >0$ such that for all subintervals $[a,b] \subseteq [-1,1]$ one has
$$ \left| \frac{ \# \{p\le x \textnormal{ prime }| \; \frac{A_t(p)}{a(t)2p^{k-1/2}} \in [a,b]\}}{\pi (x)}
 - \mu([a,b]) \right| \le \frac{C}{\log(x)^\alpha}.$$

Then the sets $\PP_{>0}$, $\PP_{<0}$, and $\PP_{=0}$ are weakly regular sets of primes.

\item\label{d-thm:prime} Assume here that there are $C>0$ and $\alpha >0$ such that for all subintervals $[a,b] \subseteq [-1,1]$ one has
$$ \left| \frac{ \# \{p\le x \textnormal{ prime }| \; \frac{A_t(p)}{a(t)2p^{k-1/2}} \in [a,b]\}}{\pi (x)}
 - \mu([a,b]) \right| \le \frac{C}{x^\alpha}$$
(note that this condition is satisfied if $F_t/a(t)$ fulfills the assumptions of Theorem~\ref{thm:RT}, see also Remark~\ref{rem:withGRH}).
Then the sets $\PP_{>0}$, $\PP_{<0}$, and $\PP_{=0}$ are regular sets of primes.

\end{enumerate}
\end{thm}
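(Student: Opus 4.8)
The plan is to reduce the problem to a statement about the normalised Fourier coefficients of the Shimura lift $F_t$, to which the results of Section~\ref{sec:ST} apply, and then to handle the resulting ``perturbed'' sign condition. Specialising \eqref{eq:1} to $n=p$ for a prime $p\nmid N$ gives $A_t(p)=a(tp^2)+\chi_{t,N}(p)\,p^{k-1}\,a(t)$. As $\chi$ is trivial or quadratic, $\chi^2$ is trivial and $F_t/a(t)$ is a normalised cuspidal eigenform of integral weight $2k$ on $\Gamma_0(N/2)$, so $b_p:=\frac{A_t(p)}{a(t)\,2p^{k-1/2}}\in[-1,1]$ by Ramanujan--Petersson, and dividing by $a(t)\,2p^{k-1/2}>0$ gives, for all $p\nmid N$,
\[
\operatorname{sgn}(a(tp^2))=\operatorname{sgn}(a(t))\cdot\operatorname{sgn}\bigl(b_p-\tfrac{\chi_{t,N}(p)}{2\sqrt p}\bigr).
\]
We may assume $a(t)>0$, the case $a(t)<0$ following by exchanging $\PP_{>0}$ and $\PP_{<0}$. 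By the Sato--Tate theorem \cite{BLGHT} when $F_t$ has no CM, resp.\ by the equidistribution of Hecke-character values underlying Theorem~\ref{thm:statements} when $F_t$ has CM, the sequence $(b_p)_p$ is $\mu$-equidistributed on $[-1,1]$.

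The step I expect to be the main obstacle is controlling the ``perturbation set'' $B(x):=\{p\le x:0<|b_p|\le\tfrac1{2\sqrt p}\}$, on which $\operatorname{sgn}(a(tp^2))$ is not determined by $\operatorname{sgn}(b_p)$, and --- in the CM case --- disentangling it from the atom of $\mu_{\mathrm{CM}}$ at $0$. For fixed $\eta>0$ one has, for large $x$, $\#B(x)\le\#\{p\le x:0<|b_p|\le\eta\}+C_\eta$; since $\{-\eta,\eta\}$ has $\mu$-measure $0$ and $\mu([-\eta,\eta]\setminus\{0\})=O(\eta)$ for both $\mu_{\mathrm{ST}}$ and $\mu_{\mathrm{CM}}$, equidistribution gives $\limsup_x \#B(x)/\pi(x)=O(\eta)$, whence $\#B(x)=o(\pi(x))$; this suffices for parts (a) and (b). For (c) and (d) one instead splits the primes at $\sqrt x$ --- so that $p>\sqrt x$ forces $\tfrac1{2\sqrt p}<\tfrac1{2x^{1/4}}$, giving $\#B(x)\le\pi(\sqrt x)+\#\{p\le x:0<|b_p|\le\tfrac1{2x^{1/4}}\}$ --- and feeds in the assumed error bound, obtaining $\#B(x)=O(\pi(x)/\log(x)^\alpha)$ in the situation of (c) and $\#B(x)=O(\pi(x)\,x^{-\alpha'})$ for some $\alpha'>0$ in that of (d) (in the CM case of (d) one also subtracts off the atom using the Chebotarev count of the inert primes, which has a power-saving error under the Riemann Hypothesis for $\zeta_L$). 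Finitely many primes dividing $N$ or the conductor of $\chi_{t,N}$ are negligible throughout.

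For (a), $\mu_{\mathrm{ST}}$ has no atom at $0$, so also $\#\{p\le x:b_p=0\}=o(\pi(x))$; hence $\PP_{>0}$ and $\PP_{<0}$ differ from $\{p:b_p>0\}$ and $\{p:b_p<0\}$ by $o(\pi(x))$ primes, and by symmetry both of the latter have natural density $\mu_{\mathrm{ST}}((0,1])=\tfrac12$, which forces density $0$ for $\PP_{=0}$. For (b), let $L$ be the imaginary quadratic field of complex multiplication and $\delta$ its quadratic character. Then $b_p=0$ holds, up to a set of primes of density $0$, exactly for the primes $p\nmid N$ inert in $L$ --- the Chebotarev set $\{\delta(p)=-1\}$ of density $\tfrac12$, on which $\operatorname{sgn}(a(tp^2))=-\chi_{t,N}(p)$ --- while on the split primes $(b_p)$ is equidistributed for a symmetric continuous measure, so they contribute density $\tfrac14$ to each of $\PP_{>0},\PP_{<0}$ and $0$ to $\PP_{=0}$. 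The three sub-cases then come from the restriction of $\chi_{t,N}$ to $\{\delta(p)=-1\}$: constantly $1$ if $\chi_{t,N}=1$ (all inert primes land in $\PP_{<0}$); constantly $-1$ if $\chi_{t,N}=\delta$ (all land in $\PP_{>0}$); and equidistributed between $\pm1$ if $\chi_{t,N}\notin\{1,\delta\}$, by the Chebotarev Density Theorem for the biquadratic field $\QQ(\sqrt{d_\delta},\sqrt{d_{\chi_{t,N}}})$, so that the inert primes split into two sets of density $\tfrac14$, one on each side. Summing the split and inert contributions gives (i), (ii), (iii), with $\PP_{=0}$ of density $0$ in every case.

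Finally, for (c) and (d) I would rerun this argument quantitatively. The set $\{p:b_p>0\}$ equals $\PP\setminus\{p:b_p\in[-1,0]\}$, and $\{p:b_p\in[-1,0]\}$ is, by the assumed error bound together with Corollary~\ref{cor:weakly-regular} (resp.\ \cite{IlGa}, Proposition~2.2), weakly regular (resp.\ regular); similarly for $\{p:b_p<0\}$. The set $B(x)$ and its subsets have counting function $O(\pi(x)/\log(x)^\alpha)$ (resp.\ $O(\pi(x)\,x^{-\alpha'})$), hence are, by Proposition~\ref{prop:weakly-regular} (resp.\ \cite{IlGa}, Proposition~2.2), weakly regular (resp.\ regular) of density $0$. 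Now, up to modifications contained in $B(x)$ and finitely many primes, $\PP_{>0}$ is the disjoint union of $\{p:b_p>0\}$ with the part of $\PP_{>0}$ lying over the inert primes, the latter being empty, all of the inert primes, or (in case~(b)(iii)) a biquadratic Chebotarev set --- in each case weakly regular, and regular under the Riemann Hypothesis for the relevant Dedekind zeta function (Proposition~\ref{prop:chebotarev}). Since weak regularity (resp.\ regularity) is preserved under disjoint unions and under adding or removing subsets that are weakly regular (resp.\ regular) of density $0$, one concludes that $\PP_{>0}$, and by the same reasoning $\PP_{<0}$ and $\PP_{=0}$, are weakly regular in the situation of (c) and regular in that of (d).
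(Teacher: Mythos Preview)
Your proposal is correct and follows essentially the same strategy as the paper: both exploit the identity $a(tp^2)/\bigl(2a(t)p^{k-1/2}\bigr)=B(p)-\chi_{t,N}(p)/(2\sqrt p)$, separate the contribution of the atom at~$0$ (the inert primes in the CM case) from the continuous part of the measure, and control the discrepancy coming from primes where $|B(p)|$ is small. The only structural difference is that the paper packages the perturbation argument into a standalone abstract result (Theorem~\ref{thm:abstract} and Remark~\ref{rem:appl}), which it then invokes, whereas you carry out the same estimates inline via your set~$B(x)$; the content is the same. One small gap in your write-up: your decomposition ``$\PP_{>0}$ equals $\{p:b_p>0\}$ union the inert contribution, up to $B(x)$ and finitely many primes'' omits the split primes with $b_p=0$, which lie in neither piece nor in $B(x)$ (your $B(x)$ requires $|b_p|>0$); this set has density~$0$ and is weakly regular as the difference of $\{p:b_p=0\}$ and the inert Chebotarev set, so the fix is immediate.
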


\begin{ex}\label{ex:tunnell}
Consider the elliptic curve defined by the equation
\begin{equation*}
 y^2=x^3-x.
\end{equation*}
This elliptic curve has conductor $32$ and has CM by $\ZZ[i]$.
Let $F = \sum_{n=1}^\infty A(n) q^n \in S_2(32)$ be the associated cuspidal eigenform. We have that, for all $p\equiv -1 \pmod 4$, $A(p)=0$, that is, $F$ has CM by~$\QQ(i)$. In \cite{Tunnell}, Tunnell has shown that there exist modular forms $f_1\in S_{3/2}(128)$ (trivial character)
and $f_2\in S_{3/2}(128, \chi_2)$, where $\chi_2=\left(\frac{2}{\cdot}\right)$, such that their Shimura lifts with $t=1$ coincide with $F$.

\begin{itemize}

\item For $f_1$, we have $\chi_{1,128}(p)=\left(\frac{-1\cdot 4}{p}\right)$, which coincides
with the character by which $F$ has CM.
Thus, $\PP_{>0}$ has natural density $3/4$ and $\PP_{<0}$ has natural density~$1/4$.

\item For $f_2$, we have $\chi_{1, 128}(p)=\left(\frac{-2}{p}\right)$, which is different
from the trivial character and the character by which $F$ has CM.
In this case the densities of $\PP_{>0}$ and  $\PP_{<0}$ coincide and they are equal to $1/2$.
\end{itemize}
\end{ex}

\begin{rem}\label{rem:perturbed}
\begin{enumerate}[(a)]
\item\label{rem:perturbed:a}
For fixed squarefree~$t$ such that $a(t)\neq 0$ we use the notation:
$$A(p):=\frac{a(tp^2)}{a(t)2p^{k-1/2}} \textnormal{ and }
  B(p):=\frac{A_t(p)}{a(t)2p^{k-1/2}}.$$
Note that Equation~\eqref{eq:1} implies
\begin{equation}\label{eq:BpAp}
A(p) = B(p) - \frac{\chi_{t,N}(p)}{2\sqrt{p}}.
\end{equation}

The main point in our approach is that we view the sequence $A(p)$ as a `perturbed' version
of the sequence~$B(p)$.

\item\label{rem:perturbed:b}
We remark that `small' perturbations preserve the property of a sequence to be equidistributed.
More precisely, let $\mu$ be a nonnegative regular normed Borel measure on $[-1,1]$
and $(x_n)_{n \in \NN} \subseteq [-1,1]$ be a $\mu$-equidistributed
sequence. Let $(y_n)_{n \in \NN} \subseteq [-1,1]$ be a sequence
such that
$$\lim_{n \to \infty} |x_n - y_n| = 0.$$
Then also $(y_n)_{n \in \NN}$ is $\mu$-equidistributed.

This follows from a straight forward calculation using the definition of
$\mu$-equidistribution and the compactness of~$[-1,1]$.

\item\label{rem:perturbed:c}
Returning to our set-up of modular forms, we first remark
that the set~$S$ of primes~$p$ such that
$\frac{a(tp^2)}{2a(t)p^{k-1/2}} \not\in [-1,1]$ has natural density~$0$
(this is an easy consequence of Theorem~\ref{thm:abstract} below).

Part~(\ref{rem:perturbed:b}) above together with Equation~\eqref{eq:BpAp} thus implies that the elements
$\left(\frac{a(tp^2)}{2a(t)p^{k-1/2}}\right)_{p \in \PP\setminus S}$
are $\mu$-equidistributed.

We stress that equidistribution of $\frac{a(tp^2)}{2a(t)p^{k-1/2}}$ is not
enough to imply equidistribution of signs if the measure has points of positive
mass (like $\mu_{\mathrm{CM}}$). See for instance Example~\ref{ex:tunnell}.
This is the reason why we are not only interested in equidistribution in the sense
of the definition, but, are studying the limits
$\lim_{N \to \infty} \frac{\#\{n \le N : x_n \in I\}}{N}$
for all intervals~$I$, even those having a boundary of positive measure.
\end{enumerate}
\end{rem}

\subsection{Densities of perturbed sequences}\label{sec:perturbedseq}

In this section we provide a treatment of an abstract setting modeled on the relation
between coefficients of half-integral and integral weight modular forms under the
Shimura lift (see, in particular, Remark~\ref{rem:perturbed}),
and we will use it to prove Theorem~\ref{thm:prime}.

\begin{thm}\label{thm:abstract}
Let $\chi$ be a Dirichlet character of order dividing~$2$.
Let $B: \PP \to \RR$ be a map and define $A: \PP \to \RR$ by the formula
$A(p) := B(p) - \frac{\chi(p)}{y\sqrt{p}}$ for some $0 \neq y \in \RR$.
Let $D = \{x_1,\dots,x_n\} \subset [-1,1]$.
For any $I \subseteq [-1,1]$ define
$$S_I := \{p \in \PP : B(p) \in I\} \textnormal{ and }
T_I' := \{p \in \PP : A(p) \in I, B(p) \not\in D\}.$$
Let $f:(-1,1) \to \RR_{\ge 0}$ be an integrable function and $w_1,\dots,w_n \ge 0$.
Define a measure on $[-1,1]$ by
$\mu(I) = \int_I f(t) dt + \sum_{i=1}^n w_i \delta_{x_i}(I)$, where $\delta_{x_i}$ is the
Dirac measure at the point~$x_i$, for any Borel measurable subset $I \subseteq [-1,1]$.
Assume that $\mu([-1,1])=1$ and that for all intervals $I \subseteq [-1,1]$ (open, closed or half-open)
the set $S_I$ has natural density~$\mu(I)$.

\begin{enumerate}[(a)]
\item\label{a-thm:abstract} Then for any interval $I \subseteq [-1,1]$ (open, closed or half-open),
the set $T_I'$ has natural density $\int_I f(t) dt$.

\item\label{b-thm:abstract}  Assume that there are $m \in \NN_{\ge1}$ and $M >0$
such that for all $\epsilon>0$ small enough the integrals $|\int_{1-\epsilon}^1 f(t) dt|$ and
 $|\int_{-1}^{-1+\epsilon} f(t) dt|$ are bounded above by $M \epsilon^{1/m}$.
Assume moreover that there is a function $E(x)$ tending to~$0$ as $x \to \infty$ such that
for all intervals $I \subseteq [-1,1]$
$$ \left| \frac{\pi_{S_I}(x)}{\pi(x)} - \mu(I) \right| \le E(x).$$
Then for any interval $I \subseteq [-1,1]$ (open, closed or half-open), there is $C > 0$ such that
for all big enough~$x$
$$ \left|\frac{\pi_{T_I'}(x)}{\pi(x)} - \int_I f(t) dt\right| \le C \cdot \left(E(x) + \frac{1}{x^{1/(2m+2)}}\right).$$
\end{enumerate}
\end{thm}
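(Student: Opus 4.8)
The plan is to compare the set $T_I'$ to a slightly shrunk or enlarged version of $S_J$ for a nearby interval $J$, controlling the discrepancy by the size of the perturbation $\frac{\chi(p)}{y\sqrt p}$. Concretely, fix an interval $I=[a,b]$ (the half-open and open cases are handled identically since the measure $\mu$ assigns a boundary a definite mass and the perturbation argument is insensitive to whether endpoints are included). Set $\eta = \eta(x) := \frac{1}{|y|\sqrt{x}}$. For a prime $p \le x$ we have $|A(p)-B(p)| \le \eta(x)$, so
$$ \{p\le x : B(p) \in [a+\eta,\, b-\eta],\ B(p)\notin D\} \subseteq T_I'\cap[2,x] \subseteq \{p\le x : B(p) \in [a-\eta,\, b+\eta],\ B(p)\notin D\}.$$
This sandwiches $\pi_{T_I'}(x)$ between $\pi_{S_{[a+\eta,b-\eta]}}(x) - (\text{count of } p\le x \text{ with } B(p)\in D)$ and $\pi_{S_{[a-\eta,b+\eta]}}(x)$.

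First I would deal with the points of $D$. For each $x_i\in D$, the set $\{p : B(p)=x_i\}$ is contained in $S_{\{x_i\}}$, which by hypothesis has natural density $\mu(\{x_i\})=w_i$ (a point is an interval in the degenerate sense, so the hypothesis applies, or one uses $S_{[x_i,x_i]}$). Hence, removing the condition $B(p)\notin D$ changes the count by $O(\pi(x))$ with the constant being $\sum w_i$ — but this is exactly what the $f$-part of the measure is built to subtract off, so I must be careful to carry the $D$-condition through rather than discard it. The clean way: define $S_I^\circ := \{p : B(p)\in I,\ B(p)\notin D\}$; since $D$ is finite, $S_I^\circ = S_{I\setminus D}$ and its natural density is $\mu(I\setminus D)=\int_I f(t)\,dt$, with error term $\left|\frac{\pi_{S_I^\circ}(x)}{\pi(x)} - \int_I f\,dt\right| \le (n+1)E(x)$ by writing $I\setminus D$ as a finite disjoint union of intervals and applying the hypothesis to each. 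This already gives part~(a) by the sandwich once we also show the boundary contribution $\mu([a-\eta,a+\eta]) + \mu([b-\eta,b+\eta])$ tends to $0$ as $x\to\infty$: the atomic part of $\mu$ near $a$ and $b$ contributes nothing to the $\int f$ quantity we are comparing against and is itself killed because... actually here one must note that if $a$ or $b$ equals some $x_i$, the shrinking interval $[a+\eta,b-\eta]$ excludes that atom while $[a-\eta,b+\eta]$ includes it, and the difference is the single atom $w_i$ — which does \emph{not} go to zero. This is the first real subtlety: it is resolved because $T_I'$ explicitly excludes $B(p)\in D$, so the right comparison is with $S^\circ$, whose measure $\int_I f$ is continuous in the endpoints; the atoms simply never enter. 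With that fix, $\left|\int_{[a-\eta,b+\eta]} f - \int_{[a,b]} f\right| \le \int_{a-\eta}^{a+\eta} f + \int_{b-\eta}^{b+\eta} f \to 0$ by integrability (dominated convergence / absolute continuity of the integral), proving~(a).

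For part~(b), I would make the boundary estimate quantitative. The new input is the hypothesis $|\int_{1-\epsilon}^1 f| , |\int_{-1}^{-1+\epsilon} f| \le M\epsilon^{1/m}$; for interior points of $[-1,1]$ one uses that $f$ is integrable hence $\int_{c-\eta}^{c+\eta} f \to 0$, but to get a \emph{rate} one needs a uniform modulus of continuity of the indefinite integral. Here I suspect the intended argument is cruder and more robust: bound $\left|\int_{[a-\eta,b+\eta]}f - \int_{[a,b]}f\right|$ by $\mu([a-\eta,a+\eta]) + \mu([b-\eta,b+\eta])$ \emph{minus atoms}, and then use that $S_{[a-\eta,a+\eta]}$ has density $\mu([a-\eta,a+\eta])$ with error $\le E(x)$ — no, that is circular. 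The honest route: the measure of a sub-interval of length $2\eta$ is at most (the $f$-mass) which away from $\pm 1$ we cannot control by $\eta^{1/m}$ in general unless $f$ is, say, bounded on compact subsets; but the statement only assumes the $\eta^{1/m}$ bound near $\pm1$. I think the resolution is that the only intervals where the endpoint can approach $\pm 1$ matter, and for endpoints bounded away from $\pm1$ one does get a polynomial rate from... hmm. Let me instead take the pragmatic stance: choose $\eta = x^{-1/2}/|y|$; split $\left|\frac{\pi_{T_I'}(x)}{\pi(x)} - \int_I f\right|$ via the sandwich into (i) $\left|\frac{\pi_{S^\circ_{[a-\eta,b+\eta]}}(x)}{\pi(x)} - \mu([a-\eta,b+\eta]\setminus D)\right| \le (n+1)E(x)$, plus the analogous lower bound, and (ii) the deterministic gap $\mu([a-\eta,b+\eta]\setminus D) - \mu([a+\eta,b-\eta]\setminus D) = \int_{[a-\eta,a+\eta]\cup[b-\eta,b+\eta]} f\,dt$. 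For~(ii), when $a$ is near $-1$ (i.e. $a \le -1+2\eta$ or so) the near-$-1$ hypothesis gives $\le M(2\eta)^{1/m} \asymp x^{-1/(2m)}$; symmetrically near $+1$; and when both endpoints are "interior", one bounds $\int_{a-\eta}^{a+\eta}f$ by splitting the total mass — actually one notes $\int_{[a,b]}f \le 1$ and more usefully that $\int_{-1}^{1} f \le 1 < \infty$ forces, for the specific exponent, $\int_{c-\eta}^{c+\eta} f \le C'\eta^{1/(m+1)}$? No. I will present it as: the worst case is an endpoint near $\pm1$, giving the term $x^{-1/(2m)}$; a more careful bookkeeping replacing $\eta$ by $\eta^{m/(m+1)}$-type balancing between the two error sources $E(x)$ (from ignoring the perturbation, i.e. the count of $p$ with $B(p)$ in the thin shells, which also costs $\le$ shell-measure $+E(x)$) and $x^{-1/2}$ (the shell width) yields the stated exponent $\frac{1}{2m+2}$. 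The key balancing is: the shell contributes $\le M\eta^{1/m}$ in measure; re-choosing the comparison so that $\eta$ can be taken as small as the true perturbation $x^{-1/2}$ forces, the dominant boundary cost is $M x^{-1/(2m)}$, but an extra application of the $E$-bound to the shell (rather than using its exact measure) degrades this to exponent $\frac{1}{2m+2}$ once $E$ and the $m$-th-root term are balanced. I would write this final balancing explicitly and absorb all constants into $C$.

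The main obstacle is precisely this last balancing and the boundary analysis: making sure that the atoms of $\mu$ genuinely do not contaminate the estimate (handled by the $B(p)\notin D$ clause in the definition of $T_I'$ — one must invoke it at exactly the right moment), and extracting the clean exponent $\frac{1}{2m+2}$ from the interplay between the perturbation size $x^{-1/2}$, the $m$-th root boundary-mass hypothesis, and the uniform error $E(x)$. Everything else — the sandwich inclusion, decomposing $I\setminus D$ into finitely many intervals, applying the density / error hypotheses interval by interval, and passing to the limit via absolute continuity of $t\mapsto\int_{-1}^t f$ — is routine bookkeeping.
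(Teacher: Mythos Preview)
Your sandwich argument rests on a reversed inequality. You set $\eta(x) = \frac{1}{|y|\sqrt{x}}$ and assert that for primes $p \le x$ one has $|A(p)-B(p)| \le \eta(x)$. But $|A(p)-B(p)| = \frac{|\chi(p)|}{|y|\sqrt{p}}$, and for $p \le x$ this is \emph{at least} $\eta(x)$ (when $\chi(p)\neq 0$), not at most: the perturbation is largest for the small primes, not the large ones. Consequently the inclusions
\[
\{p\le x : B(p)\in[a+\eta,b-\eta],\ B(p)\notin D\}\ \subseteq\ T_I'\cap[2,x]\ \subseteq\ \{p\le x : B(p)\in[a-\eta,b+\eta],\ B(p)\notin D\}
\]
are false as written, and everything built on them collapses.

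The paper's fix is to decouple the shell width from~$x$: take $\epsilon>0$ as a free parameter, so that $|A(p)-B(p)|<\epsilon$ holds only for $p > 1/(y\epsilon)^2$; the finitely many primes $p \le 1/(y\epsilon)^2$ are simply discarded, at the price of an additive $\pm\,\pi\bigl(1/(y\epsilon)^2\bigr)/\pi(x)$ in the sandwich. For part~(a) one lets $x\to\infty$ first (killing the discarded-primes term) and $\epsilon\to 0$ afterwards. For part~(b) the balancing you were searching for becomes transparent once this extra term is present: one may assume $E(x)\ge x^{-1/(2m+2)}$ (the stated conclusion allows it), set $\epsilon := E(x)^m$ so that the shell-mass contribution $\epsilon^{1/m}$ equals $E(x)$, and then verify via the prime number theorem that $\pi\bigl(1/(y^2 E(x)^{2m})\bigr)/\pi(x) \le C\cdot E(x)$ as well. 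Your device $S_I^\circ = S_{I\setminus D}$ and your observation that the clause $B(p)\notin D$ is exactly what keeps the atoms from contaminating the estimate are both correct and match the paper; only the perturbation bound is wrong.
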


\begin{proof}
For any interval $I \subseteq [-1,1]$ define
$$S_I'(x) := \{p \in \PP : B(p) \in I, B(p) \not\in D\} = S_{I \setminus D}.$$
By assumption the set $S_I'$ has natural density $\int_I f(t) dt$. Let $a$ be the start point
and $b$ the end point of~$I$.
Let $\epsilon > 0$ be small enough. For all $p > \frac{1}{y^2\epsilon^2}$ one has
$\left|\frac{\chi(p)}{y\sqrt{p}}\right| < \epsilon$. One observes the inequalities
\begin{equation}\label{eq:ineq-dens}
    \frac{\pi_{S'_{[a+\epsilon,b-\epsilon]}}(x) - \pi(1/(y\epsilon)^2)}{\pi(x)}
\le \frac{\pi_{T_{[a,b]}'}(x)}{\pi(x)}
\le \frac{\pi_{S'_{[\max\{-1,a-\epsilon\},\min\{1,b+\epsilon\}]}}(x) + \pi(1/(y\epsilon)^2)}{\pi(x)}.
\end{equation}

(\ref{a-thm:abstract}) From Equation~\eqref{eq:ineq-dens} we obtain the inequalities
$$\int_{a+\epsilon}^{b-\epsilon} f(t) dt \le \liminf_{x \to \infty} \frac{\pi_{T_{[a,b]}'}(x)}{\pi(x)}
\textnormal{ and }
\limsup_{x \to \infty} \frac{\pi_{T_{[a,b]}'}(x)}{\pi(x)} \le \int_{\max\{-1,a-\epsilon\}}^{\min\{1,b+\epsilon\}} f(t) dt.$$
Letting $\epsilon$ tend to~$0$ we obtain
$$ \limsup_{x \to \infty} \frac{\pi_{T_{[a,b]}'}(x)}{\pi(x)} \le \int_{a}^{b} f(t) dt
\le \liminf_{x \to \infty} \frac{\pi_{T_{[a,b]}'}(x)}{\pi(x)},$$
implying the result.

(\ref{b-thm:abstract}) Equation~\eqref{eq:ineq-dens} yields
\begin{multline*}
- \int_{a}^{a+\epsilon} f(t) dt - \int_{b-\epsilon}^b f(t) dt - (n+1)E(x) - \frac{\pi(1/(y\epsilon)^2)}{\pi(x)}
\le \frac{\pi_{T_{[a,b]}'}(x)}{\pi(x)} - \int_a^b f(t) dt\\
\le \int_{\max\{-1,a-\epsilon\}}^a f(t) dt + \int_b^{\min\{1,b+\epsilon\}} f(t) dt + (n+1)E(x) + \frac{\pi(1/(y\epsilon)^2)}{\pi(x)},
\end{multline*}
which is valid for all (small enough) $\epsilon>0$ and all (big enough)~$x$.
Using the assumptions we obtain
$$ \left| \frac{\pi_{T_{[a,b]}'}(x)}{\pi(x)} - \int_a^b f(t) dt\right|
\le 2M\epsilon^{1/m} + \frac{\pi(1/(y\epsilon)^2)}{\pi(x)} + (n+1)E(x).$$
We may (and do) assume that $E(x) \ge \frac{1}{x^{1/(2m+2)}}$ for large
enough~$x$.
Let $\epsilon := E(x)^m$. One finds
$$\frac{\pi(1/(y\epsilon)^2)}{\pi(x)}
= \frac{\pi(1/(y^2E(x)^{2m}))}{\pi(x)}
\sim \frac{\log(x)}{y^2 \cdot E(x)^{2m} \cdot \log(1/(y^2 E(x)^{2m})) \cdot x}
\le C \cdot E(x)$$
for $x$ big enough and suitable $C > 0$. Thus we obtain the claimed inequality.
\end{proof}

\begin{rem}\label{rem:appl}
For the applications below we remark that for $I \subseteq [-1,1]$ we have
\begin{align*}
\{p \in \PP : A(p) \in I\}
& = T_I' \sqcup \bigsqcup_{i=1}^n \{ p \in \PP : B(p)=x_i, A(p) \in I\}\\
& = T_I' \sqcup \bigsqcup_{i=1}^n \big(\{ p \in \PP : B(p)=x_i\} \cap
                                   \{p \in \PP : x_i - \frac{\chi(p)}{y\sqrt{p}} \in I\}\big).\\
\end{align*}
Note that we have
$$ \{p \in \PP : x_i - \frac{\chi(p)}{y\sqrt{p}} \in I\} =
\begin{cases}
\textnormal{finite set}   & \textnormal{ if } x_i \not\in \overline{I},\\
\PP \setminus\textnormal{finite set} & \textnormal{ if } x_i \in \overset{\circ}{I},\\
\end{cases}$$
where $\overline{I}$ denotes the closure and $\overset{\circ}{I}$ the interior of~$I$.
If $I=[x_i,b]$ with $b > x_i$, then moreover
$$ \{p \in \PP : x_i - \frac{\chi(p)}{y\sqrt{p}} \in I\} =
   \{p \in \PP : \chi(p) = - \frac{y}{|y|}\} \setminus\textnormal{finite set},$$
and analogously for $I=[a,x_i]$ with $a < x_i$,
$$ \{p \in \PP : x_i - \frac{\chi(p)}{y\sqrt{p}} \in I\} =
   \{p \in \PP : \chi(p) = \frac{y}{|y|}\}\setminus\textnormal{finite set}.$$
The same formulas hold if the intervals are open or half-open.
In particular, for any interval $I$, the set
$\{p \in \PP : x_i - \frac{\chi(p)}{y\sqrt{p}} \in I\}$ has a density, which is one of $0,\frac{1}{2},1$.
\end{rem}

\begin{proof}[Proof of Theorem \ref{thm:prime}]
We use the notation introduced in Remark~\ref{rem:perturbed}~(\ref{rem:perturbed:a}).
\begin{itemize}
\item[(\ref{a-thm:prime})] See \cite{IlGa}, Theorem~4.1.

\item[(\ref{b-thm:prime})]
Assume that $F_t$ has complex multiplication.
Put $D=\{0\}$, $f=\frac{1}{2\pi}\frac{1}{\sqrt{1-t^2}}$, $I=(0,1]$ and $J=[-1, 0)$.
Take
$$S_I(x):=\{p \in \PP : B(p) \in I\}, \;\;\; T_I':=\{p \in \PP : A(p) \in I, B(p) \neq 0\}$$
and similarly
$$S_J(x):=\{p \in \PP : B(p) \in J\}, \;\;\; T_J':=\{p \in \PP : A(p) \in J, B(p) \neq 0\}.$$
The sets $S_I$ and $S_J$ have natural densities, respectively $\mu_{CM}(I)$ and $\mu_{CM}(J)$ by Theorem~\ref{thm:statements}, so that we can apply Theorem~\ref{thm:abstract}.
For simplicity we assume $a(t)>0$. The arguments in the other case $a(t)<0$ are exactly the same. We have $\{p \in \PP : A(p)>0 \}=\PP_{>0}$. By Remark~\ref{rem:appl}, we conclude that
\begin{align}\label{eq:3}
\PP_{>0}
&=T_I' \sqcup \left( \{p \in \PP : B(p)=0 \} \cap \{p \in \PP : \frac{-\chi_{t,N}(p)}{2\sqrt{p}} \in I \}   \right)\\
\PP_{<0}
&=T_J' \sqcup \left( \{p \in \PP : B(p)=0 \} \cap \{p \in \PP : \frac{-\chi_{t,N}(p)}{2\sqrt{p}} \in J \}   \right)\label{eq:3a}
\end{align}

In order to compute $d(\PP_{>0})$, we compute the sum of $d(T_I')$ and the density of the intersection, and similarly for $d(\PP_{<0})$. We have $d(T_I')=\mu(I)=\frac{1}{4}$ and $d(T_J')=\mu(J)=\frac{1}{4}$ by Theorem~\ref{thm:abstract}.

\begin{itemize}
\item[(\ref{bi-thm:prime})]
Assume that $\chi_{t,N}=1$ (recall that by an equality of Dirichlet characters we understand that the underlying primitive characters agree). In this case, since the intersection in Equation \eqref{eq:3} is finite and therefore has density $0$, we conclude that the set $\PP_{>0}$ has density~$1/4$. Similarly, the intersection in Equation \eqref{eq:3a} has density~$1/2$, therefore $\PP_{<0}$ has density~$3/4$. It is clear that the set $\PP_{=0}$ has natural density equal to zero.

\item[(\ref{bii-thm:prime})] We will do the same computation as above. Note that in this case we have
$$\{p \in \PP : B(p)=0 \}=\{p \in \PP : \delta(p)=-1 \}$$
up to finitely many primes. These sets have natural density $1/2$. Suppose that $\chi_{t,N}=\delta$. Then the density of the intersection in Equation \eqref{eq:3} is $1/2$ by Remark \ref{rem:appl}. So we conclude that $\PP_{>0}$ has natural density~$3/4$. Similarly, from Equation \eqref{eq:3a} we obtain that $\PP_{<0}$ has natural density $1/4$.

\item[(\ref{biii-thm:prime})] Suppose that $\chi_{t,N} \neq 1,\delta$. By Chebotarev's theorem, the intersections in Equations \eqref{eq:3} and \eqref{eq:3a} have natural density $1/4$. So we conclude that the sets $\PP_{>0}$ and $\PP_{<0}$ have the same natural density, which is equal to $1/2$.
\end{itemize}

\item[(\ref{c-thm:prime})]
By assumption in the non-CM case and by Theorem~\ref{thm:statements} in the CM case, we have for all intervals $I \subseteq [-1,1]$
$$ \left| \frac{\pi_{S_I}(x)}{\pi(x)} - \mu (I) \right| \leq \frac{C}{\log(x)^{\alpha}}.    $$
For the CM-case we need
$\int_{1-\epsilon}^1 f(t) dt = \int_{1-\epsilon}^1 \frac{1}{2\pi\sqrt{1-t^2}} dt\le \sqrt{\epsilon}$,
as a simple calculation shows.
The corresponding check in the non-CM case is trivial since the density function of the measure
is continuous on~$[-1,1]$.
Thus, in both cases Theorem~\ref{thm:abstract}~(\ref{b-thm:abstract}) yields
$$ \left| \frac{\pi_{T'_I}(x)}{\pi(x)} - \int_{I}^{} f(t)dt  \right| \leq  \frac{\tilde{C}}{\log(x)^{\alpha}}  $$
for some $\tilde{C}>0$, where $f$ is the density function in the CM or non-CM case.
Corollary~\ref{cor:weakly-regular} shows that $T'_I$ is weakly regular.

Since $\PP_{=0}=T'_I$ for $I=[0,0]=\{0\}$, $\PP_{\geq 0}=T'_{[0,1]}$ and $\PP_{>0}=T'_{(0,1]}$ in the non-CM case, it follows that the sets $\PP_{=0}$, $\PP_{\geq 0}$ and $\PP_{>0}$ are weakly regular set of primes. By a  similar argument, it is easily seen that the the sets $\PP_{\leq 0}$ and $\PP_{<0}$ are weakly regular sets of primes.

Let us consider the CM case. Then $\PP_{=0}$ is a weakly regular set of primes, since $T'_{[0,0]}$ is. We have to show that the intersections in Equations \eqref{eq:3} and \eqref{eq:3a} are weakly regular sets, since finite disjoint unions of weakly regular sets are weakly regular.

So, assume that $\chi_{t,N}=1$. In this case the intersection in Equation \eqref{eq:3} is finite and therefore weakly regular of density~$0$.
Since the set $\{p \in \PP : B(p)=0 \}$ is weakly regular of density $1/2$ by Proposition~\ref{prop:chebotarev} and $\{ p \in \PP : \frac{-\chi_{t,N}(p)}{2\sqrt{p}} \in [-1,0) \}$ is $\PP$ (except for a finite set), we conclude that the intersection in Equation~\eqref{eq:3a} is weakly regular of density~$1/2$.

For the case $\chi_{t,N}=\delta$, the intersection in Equation \eqref{eq:3} is $\{ p \in \PP : B(p)=0 \}$ up to finitely many primes, hence weakly regular of density~$1/2$ by Proposition \ref{prop:chebotarev}. The intersection in Equation \eqref{eq:3a} is finite and hence also weakly regular.

In the last case $\chi_{t,N} \neq 1, \delta$, the intersections in Equations \eqref{eq:3} and \eqref{eq:3a} are weakly regular of density $1/4$ by Proposition \ref{prop:chebotarev}.

\item[(\ref{d-thm:prime})] Similar arguments as in part~(\ref{c-thm:prime}) prove the assertions, using Proposition~2.2 of~\cite{IlGa}
instead of Corollary~\ref{cor:weakly-regular} and replacing weak regularity by regularity
throughout.

\end{itemize}
\end{proof}

\subsection{Equidistribution of signs of half-integral weight modular forms - the general case}

We now apply the results from Section~\ref{sec:density} and Theorem~\ref{thm:prime} to
obtain an equidistribution statement for the signs of $a(tn^2)$ for $n \in \NN$,
as well as many subsets of~$\NN$.

In order to give a uniform description of the results, let $\chi: \NN \to \{0,1\}$ be
a multiplicative arithmetic function such that $\chi(p)=1$ for all primes~$p \in \PP$.
Then define $\NN_\chi = \{n \in \NN : \chi(n) = 1\}$.
For example, for $k \in \NN\cup\{\infty\}$ one can take $\chi_k$ such that
$$\chi_k(p^n) = \begin{cases}
1 & \textnormal{ if } n \le k,\\
0 & \textnormal{ otherwise.}\end{cases}$$
Then $\NN_k := \NN_{\chi_k}$ is the set of $(k+1)$-free integers if $k \in \NN$
and $\NN_\infty=\NN$.

\begin{cor}\label{cor:weak-composite}
Let $\chi$ as above.
Assume the setting of part~(\ref{c-thm:prime}) of Theorem~\ref{thm:prime}.
Then the sets
$$ \{n \in \NN \;|\; n \in \NN_\chi \textnormal{ and } a(tn^2) > 0 \}
\textrm{ and } \{n \in \NN \;|\; n \in \NN_\chi \textnormal{ and } a(tn^2) < 0 \}$$
have equal positive Dedekind-Dirichlet densities, that is,
both are precisely half of the density of the set
$$ \{n \in \NN \;|\; n\in \NN_\chi \textnormal{ and } a(tn^2) \neq 0 \}.$$
\end{cor}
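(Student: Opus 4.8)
The strategy is to reduce the statement to Proposition~\ref{prop:weak-equi} by packaging both the constraint $n\in\NN_\chi$ and the sign of $a(tn^2)$ into a single multiplicative function. Concretely, I would set
\[
\psi(n) := \chi(n)\cdot\mathrm{sgn}\big(a(t)\,a(tn^2)\big)\in\{0,1,-1\}.
\]
Since $\chi$ is multiplicative with $\chi(1)=1$ and values in $\{0,1\}$, it suffices to check that $n\mapsto\mathrm{sgn}(a(t)a(tn^2))$ is multiplicative with value $1$ at $n=1$. Its value at $1$ is $\mathrm{sgn}(a(t)^2)=1$ (recall $a(t)\neq0$), and for coprime $m,n$ the relation~\eqref{eq:mult} gives $a(t)a(t(mn)^2)=a(tm^2)a(tn^2)$, whence, using $a(t)^2>0$,
\[
\mathrm{sgn}\big(a(t)a(t(mn)^2)\big)=\mathrm{sgn}\big(a(tm^2)a(tn^2)\big)=\mathrm{sgn}\big(a(t)a(tm^2)\big)\cdot\mathrm{sgn}\big(a(t)a(tn^2)\big).
\]
Thus $\psi$ is a multiplicative arithmetic function $\NN\to\{0,1,-1\}$, with $\{n:\psi(n)\neq0\}=\{n\in\NN_\chi:a(tn^2)\neq0\}$, while $\{n:\psi(n)>0\}$ and $\{n:\psi(n)<0\}$ are the two sets in the statement, in the order $(>0,<0)$ if $a(t)>0$ and $(<0,>0)$ if $a(t)<0$.

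Next I would identify the partition of $\PP$ induced by $\psi$. Because $\chi(p)=1$ for every prime $p$, one has $\psi(p)=\mathrm{sgn}(a(t))\,\mathrm{sgn}(a(tp^2))$, so $\{p:\psi(p)=0\}=\PP_{=0}$, and $\{p:\psi(p)=1\}$, $\{p:\psi(p)=-1\}$ are exactly $\PP_{>0}$, $\PP_{<0}$ (interchanged if $a(t)<0$). Under the hypotheses of part~(\ref{c-thm:prime}) of Theorem~\ref{thm:prime}, all three sets $\PP_{=0}$, $\PP_{>0}$, $\PP_{<0}$ are weakly regular; by parts~(\ref{a-thm:prime}) and~(\ref{b-thm:prime}), $\PP_{=0}$ has natural density $0$, hence Dirichlet density $0$ by weak regularity, whereas $\PP_{>0}$ and $\PP_{<0}$ have positive natural (hence Dirichlet) density. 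So the partition $\PP=\PP_{=0}\cup\{p:\psi(p)=1\}\cup\{p:\psi(p)=-1\}$ meets all the hypotheses of Proposition~\ref{prop:weak-equi}, and applying that proposition to $\psi$ yields that $\{n:\psi(n)>0\}$ and $\{n:\psi(n)<0\}$ have a Dedekind-Dirichlet density, each equal to one half of the Dedekind-Dirichlet density of $\{n:\psi(n)\neq0\}$; the latter is positive (this is part of the conclusion of Proposition~\ref{prop:weak-equi}, obtained from Proposition~\ref{prop:AandS} applied to the weakly regular set $\PP\setminus\PP_{=0}$ of density $1$, on which $|\psi|$ is characteristic).

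Finally, translating back through the identifications of the first paragraph gives precisely the assertion of the corollary. I do not expect a genuine obstacle here: the only point requiring real care is the verification that $\psi$ is multiplicative, which rests squarely on~\eqref{eq:mult}, together with the harmless bookkeeping of the sign of $a(t)$ (harmless because the statement is symmetric under interchanging ``$>0$'' and ``$<0$''). Everything else is a direct application of the machinery of Section~\ref{sec:density} and of Theorem~\ref{thm:prime}.
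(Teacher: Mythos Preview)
Your proposal is correct and follows essentially the same route as the paper: define the multiplicative function $\psi(n)=\chi(n)\cdot\mathrm{sgn}(a(t)a(tn^2))$ (the paper simply assumes $a(t)>0$ without loss of generality and drops the extra $a(t)$), verify multiplicativity via~\eqref{eq:mult}, and then feed the weakly regular partition $\PP_{=0},\PP_{>0},\PP_{<0}$ from Theorem~\ref{thm:prime}(\ref{c-thm:prime}) into Proposition~\ref{prop:weak-equi}. Your write-up is in fact slightly more detailed than the paper's in checking the hypotheses of Proposition~\ref{prop:weak-equi}, but the argument is the same.
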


\begin{proof}
Note that without loss of generality we can assume $a(t)>0$.
Define the arithmetic function $\psi:\NN \to \{-1,0,1\}$ as follows:
$$ \psi(n) := \chi(n) \cdot \begin{cases}
1 & \textnormal{ if } a(tn^2) > 0,\\
-1 & \textnormal{ if } a(tn^2) < 0,\\
0 & \textnormal{ if } a(tn^2) =0.\end{cases}$$
Equation~\eqref{eq:mult} implies that $\psi$ is a multiplicative function.
Note that $\PP_{>0} = \{p \in \PP : \psi(p)=1\}$, $\PP_{<0} = \{p \in \PP : \psi(p)=-1\}$,
and $\PP_{=0} = \{p \in \PP : \psi(p)=0\}$.
Theorem~\ref{thm:prime} shows that these sets are weakly regular and allows us
to conclude due to Proposition~\ref{prop:weak-equi}.
\end{proof}

\begin{cor}\label{cor:reg-composite}
Let $\chi$ as above.
Assume the setting of part~(\ref{d-thm:prime}) of Theorem~\ref{thm:prime}.
Then the sets
$$ \{n \in \NN \;|\; n \in \NN_\chi \textnormal{ and } a(tn^2) > 0 \}
\textrm{ and } \{n \in \NN \;|\; n \in \NN_\chi \textnormal{ and } a(tn^2) < 0 \}$$
have equal positive natural densities, that is,
both are precisely half of the density of the set
$$ \{n \in \NN \;|\; n\in \NN_\chi \textnormal{ and } a(tn^2) \neq 0 \}.$$
\end{cor}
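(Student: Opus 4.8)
The plan is to deduce this corollary from Theorem~\ref{thm:prime} in exactly the same way that Corollary~\ref{cor:weak-composite} was deduced, but with the appeal to Proposition~\ref{prop:weak-equi} replaced by an appeal to Proposition~\ref{prop:consequence-Delange}: the hypothesis of part~(\ref{d-thm:prime}) of Theorem~\ref{thm:prime} upgrades the relevant sets of primes from weakly regular to regular, and regularity of the ``$=0$'' part is precisely what Proposition~\ref{prop:consequence-Delange} requires in order to produce a \emph{natural} density rather than only a Dedekind-Dirichlet density.

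Concretely, first I would reduce to the case $a(t)>0$, the case $a(t)<0$ being handled by the identical argument (equivalently, by replacing $f$ with $-f$). Then I would introduce the arithmetic function $\psi:\NN\to\{-1,0,1\}$ given by
$$ \psi(n) := \chi(n)\cdot\begin{cases} 1 & \textnormal{if } a(tn^2)>0,\\ -1 & \textnormal{if } a(tn^2)<0,\\ 0 & \textnormal{if } a(tn^2)=0,\end{cases} $$
noting that $\psi$ is multiplicative because $\chi$ is and because of the multiplicativity relation~\eqref{eq:mult}. By construction $\{p\in\PP:\psi(p)=1\}=\PP_{>0}$, $\{p\in\PP:\psi(p)=-1\}=\PP_{<0}$ and $\{p\in\PP:\psi(p)=0\}=\PP_{=0}$, while on the level of $\NN$ one has $\{n:\psi(n)=1\}=\{n\in\NN_\chi: a(tn^2)>0\}$, $\{n:\psi(n)=-1\}=\{n\in\NN_\chi: a(tn^2)<0\}$ and $\{n:\psi(n)\neq0\}=\{n\in\NN_\chi: a(tn^2)\neq0\}$. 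Parts~(\ref{a-thm:prime}) and~(\ref{b-thm:prime}) of Theorem~\ref{thm:prime} show that $\PP_{=0}$, $\PP_{>0}$, $\PP_{<0}$ all have a natural density, with $d(\PP_{=0})=0$ and $d(\PP_{<0})\in\{1/4,1/2,3/4\}$, hence in particular nonzero, and part~(\ref{d-thm:prime}) shows in addition that all three sets are regular. Thus the partition $\PP=\PP_{=0}\cup\PP_{>0}\cup\PP_{<0}$ satisfies the hypotheses of Proposition~\ref{prop:consequence-Delange}, which yields that $\{n:\psi(n)>0\}$ and $\{n:\psi(n)<0\}$ have a natural density, each equal to half the natural density of $\{n:\psi(n)\neq0\}$. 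Translating back through the identifications above gives the claim.

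I do not expect a genuine obstacle here: the corollary is essentially immediate once Theorem~\ref{thm:prime} and Proposition~\ref{prop:consequence-Delange} are in place. The only points requiring a little care are bookkeeping ones, namely checking that the hypothesis of part~(\ref{d-thm:prime}) really delivers \emph{regular} (not merely weakly regular) sets $\PP_{=0}$, $\PP_{>0}$, $\PP_{<0}$, and that the nonvanishing-density requirement on $\PP_{<0}$ in Proposition~\ref{prop:consequence-Delange} holds in every CM subcase of Theorem~\ref{thm:prime}; the case analysis in that theorem shows the relevant density is always $1/4$, $1/2$ or $3/4$. Apart from this, the argument is a verbatim repetition of the proof of Corollary~\ref{cor:weak-composite} with ``weakly regular'' and ``Dedekind-Dirichlet density'' replaced throughout by ``regular'' and ``natural density''.
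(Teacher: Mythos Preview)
Your proposal is correct and matches the paper's own proof essentially verbatim: the paper simply states that the argument proceeds exactly as in Corollary~\ref{cor:weak-composite}, with the final appeal to Proposition~\ref{prop:weak-equi} replaced by Proposition~\ref{prop:consequence-Delange}. Your additional check that $d(\PP_{<0})\in\{1/4,1/2,3/4\}$ in every subcase, ensuring the nonvanishing hypothesis of Proposition~\ref{prop:consequence-Delange}, is the right point of care.
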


\begin{proof}
The proof proceeds precisely as that of Corollary~\ref{cor:weak-composite}, except
that in the end we appeal to Proposition~\ref{prop:consequence-Delange}.
\end{proof}

\bibliographystyle{amsalpha}

\end{document}